\newcommand{\set}[1]{\ensuremath{\{#1\}}}
\newcommand{\abs}[1]{\ensuremath{|#1|}}
\newcommand{\diam}{\textnormal{diam}}
\newcommand{\inv}{\ensuremath{^{-1}}}
\newcommand{\Aut}{\textnormal{Aut}}
\newcommand{\sm}{\ensuremath{\setminus}}
\newcommand{\es}{\ensuremath{\emptyset}}
\newcommand{\sub}{\subseteq}
\newcommand{\seq}[3]{(#1_#2)_{#2\in #3}}
\newcommand{\sequ}[1]{(#1_i)_{i\in{\mathbb N}}}
\theoremstyle{definition}
\newtheorem{Def}{Definition}[section]
\newtheorem{Exam}[Def]{Example}
\theoremstyle{plain}
\newtheorem{Lem}[Def]{Lemma}
\newtheorem{Cor}[Def]{Corollary}
\newtheorem{Tm}[Def]{Theorem}
\newtheorem{Claim}[Def]{Claim}
\newcommand{\nat}{{\mathbb N}}
\newcommand{\AF}{\ensuremath{\mathcal A}}
\newcommand{\BF}{\ensuremath{\mathcal B}}
\newcommand{\CF}{\ensuremath{\mathcal C}}
\newcommand{\GF}{\ensuremath{\mathcal G}}
\newcommand{\SF}{\ensuremath{\mathcal S}}
\newcommand{\TF}{\ensuremath{\mathcal T}}
\newcommand{\WF}{\ensuremath{\mathcal W}}
\newcommand{\XF}{\ensuremath{\mathcal X}}
\newcommand{\YF}{\ensuremath{\mathcal Y}}
\begin{document}

\title{End-transitive graphs}
\author{Matthias Hamann\\ Universit\"at Hamburg}
\date{}
\maketitle
\begin{abstract}
We investigate the structure of connected graphs, not necessarily locally finite, with infinitely many ends. On the one hand we study end-transitive such graphs and on the other hand we study such graphs with the property that the stabilizer of some end acts transitively on the vertices of the graph.
In both cases we show that the graphs have a tree-like structure.
\end{abstract}

\section{Introduction}

Woess~\cite{W} asked for a classification of the locally finite connected graphs with infinitely many ends and with an end-transitive automorphism group.
M\"oller~\cite{M} and Nevo~\cite{N} independently described these graphs.
The essence of their work is that they are similar to semi-regular trees, in particular, that they are quasi-isometric to semi-regular trees.
(A tree is {\em semi-regular} if all the vertices in each set of its natural bipartition have the same degree.)
The first theorem we shall prove is a generalization of their results to graphs that are not necessarily locally finite.
Before we state the theorem let us briefly define an abbreviation:
For a graph $G$, a vertex $x\in VG$, and $R\in\nat$ let $G(x,R)$ denote the union of the balls $B_R(x^\varphi)$, where $\varphi$ ranges over all automorphisms of~$G$.

\begin{Tm}\label{MTm10Short}
Let $G$ be a connected graph with infinitely many ends such that $G$ is end-transitive and $\Aut(G)$ fixes no vertex set of finite diameter.
For every $x\in VG$ there is an $R\in\nat$ such that $G(x,R)$ is quasi-isometric to a tree and $G-G(x,R)$ does not contain a ray.
\end{Tm}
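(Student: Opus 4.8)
The plan is to model the global shape of $G$ by an $\Aut(G)$-invariant structure tree built from finite vertex separators, and then to show that, for a suitable radius $R$, the set $G(x,R)$ is a coarse copy of this tree while everything infinite has been pulled inside it.

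First I would construct an $\Aut(G)$-invariant, nested set $\NF$ of separations of $G$ of uniformly bounded finite order that distinguishes all ends of $G$; this is possible in principle because distinct ends are, by definition, separated by finite vertex sets. Since $G$ has more than one end there is a separation of finite order with an end on each side, and taking a ``tightest'' such separation and closing its $\Aut(G)$-orbit under the usual nesting procedure (a tree-set construction) yields $\NF$ together with its structure tree $T$, on which $\Aut(G)$ acts. The ends of $G$ then correspond $\Aut(G)$-equivariantly to the ends of $T$, so end-transitivity of $G$ gives that $\Aut(G)$ acts transitively on the ends of $T$. Because there are infinitely many ends, $\Aut(G)$ fixes no end of $T$; and the hypothesis that $\Aut(G)$ fixes no vertex set of finite diameter — equivalently, that every vertex orbit has infinite diameter — rules out the degenerate case in which $\Aut(G)$ fixes a vertex of $T$ (the ``star of rays'' is precisely this excluded configuration). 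Hence the action on $T$ has neither a fixed vertex nor a fixed end, and after passing to the minimal invariant subtree I may assume $T$ is semi-regular, has infinitely many ends, and carries a cocompact $\Aut(G)$-action.

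Next I would record the finitely many bounds that transitivity makes available: a bound on the diameter of the separators in $\NF$ (they lie in finitely many orbits, each of bounded order), a bound on the diameter of the parts of the decomposition (infinite degree is allowed, but bounded \emph{diameter} is what matters), and — using that the orbit of $x$ is invariant, nonempty and, by cocompactness, coarsely dense in $T$ — a bound on the $T$-distance from any part to a part met by $x^{\Aut(G)}$. I then fix $R$ to exceed the sum of these bounds. With this $R$, every separator and every part lies within $G(x,R)$, and the $R$-balls around orbit points sitting in $T$-adjacent parts are joined by short paths, so $G(x,R)$ is connected; mapping a vertex of $G(x,R)$ to a $T$-vertex whose part contains it then yields a quasi-isometry $G(x,R) \to T$. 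The verification is the standard two-sided estimate: any $G$-geodesic between orbit points crosses exactly the separators on the $T$-geodesic between their parts, and each crossing costs a bounded amount, so $G$-distance and $T$-distance are comparable up to the chosen constants.

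Finally, for the ray-free assertion I would prove the key lemma that, with this same $R$, every ray of $G$ has all but finitely many vertices in $G(x,R)$; the second conclusion is then immediate, since a ray contained in $G-G(x,R)$ would be exactly such a ray avoiding $G(x,R)$ entirely. To prove the lemma, let a ray converge to an end $\omega$ of $G$ and let $\eta$ be the corresponding end of $T$; the ray must pass through the strictly descending chain of parts and separators running toward $\eta$, and by coarse density each such part lies within bounded $T$-distance of a part met by the orbit, these latter parts also converging to $\eta$. Since the parts have bounded diameter and the separators bounded order, the ray comes within $R$ of the orbit infinitely often and so cannot have a tail outside $G(x,R)$. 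The main obstacle is the first step in the non-locally-finite setting: separators must be taken as finite \emph{vertex} sets rather than finite edge cuts, and one must guarantee simultaneously that $\NF$ is nested and $\Aut(G)$-invariant and — crucially — that the separator orders and part diameters are \emph{uniformly} bounded, so that a single $R$ works for all of the infinitely many ends at once. This uniformity is exactly what end-transitivity supplies, and turning ``all ends look alike'' into these uniform bounds is the heart of the argument.
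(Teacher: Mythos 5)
Your overall strategy --- build a Dunwoody--Kr\"on structure tree from an invariant nested system of finite vertex separators and show that for a suitable $R$ the set $G(x,R)$ is a coarse copy of that tree while every ray is pulled inside it --- is the same as the paper's, but two steps that you treat as routine bookkeeping are genuine gaps. First, you assert that the ends of $G$ correspond $\Aut(G)$-equivariantly to the ends of $T$. In a non-locally-finite graph an end can instead correspond to a \emph{block vertex} of the structure tree (a dominated end living inside a single part), even when the cut system distinguishes all ends; then end-transitivity of $G$ does not transfer to the ends of $T$, and your final argument (``the ray must pass through the strictly descending chain of parts and separators running toward $\eta$'') fails for rays in such an end. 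Ruling this out is the technical heart of the paper: it splits ends into local and non-local ones, shows that if all ends were local then $\Aut(G)$ would fix a vertex set of finite diameter (Theorem~\ref{MTm10Local}), concludes that under the hypotheses all ends are non-local, and only then proves (Lemmas~\ref{EndsOfTAreNonLocal}, \ref{TFHasEnd} and \ref{Non-LocEndsAreGlobal}) that these are global ends corresponding exactly to the ends of $\TF$. Your remark that the no-fixed-finite-diameter-set hypothesis ``rules out a fixed vertex of $T$'' gestures at this but neither states nor proves it.

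Second, your ``bound on the diameter of the parts'' and the claim that ``every separator and every part lies within $G(x,R)$'' are false. Blocks can have infinite diameter: as the paper observes at the end of Section~\ref{M1Section}, one may glue an arbitrary rayless graph to every vertex of $G$ without disturbing any hypothesis, and these appendages sit inside blocks, arbitrarily far from every separator and from the orbit of $x$. This is precisely why the theorem concerns $G(x,R)$ rather than $G$, and why part~(c) of Theorem~\ref{MTm10} is a list of equivalences that need not hold for $G$ itself. What is actually provable, and what the paper uses, is a bound on the distance between two separators in a common block (Lemma~\ref{DistanceOfSepSmall}) and a bound on how many separators along a ray of $\TF$ can share a vertex (Lemma~\ref{lowerBound}); the quasi-isometry is then established for the metrically almost transitive subgraph $G(x,R)$, whose vertices are by construction uniformly close to the orbit of $x$ and hence to separators. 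As written, your map sending a vertex to a part containing it would not satisfy the two-sided estimate for vertices deep inside a block, so the verification must be rerouted through the orbit-ball structure of $G(x,R)$. The existence of the nested invariant vertex-cut system itself, which you correctly identify as an obstacle, can be quoted from Dunwoody--Kr\"on (Theorem~\ref{basicTreeExists}); the two gaps above, however, are not repaired by the uniformity that end-transitivity supplies.
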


The assumptions of Theorem~\ref{MTm10Short} (infinitely many ends, end-transitivity, no vertex set of finite diameter fixed by the automorphism group) are necessary: whenever we omit one of them, the conclusion of Theorem~\ref{MTm10Short} fails.

\bigskip

A second problem of Woess~\cite{W} is whether there is a classification of the locally finite connected graphs with infinitely many ends such that the stabilizer of one end acts transitively on the vertices of the graph.
He conjectured that such graphs are quasi-isometric to trees, which was subsequently proved by M\"oller~\cite{M2}.
This was generalized by Kr\"on~\cite{K2} to graphs of arbitrary cardinality with infinitely many {\em edge ends} (these are equivalence classes of rays, where two rays are equivalent if no finite set of edges separates them).
We prove here the corresponding result for (vertex) ends:

\begin{Tm}\label{mainTmPart2OfMb}
Let $G$ be a connected graph with infinitely many ends and with automorphism group $\Gamma$ such that for some end $\omega$ of~$G$ its stabilizer $\Gamma_\omega$ acts transitively on the vertices of $G$.
Then $G$ is quasi-isometric to a semi-regular tree with minimum degree $2$.
\end{Tm}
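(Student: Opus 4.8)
The plan is to build a $\Gamma$-invariant structure tree for $G$ out of a nested family of finite-order separations that all ``point toward'' the fixed end $\omega$, and then to use the transitivity of $\Gamma_\omega$ to show that this tree is semi-regular and quasi-isometric to $G$. Since $G$ has infinitely many ends and any two ends are separated by a finite set of vertices, I would first fix a finite-order separation $(A,B)$, oriented so that $\omega$ lives on the $B$-side while the $A$-side still captures at least one further end, together with a base vertex $v_0\in A$. Applying the elements of $\Gamma_\omega$, which fix $\omega$, yields a family $\mathcal{F}=\set{(gA,gB):g\in\Gamma_\omega}$ of finite-order separations, each keeping $\omega$ on its $B$-side; because $\Gamma_\omega$ is vertex-transitive, the $A$-sides $gA$ of these translates reach every vertex of $G$.

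The next step is to pass to a nested subfamily. By a Dunwoody-style uncrossing (symmetrisation) argument one extracts from $\mathcal{F}$ a $\Gamma$-invariant nested set of separations of uniformly bounded order -- a tree set -- whose structure tree $T$ admits a natural action $\Gamma\to\Aut(T)$. Because every separation in the family keeps $\omega$ on one fixed side, the nesting is compatible with $\omega$: the separations pointing toward $\omega$ determine a single end $\eta$ of $T$ fixed by $\Gamma_\omega$, and each vertex of $G$ is separated from $\omega$ by only finitely many members of $\mathcal{F}$, giving a well-defined ``level toward $\omega$''. Showing that the parts of the associated tree-decomposition have uniformly bounded diameter then makes the map sending a vertex of $G$ to its hosting node of $T$ a quasi-isometry; here transitivity is decisive, since a bound established for one part is carried to all parts by $\Gamma$.

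It remains to identify $T$, in reduced form, as a semi-regular tree of minimum degree $2$. The tree $T$ is bipartite -- say with one side coming from the parts of the decomposition and the other from its separators -- and $\Gamma$ preserves this bipartition; vertex-transitivity of $\Gamma_\omega$, together with the fact that $\mathcal{F}$ arose from a single $\Gamma_\omega$-orbit of separations, makes each side of the bipartition a single $\Gamma$-orbit, so all nodes within a side have a common degree and $T$ is semi-regular. Finally, since quasi-isometries preserve the space of ends, $T$ has infinitely many ends; but a semi-regular tree has infinitely many ends only when both of its degrees are at least $2$. Hence $T$ has minimum degree $2$, as asserted, the distinguished end $\eta$ coming from $\omega$ being exactly what guarantees that the structure does not terminate and that the reduction leaves no leaves.

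I expect the main obstacle to be the two uniform bounds, namely the bound on the order of the separations surviving the uncrossing and the bound on the diameters of the parts, since this is precisely where the absence of local finiteness bites. In the locally finite setting both follow from finiteness of balls, whereas here they must be forced by the group action alone. I therefore expect the main work to lie in choosing the initial separation $(A,B)$ tightly enough that uncrossing its $\Gamma_\omega$-translates does not inflate the separator order, so that one finite diameter bound, established at the base vertex $v_0$, can be transported transitively across all of $G$.
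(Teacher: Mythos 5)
Your overall strategy coincides with the paper's: take a $\Gamma_\omega$-invariant nested system of minimal-order separations (a basic cut system in the sense of Dunwoody--Kr\"on), form its structure tree $\TF$, show that the parts have uniformly bounded diameter so that $G$ is quasi-isometric to $\TF$, and deduce semi-regularity from the fact that each side of the bipartition of $\TF$ is a single $\Gamma_\omega$-orbit. Your worry about uncrossing inflating the separator order is resolved in the paper simply by working from the start with separations of minimal order among those separating two ends and citing the Dunwoody--Kr\"on existence theorem for basic cut systems, so that point is not where the difficulty lies.

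The genuine gap is exactly at the step you yourself defer: the uniform bound on the diameters of the parts. Your proposed mechanism --- ``a bound established for one part is carried to all parts by $\Gamma$'' --- cannot get started, because without local finiteness there is no a priori bound for even a single part: one $\SF$-block may contain infinitely many vertices, and nothing in your sketch prevents it from having infinite diameter; transitivity only transports a bound once one exists somewhere. The paper closes this with a short but essential argument (Lemma~\ref{DiamBound}): every vertex $x$ of an $\SF$-block $X$ lies at distance at most $1$ from the separator $S$ that separates $X$ from $\omega$. Indeed, if $d(x,S)=2$, pick $y\in X$ with $d(y,S)=1$ and $\alpha\in\Gamma_\omega$ with $y^\alpha=x$; since $\alpha$ fixes $\omega$, the image of $S$ is a separator at distance $1$ from $x$ still separating $x$ from $\omega$, contradicting the unique determination of the block of $x$ with respect to $\omega$ (Lemma~\ref{UniqueDetOfBlocks}). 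This is precisely where vertex-transitivity of the \emph{end-stabilizer}, rather than of the whole automorphism group, does the work, and it is the ingredient missing from your outline. A secondary flaw: you infer that $T$ has infinitely many ends because ``quasi-isometries preserve the space of ends''; for vertex ends of non-locally-finite graphs this is false in general (dominated ends can collapse under quasi-isometry), so the minimum-degree-$2$ conclusion requires the separate analysis of local versus global ends and of which ends of $G$ correspond to ends of $\TF$ that the paper carries out in Lemmas~\ref{endToEnd} and~\ref{EndOfTreeGlobalEnd}.
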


A further result of M\"oller~\cite{M2} is that in graphs such that the stabilizer of an end acts transitively on the vertices this stabilizer also acts transitively on the other ends.
In graphs that are not necessarily locally finite this is not the case.
But every orbit of the ends (other than the fixed end) is dense in the end space:

\begin{Tm}\label{mainTmPart2OfM}
Let $G$ be a connected graph with infinitely many ends and with automorphism group $\Gamma$. If $\Gamma_\omega$ for some end $\omega$ acts transitively on $VG$, then for any end $\omega'\ne\omega$ the $\Gamma_\omega$-orbit of the ends of~$G$ that contains $\omega'$ is dense in the set of all ends of~$G$.
\end{Tm}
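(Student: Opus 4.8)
The plan is to work with the usual topology on the end space $\Omega$ of $G$, whose basic open sets are the sets $\Omega_C$ of ends living in a component $C$ of $G-S$, as $S$ ranges over the finite subsets of $VG$. In this topology a set is dense precisely when it meets every nonempty basic open set, so it suffices to show: for every finite $S\subseteq VG$ and every component $C$ of $G-S$ that contains an end, there is some $\gamma\in\Gamma_\omega$ with $(\omega')^\gamma\in\Omega_C$. Since enlarging $S$ only refines the topology, and since whenever $C$ contains an end some component $C_1\subseteq C$ of $G-\hat S$ still contains an end for any finite $\hat S\supseteq S$, I may assume $S$ is connected: placing an image of $\omega'$ into such a finer component automatically places it in the coarser one. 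Thus the concrete target becomes: given a connected finite $S$ and a component $C$ of $G-S$ containing an end, produce $\gamma\in\Gamma_\omega$ with $(\omega')^\gamma$ living in $C$.

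Next I would bring in the tree-like structure. By Theorem~\ref{mainTmPart2OfMb} the graph $G$ is quasi-isometric to a semi-regular tree, and the separation analysis behind that result supplies a $\Gamma$-invariant nested family of finite separators, equivalently a structure tree $T$ on which $\Gamma$ acts and whose ends correspond, compatibly with the topology, to the ends of $G$. Under this correspondence the fixed end $\omega$ becomes a direction $\tau$ of $T$ fixed by $\Gamma_\omega$, and $\omega'$ becomes a direction $\tau'\neq\tau$. The \emph{cones} $C_e$ consisting of the ends lying beyond an edge $e$ oriented away from $\tau$ form a base for the complement of $\tau$, and every basic open set that contains $\tau$ also contains some such $C_e$; hence density of the $\Gamma_\tau$-orbit of $\tau'$ reduces to hitting every away-cone $C_e$. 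Writing $e'$ for the first edge oriented away from $\tau$ on the geodesic line of $T$ joining $\tau$ and $\tau'$, we have $\tau'\in C_{e'}$, and since an automorphism fixing $\tau$ preserves the orientation of every edge we get $(\tau')^\gamma\in C_{e'^\gamma}$. So it is enough to find $\gamma\in\Gamma_\tau$ with $e'^\gamma$ lying beyond $e$, i.e. with $C_{e'^\gamma}\subseteq C_e$.

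The heart of the argument is therefore to show that the action is homogeneous enough to push the away-edge $e'$ into the subtree beyond any prescribed away-edge $e$. Here I would exploit the hypothesis that $\Gamma_\omega$ acts transitively on $VG$: transporting this transitivity through the canonical, hence $\Gamma$-invariant, construction of $T$, one obtains that $\Gamma_\omega=\Gamma_\tau$ reaches vertices of $T$ arbitrarily deep inside every cone $C_e$. Choosing $\gamma\in\Gamma_\tau$ that carries the head of $e'$ to such a deep vertex $u\in C_e$, the edge $e'^\gamma$ is the unique away-oriented edge at $u$ (uniqueness of orientation being forced by $\gamma$ fixing $\tau$), so $C_{e'^\gamma}\subseteq C_e$; translating back to $G$, the end $(\omega')^\gamma$ then lives in the component corresponding to $C_e$, which we arrange to sit inside the prescribed $C$.

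I expect the main obstacle to be exactly this homogeneity step: deducing from vertex-transitivity of $\Gamma_\omega$ on $G$ (a statement about $G$-vertices) that $\Gamma_\omega$ can drive the $\omega'$-direction into every away-cone (a statement about the separation structure). The two delicate points are controlling orientation, which is handled cleanly by the observation that fixing $\omega$ fixes the direction \emph{toward} $\omega$ at every separator, so images of the $\omega'$-side can never accidentally flip to the $\omega$-side; and controlling depth, i.e. ensuring that the chosen $\gamma$ genuinely moves the separator all the way inside $C$ rather than leaving part of the $\omega'$-side on the $S$-side. The finiteness and uniform boundedness of the separators furnished by Theorem~\ref{mainTmPart2OfMb}, together with the transitivity hypothesis, are precisely what should make both controls work.
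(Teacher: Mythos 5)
Your argument for the case of global (undominated) ends is essentially the paper's: reduce density to hitting every cone of the structure tree oriented away from the fixed direction, and use the fact that $\Gamma_\omega$ acts with very few orbits on the tree (in the paper, precisely two orbits on $V\TF$, via Lemma~\ref{BlocksIso} and transitivity on the separators) to push the branch vertex between the two directions arbitrarily deep into any prescribed cone; the orientation-preservation observation is also the one the paper uses. For that part your proposal is sound.

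However, there is a genuine gap: you assume that the ends of the structure tree correspond, ``compatibly with the topology,'' to the ends of $G$. In the non-locally-finite setting of this paper that is false, and the failure is exactly what the theorem is about. By Lemma~\ref{EndOfTreeGlobalEnd} only the \emph{global} ends of $G$ correspond to ends of $\TF$; a dominated (local) end of $G$ corresponds to a \emph{block vertex} of $\TF$, not to an end of $\TF$. Your reduction therefore breaks down in two places. First, $\omega'$ itself may be a local end, in which case it determines no direction $\tau'$ of $T$ and your ``first edge on the geodesic between $\tau$ and $\tau'$'' does not exist. Second, the basic open set $\Omega_C$ you must hit may contain only local ends: a finite connected $S$ sitting inside a single $\SF$-block can cut off a component $C$ that contains rays but no $\SF$-separator, so $C$ contains no away-cone $C_e$ at all, and ``hitting every away-cone'' is then strictly weaker than density in the full end space. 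The paper's proof spends most of its length on precisely these cases: it shows that a block vertex carrying a local end has infinitely many neighbours in $\TF$ (hence in $G$), so that no finite vertex set can separate it from all but finitely many $\Gamma_\omega$-images of the target end, and it runs separate arguments according to whether the end being approximated and the end being moved are local or global. Without an argument of this kind your proof only establishes density of the orbit of a global $\omega'$ inside the subspace of global ends, which is a strictly weaker statement than the theorem.
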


\section{Preliminaries}

\subsection{Ends of graphs}\label{Ends}

Throughout this paper we use the terms and notation from \cite{D} if not stated otherwise.
In particular, a {\em ray} is a one-way infinite path. Two rays in a graph $G$ are {\em equivalent} if there is no finite vertex set $S$ in $G$ such that the two rays lie eventually in distinct components of $G-S$. The equivalence of rays is an equivalence relation whose classes are the {\em (vertex) ends} of $G$.
If we just talk of ends of graphs in this paper we always think of vertex ends.
All other end types - which we will define in a moment - will be stated concretely.

By replacing the finite vertex set $S$ in the definition of ends by a finite edge set one obtains {\em edge ends}.
Obviously for every graph there is a canonical map from its ends to its edge ends which is surjective but in general not injective.

A {\em metric ray} is a ray such that no infinite subset of its vertices has finite diameter.
Two metric rays are {\em metrically equivalent} if for every vertex set $S$ of finite diameter both rays lie eventually in the same component of $G-S$.
The equivalence classes of metrically equivalent metric rays are the {\em metric ends} of a graph.
Just as the ends are a refinement of edge ends, the metric ends are a refinement of ends.
A group acts {\em metrically almost transitively} on a graph $G$ if there is an $r\in\nat$ such that for every $x\in VG$ there is $G(x,r)=G$.
See \cite{K,KM} for more details on metric ends and metrically almost transitive graphs.

An end is {\em global} if every ray in that end is a metric ray.
If conversely there is no metric ray in an end this is a {\em local} end.
If an end not local, then it is a {\em non-local} end.
So an end is non-local if it contains a metric ray.

A vertex $x\in VG$ {\em dominates} an end $\omega$ if there is a ray $R$ in~$\omega$ and an infinite set of (except for $x$) pairwise disjoint $x$-$R$-paths.
An end $\omega$ is {\em thin} if there is an $n\in\nat$ such that there are at most $n$ disjoint rays in~$\omega$.
If the automorphism group of a graph acts transitively on the ends of that graph then the graph is {\em end-transitive}.

An automorphism $\alpha$ of a graph $G$ is a {\em translation} if there is no finite vertex set fixed by $\alpha$.

Let $X$ and $Y$ be metric spaces.
A map $\varphi:X\to Y$ is a {\em quasi-isometry} from $X$ to~$Y$ if there are constants $C\geq 1$, $D\geq 0$ such that for all $x,z\in X$ there is
$$\frac{1}{C}d(x^\varphi,z^\varphi)-D\leq d(x,z)\leq Cd(x^\varphi,z^\varphi)+D$$
and for all $y\in Y$ there is
$$d(y,X^\varphi)\leq D.$$

\subsection{Structure trees}\label{StructureTree}

Let $G$ be a connected graph and let $A,B\sub V(G)$ be two vertex sets. The pair $(A,B)$ is a \emph{separation} of $G$ if $A\cup B= V(G)$ and $E(G[A])\cup E(G[B])=E(G)$.
The \emph{order} of a separation $(A,B)$ is the order of its {\em separator} $A\cap B$ and the subgraphs $G[A\sm B]$ and $G[B\sm A]$ are the \emph{wings} of $(A,B)$.
With $(A,\sim)$ we refer to the separation $(A,(V(G)\sm A) \cup N(V(G)\sm A))$.
A {\em cut} is a separation $(A,B)$ of finite order with non-empty wings such that the wing $G[A\sm B]$ is connected and such that no proper subset of~$A\cap B$ separates the wings of~$(A,B)$.
A {\em cut system} of~$G$ is a non-empty set $\SF$ of separations $(A,B)$ of~$G$ satisfying the following three properties.
\begin{enumerate}[1.]
\item If $(A,B)\in\SF$ then there is an $(X,Y)\in \SF$ with $X\sub B$.
\item Let $(A,B)\in\SF$ and $C$ be a component of $G[B\sm A]$. If there is a separation $(X,Y)\in\SF$ with $X\sm Y\sub C$, then the separation $(C\cup N(C),\sim)$ is also in $\SF$.
\item If $(A,B)\in\SF$ with wings $X,Y$ and $(A',B')\in\SF$ with wings $X',Y'$ then there are components $C$ in $X\cap X'$ and $D$ in $Y\cap Y'$ or components $C$ in $Y\cap X'$ and $D$ in $X\cap Y'$ such that both $C$ and $D$ are wings of separations in~$\SF$.
\end{enumerate}

Two separations $(A_0,A_1),(B_0,B_1)\in \SF$ are \emph{nested} if there are $i,j\in \set{0,1}$ such that one wing of $(A_i\cap B_j,\sim)$ does not contain any component $C$ with ${(C \cup N(C),\sim)\in\SF}$ and $A_{1-i}\cap B_{1-j}$ contains $(A_0\cap A_1)\cup(B_0\cap B_1)$.
A cut system is {\em nested} if each two of its cuts are nested.

A cut in a cut system $\SF$ is {\em minimal} if its order in $\SF$ is minimal.
A {\em minimal cut system} is a cut system all whose cuts are minimal and thus have the same order.

Let us describe two minimal cut systems one of which was introduced by Dunwoody and Kr\"on~\cite[Example 2.2]{DK}.
Both will be used in the proofs of the main results.

\begin{Exam}\label{Example}
Let $G$ be a connected infinite graph with at least two ends (two non-local ends).
Let $n$ be the smallest order of a finite vertex set $X$ such that there are at least two components in $G-X$ that contain a ray (a metric ray) each.
Let $\SF$ be the set of all cuts $(A,B)$ with order $n$ such that both $G[A]$ and $G[B]$ contain a ray (a metric ray).
Then $\SF$ is a minimal cut system.
\end{Exam}

An {\em $\SF$-separator} is a vertex set $S$ that is a separator of some separation in $\SF$.
Let $\WF$ be the set of $\SF$-separators.
An \emph{$\SF$-block} is a maximal induced subgraph $X$ of~$G$ such that
\begin{enumerate}[(i)]
\item for every $(A,B)\in\SF$ there is $V(X)\sub A$ or $V(X)\sub B$ but not both;
\item there is some $(A,B)\in\SF$ with $V(X)\subseteq A$ and $A\cap B\subseteq V(X)$.
\end{enumerate}
Let $\BF$ be the set of $\SF$-blocks.
For a nested minimal cut system $\SF$ let $\TF$ be the graph with vertex set $\WF\cup\BF$. Two vertices $X,Y$ of~$\TF$ are adjacent if and only if either $X\in\WF$, $Y\in\BF$, and $X\sub Y$ or $X\in\BF$, $Y\in\WF$, and $Y\sub X$.
Then $\TF=\TF(\SF)$ is called the \emph{structure tree} of~$G$ and $\SF$ and by Lemma~6.2 of~\cite{DK} it is indeed a tree.

An {\em $\SF$-slice} is the induced subgraph $G[Z]$ of a component $Z$ of $G-(A\cap B)$ with $(A,B)\in\SF$ such that $(Z\cup (A\cap B),\sim)\notin\SF$.

A separation $(A,B)\in\SF$ {\em separates} two $\SF$-blocks, two ends of~$G$, or an $\SF$-block and an end of~$G$ if the blocks intersects non-trivially with distinct wings of $(A,B)$, if each two rays of distinct ends lie eventually in distinct wings of $(A,B)$, or if each ray eventually lies in that $(A,B)$-wing that intersects with the block trivially.
If one separator $S$ separates two vertices of another separator $S'$ the separators $S$ and $S'$ {\em crosses}.
It is a consequence of Lemma~3.3 and Theorem~3.5 of~\cite{DK} that two minimal separations are nested if and only if their corresponding separators do not cross.

A cut system $\SF$ of a connected graph $G$ is {\em basic} if $\SF$ is minimal nested and an $\Aut(G)$-invariant cut system such that $\SF$ is a subsystem of the minimal cut system given in Example~\ref{Example} and if all separators $A\cap B$ with $(A,B)\in \SF$ belong to the same $\Aut(G)$-orbit.

We state here that part of Theorem~7.2 of~\cite{DK} that we shall use here.

\begin{Tm}\label{basicTreeExists}\label{VertexCuts}
For every graph $G$ with at least two ends (two non-local ends) there is a basic cut system $\SF$ of~$G$.\qed
\end{Tm}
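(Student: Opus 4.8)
The plan is to construct the desired basic cut system explicitly from the minimal cut system $\SF_0$ given in Example~\ref{Example}, refining it in two stages: first to obtain nestedness, then to ensure all separators lie in a single $\Aut(G)$-orbit. Since $G$ has at least two ends, Example~\ref{Example} provides a minimal $\Aut(G)$-invariant cut system $\SF_0$ consisting of all cuts of the minimal separating order $n$ whose two wings each contain a ray. The remaining obstacles are that two cuts in $\SF_0$ need not be nested, and that their separators need not all lie in one orbit.

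First I would address nestedness. By the cited consequence of Lemma~3.3 and Theorem~3.5 of~\cite{DK}, two minimal separations fail to be nested precisely when their separators cross. The standard technique here — going back to the Dunwoody--Kr\"on theory of structure trees — is to show that from $\SF_0$ one can extract a maximal nested $\Aut(G)$-invariant subsystem, or more robustly to run an uncrossing argument: given two crossing cuts of the same order $n$, submodularity of the separator-order function produces cuts of order at most $n$ that are ``less crossing,'' and minimality of $n$ forces these corner separations again to have order exactly $n$ and to lie in $\SF_0$ (after checking both wings still contain rays, which is where properties~1--3 of a cut system are used). Iterating and invoking a Zorn's-lemma or canonical-choice argument yields a nested minimal cut system $\SF_1\sub\SF_0$ that remains $\Aut(G)$-invariant because the uncrossing procedure is itself automorphism-equivariant. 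I would verify directly that $\SF_1$ still satisfies the three defining properties of a cut system, since passing to a subsystem can destroy property~1 or~2 if done carelessly.

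Next I would arrange for the separators to lie in a single orbit. The automorphism group $\Aut(G)$ partitions the separators $\{A\cap B:(A,B)\in\SF_1\}$ into orbits. Because all these separators have the same finite order $n$, and because the structure tree $\TF(\SF_1)$ organizes them into a tree on which $\Aut(G)$ acts, I would pick one orbit — for instance, an orbit of separators whose corresponding cuts are ``innermost'' or otherwise canonically distinguished by the tree structure — and let $\SF$ consist of all cuts in $\SF_1$ whose separator lies in that chosen orbit. The delicate point is to choose the orbit so that $\SF$ is still a cut system: property~1 requires that below every cut there lies another cut of the system, so the chosen orbit must be ``cofinal'' in the separation order, and property~3 (the corner-component condition) must survive the restriction. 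I expect this to work by exploiting that $\Aut(G)$ acts on $\TF(\SF_1)$ and that the infinitely-many-ends hypothesis guarantees the tree is infinite, so an orbit meeting every ``level'' of the tree exists.

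The hard part will be the uncrossing step and verifying that the restricted system retains all three cut-system properties simultaneously with $\Aut(G)$-invariance and single-orbit separators; these constraints pull in different directions, and keeping minimality (order exactly $n$) throughout the uncrossing is where submodularity must be applied with care to ensure no corner separation drops below order $n$ while both of its wings genuinely contain rays (or metric rays, in the parenthetical variant). Everything else — that $\SF\sub\SF_0$, that $\SF$ is minimal, and that it is $\Aut(G)$-invariant — follows more or less formally once these two refinements are carried out equivariantly.
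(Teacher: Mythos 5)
The paper gives no proof of this statement at all: it is imported verbatim as (part of) Theorem~7.2 of Dunwoody and Kr\"on~\cite{DK}, with the proof-box placed directly in the statement. So there is no in-paper argument to compare yours against; the only fair benchmark is the construction in~\cite{DK}. Your sketch does reconstruct the broad architecture of that construction --- start from the minimal cut system of Example~\ref{Example}, uncross to get a nested $\Aut(G)$-invariant subsystem, then restrict to a single orbit of separators --- so you have identified the right strategy.

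As a proof, however, the proposal has genuine gaps at exactly the two points you yourself defer. First, the uncrossing step: for vertex separators, submodularity gives corner separators of order at most $n$, but you must also show that the relevant corner wings are connected, each contain a ray (respectively a metric ray), that no proper subset of the corner separator separates the wings (otherwise the corner separation is not a cut), and that the surviving family still satisfies properties~1--3. A Zorn's-lemma maximal nested subfamily need not satisfy property~1, and ``a canonical-choice argument'' is precisely the thing that has to be constructed, not invoked. The device actually used in~\cite{DK} --- and echoed in Lemma~\ref{thirdCutSystem} of this paper --- is to select the cuts minimizing the crossing number $m_{\SF}(A,B)$, which is simultaneously canonical, $\Aut(G)$-equivariant, and provably nested; your sketch supplies no substitute for it. Second, your orbit-selection step presupposes the structure tree $\TF(\SF_1)$ of the intermediate system, and the claim that some single orbit of separators is ``cofinal'' enough for property~1 (and that property~3 survives the restriction) is asserted rather than argued; when $\Aut(G)$ has infinitely many orbits on separators this needs a real argument. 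In short, you have correctly located where the difficulty lies, but the proposal as written postpones rather than closes the steps on which the theorem rests.
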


A ray $R$ {\em corresponds} to a vertex $X$ of~$\TF$ if $X$ is a block and $R\cap X$ is infinite.
A ray $R$ {\em corresponds} to an end $\omega$ of~$\TF$ if for any ray $P$ in~$\omega$ and for every $\SF$-separator $S$ on~$P$ all but finitely many vertices of $R$ lie in the same component of $G-S$ as that $\SF$-block which is in~$\TF$ adjacent to~$S$ and which separates $S$ from $\omega$ in~$G$.
Obviously a ray of~$G$ corresponds either to a vertex of~$\TF$ or to an end of~$\TF$.
As all rays in the same end have to correspond to the same vertex or end of~$\TF$, we also say that the end {\em corresponds} to that end or vertex of~$\TF$.

For a cut $(A,B)$ and a minimal cut system $\SF$ let $m_\SF(A,B)$ denote the number of distinct $\SF$-separators $S$ such that there is one $\SF$-separation that is not nested with $(A,B)$ and that has $S$ as its separation.
By \cite[Theorem~3.5, Lemma~4.1]{DK} the value $m_\SF(A,B)$ is finite.

\begin{Lem}\label{thirdCutSystem}
Let $G$ be a connected graph.
Let $\CF$ be a minimal cut system.
Let $\SF_1$ and $\SF_2$ be two nested subsystems of~$\CF$ each with a basic structure tree. Suppose that there are separations of $\SF_1$ and $\SF_2$ that are not nested.
Then there is a nested subsystem $\SF$ of~$\CF$ with a basic structure tree such that $\SF\cup \SF_2$ is a nested cut system and $m_{\SF_1}(A,B)<m_{\SF_1}(A',B')$ for all $(A,B)\in\SF, (A',B')\in\SF_2$.
\end{Lem}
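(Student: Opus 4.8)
The plan is to obtain $\SF$ from $\SF_2$ by a single round of uncrossing against $\SF_1$, exploiting that both systems are $\Aut(G)$-invariant with all separators in one orbit. The first step is to reduce the ``for all $(A',B')\in\SF_2$'' in the conclusion to controlling a single number. Since $\SF_1$ is $\Aut(G)$-invariant and the separators of $\SF_2$ all lie in one $\Aut(G)$-orbit, any automorphism carrying one $\SF_2$-separation to another maps the $\SF_1$-separations not nested with the first bijectively onto those not nested with the second; hence $m_{\SF_1}(A',B')$ takes a common value $k$ for all $(A',B')\in\SF_2$. The hypothesis that some $\SF_1$- and some $\SF_2$-separation are not nested, spread over the orbit by the same invariance, forces every $\SF_2$-separator to cross an $\SF_1$-separator, so $k\ge 1$. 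It therefore suffices to produce a basic cut system $\SF\sub\CF$, nested with $\SF_2$, all of whose separations $(A,B)$ satisfy $m_{\SF_1}(A,B)\le k-1$.

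Next I would carry out the uncrossing on a fixed representative $(A',B')\in\SF_2$ with separator $S'$. By the finiteness statement of \cite[Theorem~3.5, Lemma~4.1]{DK} only finitely many $\SF_1$-separators cross $S'$; pick one such $(A_1,B_1)\in\SF_1$ with separator $T$. Since $\CF$ is minimal every separation in it has order $n$, and submodularity of the vertex boundary (as in \cite[Theorem~3.5, Lemma~4.1]{DK}) shows that a suitable corner cut $E=(C\cup N(C),\sim)$, for an appropriate component $C$ of $G[(A'\cap A_1)\sm(S'\cup T)]$, again has order $n$ and carries a ray in each wing; being a minimal cut of the system of Example~\ref{Example} it lies in $\CF$. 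The wing of $E$ sits on one side of $(A_1,B_1)$, so $E$ is nested with $(A_1,B_1)$ and no longer crosses $T$; and because $\SF_1$ is nested, any $\SF_1$-separation that was nested with both $(A',B')$ and $(A_1,B_1)$ remains nested with $E$. Consequently $E$ crosses at most $k-1$ of the $\SF_1$-separators, that is $m_{\SF_1}(E)\le k-1$.

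I would then define $\SF$ to be the $\Aut(G)$-closure of $E$ (equivalently, perform the uncrossing equivariantly over the whole orbit of $S'$ and over every crossing $\SF_1$-separator), adjoining the further corner cuts forced by the cut-system axioms. By construction $\SF$ is $\Aut(G)$-invariant, its separators form a single orbit and all have order $n$, and $m_{\SF_1}$ is constant $\le k-1$ on $\SF$. The bulk of the work is to check that this orbit-closed family of corners is a minimal nested cut system with a basic structure tree: one verifies the three cut-system axioms (the matching-wings axiom~3 being the delicate one), applies the criterion of \cite{DK} that two minimal separations are nested if and only if their separators do not cross to conclude that the corners, each lying inside wings of $\SF_1$- and $\SF_2$-separations, are mutually non-crossing, and notes that $\SF$ is a subsystem of the system of Example~\ref{Example} by minimality. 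For nestedness with $\SF_2$ I would use that each corner's separator is contained in $S'\cup T$ with $S'$ an $\SF_2$-separator and $T$ an $\SF_1$-separator, and that $\SF_2$ is nested with the corner's wing trapped inside a single wing of $(A',B')$; this gives that no corner crosses any $\SF_2$-separator, so $\SF\cup\SF_2$ is nested.

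I expect the main obstacle to be exactly this verification that the uncrossed corners assemble into a basic cut system while staying nested with all of $\SF_2$. The twin dangers are that an individual corner might cross an $\SF_2$-separator that was nested with $(A',B')$, or that the equivariant family might violate axiom~3. Both are controlled by choosing the corner on the correct side, so that its separator is confined between the two nested systems $\SF_1$ and $\SF_2$, and by arguing through the non-crossing-of-separators criterion of \cite{DK} rather than directly about wings; once non-crossing is established, the cut-system axioms and the basicness of the structure tree follow as in the proof of Theorem~\ref{VertexCuts}. The desired inequality $m_{\SF_1}(A,B)\le k-1<k=m_{\SF_1}(A',B')$ is then immediate from the first two steps.
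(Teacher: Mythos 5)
Your overall strategy is the same as the paper's: uncross a crossing pair consisting of an $\SF_1$- and an $\SF_2$-separation by passing to a corner cut $(C\cup NC,\sim)$, take its $\Aut(G)$-orbit as $\SF$, and win because the corner is nested with the $\SF_1$-separation that the original $\SF_2$-separation crossed. The reduction to a single constant $k=m_{\SF_1}(A',B')$ via orbit-invariance is sound and is implicit in the paper as well. However, there is a genuine gap at exactly the point you yourself flag as ``the main obstacle'': you start from an \emph{arbitrary} crossing pair $(A',B')\in\SF_2$, $(A_1,B_1)\in\SF_1$ and an unspecified corner, and then assert that the twin dangers (the corner crossing some other $\SF_2$-separator, or new crossings with $\SF_1$-separations that were nested with $(A',B')$) are ``controlled by choosing the corner on the correct side.'' For an arbitrary crossing pair no choice of side achieves this: the corner's separator $NC$ contains the piece $X=T\cap(\text{wing of }(A',B'))$ of the $\SF_1$-separator $T$, and nothing prevents another $\SF_2$-separator --- nested with $(A',B')$ but also crossing $T$ --- from separating two vertices of $X$, nor an $\SF_1$-separator nested with $(A',B')$ from separating a vertex of $X$ from a vertex of $NC\cap S'$. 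The observation that $NC\sub S'\cup T$ rules out neither, because $T$ itself may cross other members of $\SF_2$ and may be approached arbitrarily closely by other members of $\SF_1$.

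The missing idea is the extremal choice the paper makes: fix $(A_1,B_1)\in\SF_1$ not nested with some member of $\SF_2$, and choose $(A_2,B_2)\in\SF_2$ so that the nonempty intersection $X$ of one of its wings with $A_1\cap B_1$ is \emph{minimal}, with the additional requirement that the $\SF_2$-block containing $X$ is adjacent to $A_2\cap B_2$ in $\TF_2$; then take the corner component $C$ with $X\sub NC$. It is this minimality of $X$ --- not a choice of side for a fixed pair --- that the paper invokes both for the nestedness of $\SF\cup\SF_2$ and for the inclusion ``every $\SF_1$-separation nested with $(A_2,B_2)$ is nested with $(C\cup NC,\sim)$,'' which yields $m_{\SF_1}(C\cup NC,\sim)\le m_{\SF_1}(A_2,B_2)$, with strictness supplied by $(A_1,B_1)$. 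Your counting step and the final orbit-closure and nestedness-of-$\SF$ verifications would then go through essentially as in the paper, but as written the proposal defers rather than supplies the decisive step.
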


\begin{proof}
Let $(A_1,B_1)\in\SF_1$ such that $(A_1,B_1)$ is not nested with all $(A,B)\in\SF_2$.
We choose $(A_2,B_2)\in\SF_2$ such that the intersection $X$ of one wing of $(A_2,B_2)$ with $A_1\cap B_1$ is minimal but not empty and such that the $\SF_2$-block containing $X$ is in the structure tree $\TF_2$ adjacent to $A_2\cap B_2$.
We may assume that $X\sub A_2$.
Then there is a component $C$ of $G-(A_1\cap B_1)-(A_2\cap B_2)$ such that $X\sub NC$ and $(C\cup NC,\sim)$ is a minimal cut.
Let $\SF$ be the set of all those cuts such that their separator is $NC^\alpha$ for any $\alpha\in\Aut(G)$.
We just have to prove that $\SF$ fulfills the claims of the lemma, so we have to prove that $\SF$ is a nested cut system, that $\SF_2\cup \SF$ is nested and that $m_{\SF_1}(A,B)<m_{\SF_1}(A',B')$ for all $(A,B)\in\SF, (A',B')\in\SF_2$.

By the minimal choice of~$X$ it follows that $\SF_2\cup \SF$ is nested.
So it remains to prove the inequality and that $\SF$ is nested.
Let us first prove the inequality.
Since each $\SF_1$-separation which is nested with $(A_2,B_2)$ also has to be nested with $(C\cup NC,\sim)$ by the minimal choice of~$X$, the inequality holds with $\leq$ instead of $<$, namely $m_{\SF_1}(C\cup NC,\sim)\le m_{\SF_1}(A_2,B_2)$.
But on the other hand there is the $\SF_1$-separation $(A_1,B_1)$ that is nested with $(C\cup NC,\sim)$ but not nested with $(A_2,B_2)$ and hence the inequality is strict.
Let us finally show that $\SF$ is nested.
Let $S:=A\cap B$ and let $\alpha\in\Aut(G)$ with $S^\alpha$ in the same component of $G-S$ in which $X$ lies.
By the choice of~$S$ and $X$ we know that $S^\alpha$ does not cross $A_1\cap B_1$.
Thus there is a component $D$ of $G-NC$ such that $S^\alpha\sub D\cup ND$.
The separator $(A_1\cap B_1)^\alpha$ crosses with $S^\alpha$ and thus it has to lie in the same component of $G-(A\cap B)$ as $S^\alpha$ does.
By a similar argument as before we know that $S$ does not separate $X^\alpha$ from $S^\alpha$ and thus both $S^\alpha$ and $X^\alpha$ do not intersect with the component of $G-(C\cup NC)$ that intersects with $X$ non-trivially.
Thus there are two $\SF$-separation with corresponding separators $NC$ and $NC^\alpha$ that are nested and as mentioned before this implies by arguments of~\cite{DK} that $NC$ and $NC^\alpha$ do not cross.
Thus $\SF$ is a nested cut system.
\end{proof}

\section{Structure trees and semi-regular trees}

In this section we prove that every basic structure tree of any connected graph whose automorphism group acts transitively on the non-local ends and fixes no vertex set of finite diameter is a semi-regular tree or a subdivided semi-regular tree.

Although the proof of the following lemma is similar to arguments in~\cite{N,SW} we proof it here because of the last part claimed.

\begin{Lem}\label{TranslExists}\label{TranslationExists}
Let $T$ be a tree and $\Gamma\leq \Aut(T)$ such that $\Gamma$ acts transitively on one set $A$ of the natural bipartition $A\cup B$ of~$VT$. 
Then for every path $x_0\ldots x_4$ of length $4$ between two vertices of~$A$ there is an automorphism $g\in\Gamma$ such that $g$ is a translation on~$T$ and either $x_0^g=x_2$ or $x_0^g=x_4$.
\end{Lem}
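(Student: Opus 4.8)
The plan is to use the standard trichotomy for automorphisms of a tree: every $\alpha\in\Aut(T)$ either fixes a vertex, inverts an edge, or is hyperbolic with an axis, and $\alpha$ is a translation (in the sense of the paper, i.e.\ fixes no finite vertex set) precisely in the hyperbolic case. First I would use the transitivity of $\Gamma$ on $A$ to pick $g\in\Gamma$ with $x_0^g=x_2$ and $f\in\Gamma$ with $x_0^f=x_4$; note $x_0,x_2,x_4\in A$ and $x_1,x_3\in B$. Since the displacements $d(x_0,x_0^g)=2$ and $d(x_0,x_0^f)=4$ are even, neither $g$ nor $f$ can invert an edge (an edge inversion moves every vertex an odd distance), so each of $g,f$ is either vertex-fixing or hyperbolic. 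If $g$ is hyperbolic it is already the desired translation with $x_0^g=x_2$; if $f$ is hyperbolic it is the desired translation with $x_0^f=x_4$. This is where the disjunction in the statement comes from.

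It remains to treat the case that both $g$ and $f$ fix a vertex, and here I would first pin down the fixed-point sets. If an elliptic $\alpha$ maps $u$ to $u'$ and $z$ is any vertex it fixes, then $z$ is equidistant from $u$ and $u'$, and tracking the geodesics $[z,u]$ and $[z,u']$ (which $\alpha$ interchanges while fixing their common initial segment) shows that $\alpha$ fixes the median $m(u,u',z)$. Applying this to $g$ with $u=x_0,u'=x_2$ gives that $g$ fixes $x_1$ (the midpoint of $x_0x_2$), while $x_2\notin\mathrm{Fix}(g)$ (as $x_0^g=x_2\ne x_0$); applying it to $f$ with $u=x_0,u'=x_4$ gives that $f$ fixes $x_2$ and sends $x_1$ to $x_3$, so $x_1\notin\mathrm{Fix}(f)$. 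Thus the subtree $\mathrm{Fix}(g)$ contains $x_1$ but not $x_2$, and the subtree $\mathrm{Fix}(f)$ contains $x_2$ but not $x_1$.

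The key step is then to show $\mathrm{Fix}(g)\cap\mathrm{Fix}(f)=\es$ and to combine. If some $w$ lay in both, then $[x_1,w]\sub\mathrm{Fix}(g)$ and $[x_2,w]\sub\mathrm{Fix}(f)$; since $x_1$ and $x_2$ are adjacent, the median $m(x_1,x_2,w)$ is $x_1$ or $x_2$, forcing either $x_1\in\mathrm{Fix}(f)$ or $x_2\in\mathrm{Fix}(g)$, a contradiction. With disjoint fixed trees, the standard combination lemma for tree automorphisms yields that $gf$ is hyperbolic, hence a translation; and $x_0^{gf}=(x_0^g)^f=x_2^f=x_2$, because $f$ fixes $x_2$. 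This produces the required translation sending $x_0$ to $x_2$. I expect the main obstacle to be exactly this last combination step: one must ensure not only that the product of the two elliptic elements is hyperbolic but that it sends $x_0$ to one of the two admissible vertices. Mapping both $g$ and $f$ out of the common vertex $x_0$ (rather than relaying $x_0\to x_2\to x_4$) is what makes the image computable, and locating the two fixed-point sets precisely at $x_1$ and $x_2$ is what guarantees their disjointness.
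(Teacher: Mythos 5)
Your proposal is correct and follows essentially the same route as the paper: both arguments take $g$ with $x_0^g=x_2$ and $h$ with $x_0^h=x_4$, observe that if either fails to be a translation it must fix $x_1$ (resp.\ $x_2$), and then check that the product sends $x_0$ to $x_2$ while moving $x_1$ to $x_3$, hence is a translation. The only difference is one of explicitness: you spell out the elliptic/inversion/hyperbolic trichotomy, the location of the fixed subtrees, and the combination lemma for two elliptics with disjoint fixed trees, all of which the paper's shorter proof uses implicitly.
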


\begin{proof}
There is an automorphism $g\in\Gamma$ with $x_0^g=x_2$.
If $x_1^g\ne x_1$ then $g$ is a translation as claimed.
So let us assume that $x_1^g= x_1$.
There is an automorphism $h\in\Gamma$ with $x_0^h=x_4$.
If $x_2^h\ne x_2$ then $h$ is a translation as claimed.
Thus let us assume that $x_2^h= x_2$.
Let $f:=gh$.
Then $x_0^f=x_2$ and $x_1^f\ne x_1$. Hence $f$ is a translation and the lemma is proved.
\end{proof}

\begin{Lem}\label{EndsOfTAreNonLocal}
Let $G$ be a connected graph with infinitely many non-local ends such that $\Gamma:=\Aut(G)$ acts transitively on the non-local ends of~$G$.
Let $\SF$ be a basic cut system such that each $\SF$-separation separates metric rays.
If $\Gamma$ fixes no vertex set of finite diameter, then no end of~$\TF$, the structure tree of~$G$ and $\SF$, corresponds to a local end of~$G$.
\end{Lem}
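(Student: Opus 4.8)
The plan is to argue by contradiction: assume some end $\eta$ of $\TF$ corresponds to a \emph{local} end $\omega$ of $G$, and derive a contradiction with the hypothesis that $\Gamma$ fixes no vertex set of finite diameter. First I would fix the combinatorial frame. Let $(A_i,B_i)\in\SF$, $i\in\nat$, be the separations met by a ray of $\TF$ converging to $\eta$, ordered so that the wings $W_i$ on the $\eta$-side satisfy $W_1\supseteq W_2\supseteq\cdots$, with separators $S_i=A_i\cap B_i$; each $W_i$ is connected and, since $W_i$ is decreasing, a vertex lying in infinitely many $W_i$ would lie in all of them, whence $\bigcap_i V(W_i)=\es$. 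Two preliminary facts then organise the argument. First, the correspondent is \emph{unique}: if $\omega\ne\omega'$ both corresponded to $\eta$, a finite set $S$ would separate them, but each vertex of $S$ lies in only finitely many $W_i$, so $S\cap W_i=\es$ for large $i$; then the connected wing $W_i$ lies in a single component of $G-S$ and contains tails of rays of both ends, a contradiction. Second, since $\SF$ separates metric rays, every $W_i$ contains a metric ray. By uniqueness it therefore suffices to exhibit \emph{one} metric ray of $G$ corresponding to $\eta$: such a ray lies in the unique correspondent $\omega$, forcing $\omega$ to be non-local and contradicting the assumption.

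To produce a ray toward $\eta$ I would invoke the translation lemma. As $\SF$ is basic, $\Gamma$ acts transitively on the set $\WF$ of separators, which is one class of the natural bipartition $\WF\cup\BF$ of $\TF$. Applying Lemma~\ref{TranslationExists} with $A=\WF$ to the length-$4$ path $S_i,\,B_i,\,S_{i+1},\,B_{i+1},\,S_{i+2}$ on the ray to $\eta$ yields a $g\in\Gamma$ that is a translation of $\TF$ with $S_i^g\in\set{S_{i+1},S_{i+2}}$; thus $g$ translates toward $\eta$, its axis has $\eta$ as one end, and $W_i^g=W_{i+c}$ for some $c\in\set{1,2}$. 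Choosing a vertex $v\in W_1\sm W_{1+c}$ and any $v$–$v^g$ path $Q$ in $G$, the concatenation $P:=Q\cup Q^g\cup Q^{g^2}\cup\cdots$ is a ray; since $v^{g^m}\in W_{1+mc}$ sits in ever deeper wings, $P$ crosses every $S_i$ and hence corresponds to $\eta$.

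It remains to see that $P$ is a \emph{metric} ray, and this is the heart of the matter. The ray $P$ is metric exactly when $d_G(v,v^{g^m})\to\infty$, equivalently when the separators $S_1,S_1^g,S_1^{g^2},\dots$ along the axis are not confined to a set of finite diameter. This is precisely where the hypothesis ``$\Gamma$ fixes no vertex set of finite diameter'' must enter, and where I expect the main obstacle to lie. The difficulty is that a translation of the \emph{tree} $\TF$ need not move the vertices of $G$ far: a priori the end $\eta$ could be ``metrically dominated'', a single bounded set meeting every $W_i$. (Indeed, the set of separators through a fixed vertex $a$ is convex in $\TF$, so a vertex lying on infinitely many $S_i$ would form a ray-long run of nested separators and dominate $\eta$ within bounded distance; this is exactly the configuration that would make $\omega$ local, and it is the case that has to be excluded.) I would rule this out by combining all three hypotheses: if $d_G(v,v^{g^m})$ stayed bounded, the $\langle g\rangle$-orbit of $v$ would sweep out a bounded $g$-invariant region, and, using that $\Gamma$ acts transitively on the infinitely many non-local ends (hence on the $\Gamma$-orbit of $\eta$ in $\TF$) together with the homogeneity of the one-orbit separator system, one would propagate this bounded domination uniformly to every end in that orbit and extract from it a vertex set of finite diameter fixed by all of $\Gamma$ — the forbidden object. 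Making this extraction precise, i.e. upgrading the $\TF$-translation to one that genuinely spreads the vertices of $G$ and thereby excluding bounded domination of $\eta$, is the technically delicate step on which the whole argument turns.
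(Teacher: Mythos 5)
Your reduction of the problem to the ``boundedly dominated'' case is the right diagnosis, but the argument has two genuine gaps, one of which you flag yourself. First, the step you defer is the whole proof. Your sketched resolution --- if $d_G(v,v^{g^m})$ stays bounded, propagate the bounded domination around the orbit of $\eta$ and ``extract a vertex set of finite diameter fixed by all of $\Gamma$'' --- does not work: the union of the $\Gamma$-translates of a bounded set meeting every $W_i$ is in general unbounded, and nothing forces a single bounded set to be invariant under all of $\Gamma$. The paper uses the no-fixed-bounded-set hypothesis only to guarantee separators arbitrarily far apart (and to dispose of the case where $\TF$ has exactly two ends); the decisive contradiction comes from \emph{end-transitivity}, not from producing a fixed set. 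Concretely, the paper keeps the ``dominated'' configuration (a translation axis $R$ in $\TF$ all of whose separators contain a common vertex $x$), produces a second translation axis $P$ carrying an infinite pairwise disjoint family of separators (so its ends of $\TF$ correspond to non-local ends of $G$, and some $m$ bounds the number of vertices of $P$ containing any given vertex of $G$), and then splices translates of $P$ and of $R$ into a hybrid ray $Q$ of $\TF$ that still corresponds to a non-local end of $G$ but contains infinitely many runs of $m+1$ separators with a common vertex --- a pattern no tail of any translate of $P$ can exhibit. Transitivity on the non-local ends would force such a tail to exist, a contradiction. Nothing in your outline points toward this (or any other) mechanism for closing the case you isolate.

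Second, even your construction of a ray corresponding to $\eta$ is flawed. Lemma~\ref{TranslationExists} applied to $S_i\,B_i\,S_{i+1}\,B_{i+1}\,S_{i+2}$ gives a translation $g$ with $S_i^g\in\set{S_{i+1},S_{i+2}}$, but the axis of $g$ need not have $\eta$ as an end: $S_i^{g^m}$ converges to the forward end of the axis of~$g$, which may leave the ray to $\eta$ immediately after $S_i^g$, so ``$W_i^g=W_{i+c}$'' is unjustified and your ray $P$ may correspond to a different end of $\TF$ altogether. Relatedly, the claim $\bigcap_i V(W_i)=\es$ is false precisely in the dominated case (a vertex can lie in every wing without lying in any separator), so your uniqueness argument also breaks down there: a local end and a non-local end of $G$ can both correspond to the same end $\eta$ of $\TF$, and exhibiting one metric ray corresponding to $\eta$ would then not show that the given local end $\omega$ is non-local.
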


\begin{proof}
We first remark that for every $n\in\nat$ there is a pair of $\SF$-separators with distance at least $n$ as otherwise the union of all $\SF$-separators is a vertex set of finite diameter.
Let us suppose that there is an end of~$\TF$ that corresponds to a local end of~$G$.
Then there is a ray in~$\TF$ and a vertex $x$ of~$G$ such that $x$ lies in all the vertices of that ray as otherwise there are infinitely many disjoint $\SF$-separators on that ray and thus the end of~$\TF$ corresponds to a non-local end of~$G$.
Similar to Lemma~\ref{TranslExists} there is an $\alpha\in\Gamma_x$, in the stabilizer of~$x$ in~$\Gamma$, that acts on~$\TF$ like a translation and thus $x$ lies in all vertices of the uniquely determined $\alpha$-invariant double ray $R$.
If $\TF$ has just two ends, then all separators lie on~$R$ and thus the intersection of all the separators is non-empty, of finite diameter, and $\Gamma$-invariant, but no such vertex set exists by the assumptions.
Hence we know that $\TF$ has infinitely many ends.

Let $S_0,S_1,S_2$ be three distinct $\SF$-separators such that $S_0$ and $S_1$ lie on~$R$, such that there is an $n\in\nat$ with $S_0^{\alpha^n}=S_1$, such that $S_0$ and $S_2$ are disjoint and also $S_1$ and $S_2$ are disjoint, and such that the shortest path from $S_2$ to~$R$ meets $R$ in the intervall from $S_0$ to~$S_1$.
Let $g\in\Gamma$ with $S_0^g=S_2$.
If $g$ does not act like a translation on~$\TF$, then $\alpha^{-n}g$ acts like a translation on~$\TF$ and so in each case there is an automorphism $h$ of~$G$ that acts on~$\TF$ like a translation.
The ends of~$\TF$ defined by the unique $h$-invariant double ray $P$ has to be non-local ends of~$G$ since there is an infinite pairwise disjoint subsequence, namely $(S_2^{h^i})_{i\in\nat}$, of the $\SF$-separators on~$P$ for each subray.
Let $S_i$ be on~$P$ with $i\in\{0,1\}$.
Then there is an $m\le d(S_i,S_2)$ such that each vertex of~$G$ lies in at most $m$ vertices of~$P$.

We can construct a ray $Q$ in~$\TF$ with all double rays $P^\gamma$ and $R^\gamma$ with $\gamma\in\Gamma$ such that there are infinitely many subpaths of~$Q$ of length $m+1$ whose intersection is non-trivial but such that $Q$ defines a non-local end of~$G$:
This ray just have to contain at least $m+1$ vertices from $P^\gamma$, then continue on some $R^{\alpha^k\gamma}$ until there is some vertex disjoint to all the vertices on $P^\gamma$ and then again continuing on some $P^{\gamma'}$ for at least $m+1$ vertices.
By repeating this process infinitely many times we finally have constructed a ray in~$\TF$ that has to correspond to some non-local end of~$G$.
But this leads to a contradiction to the transitivity of~$\Gamma$ on the non-local ends of~$G$ and hence no local end of~$G$ can correspond to an end of~$\TF$.
\end{proof}

By changing the cut system to a cut system $\SF$ such that every wing of every separation just contains a ray we obtain the following corollary of the proof of Lemma~\ref{EndsOfTAreNonLocal}.

\begin{Cor}\label{LocEndTrans=>NoEndInTree}
Let $G$ be a connected graph with infinitely many local ends such that $\Gamma:=\Aut(G)$ acts transitively on the ends of~$G$.
Let $\SF$ be a basic cut system such that each $\SF$-separation separates rays.
Then either $\Gamma$ fixes no vertex set of finite diameter or $\TF$, the structure tree of~$G$ and $\SF$, does not contain any ray.\qed
\end{Cor}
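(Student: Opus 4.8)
The plan is to establish the dichotomy by proving its contrapositive in the following form: if $\Gamma$ \emph{does} fix a vertex set $W$ of finite diameter, then $\TF$ contains no ray. (If $\Gamma$ fixes no such set, the first alternative of the statement already holds, so nothing is left to prove.) Two observations will be used throughout. First, since automorphisms are isometries, being a local end is a $\Gamma$-invariant property; as $\Gamma$ is transitive on the ends and, by hypothesis, at least one end is local, \emph{every} end of $G$ is local. Second---and this is the ingredient that here takes the place of the opening remark in the proof of Lemma~\ref{EndsOfTAreNonLocal}---the separators are now \emph{uniformly bounded}. Indeed, because $\SF$ is basic, all $\SF$-separators form a single $\Gamma$-orbit and share a common finite diameter; and for any separator $S=S_0^{\gamma}$ we have $d(S,W)=d(S_0^{\gamma},W^{\gamma})=d(S_0,W)$ since $W^{\gamma}=W$, so every separator lies at one fixed distance from $W$. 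Hence all separators are contained in a ball $U$ about $W$ of finite diameter.

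Assume now, for contradiction, that $\TF$ contains a ray, and fix an end $\epsilon$ of $\TF$. Since every wing of every separation contains a ray, $\epsilon$ corresponds to an end of $G$, which is local. Automorphisms of $G$ induce automorphisms of $\TF$ that send vertices to vertices and ends to ends; so transitivity of $\Gamma$ on the ends of $G$ propagates the correspondence and forces \emph{every} end of $G$ to correspond to an end of $\TF$. In particular $\TF$ has infinitely many ends and $\Gamma$ acts transitively on them. I would then run the construction from the proof of Lemma~\ref{EndsOfTAreNonLocal}, reading ``ray'' for ``metric ray'' and ``local'' for ``non-local'' throughout: using Lemma~\ref{TranslationExists} I produce a translation of $\TF$ whose invariant double ray $P$ has pairwise disjoint separators, so that some $m\in\nat$ bounds the number of vertices of $P$ meeting any single vertex of $G$; then, mixing translates of $P$ with the common-vertex double rays associated with $\epsilon$, I assemble a ray $Q$ in $\TF$ that still corresponds to an end of $G$ but contains infinitely many subpaths of length $m+1$ through a common vertex. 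Transitivity of $\Gamma$ on the (now all local) ends transports the overlap bound $m$ to the end defined by $Q$, contradicting the overlap $m+1$. This shows $\TF$ is rayless.

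The step I expect to be the main obstacle is precisely the one at which the proof of Lemma~\ref{EndsOfTAreNonLocal} invoked the hypothesis that $\Gamma$ fixes no set of finite diameter. There, a double ray with pairwise disjoint separators automatically defined a non-local end, because its separators escaped every bounded region; under the present hypothesis the separators are confined to the finite-diameter set $U$, so the same double ray defines a \emph{local} end instead. The non-locality contradiction of the lemma is therefore unavailable, and the clash must be produced entirely from the overlap bound $m$ together with transitivity on the uniformly local ends. Verifying that a disjoint-separator axis still exists inside the bounded region $U$, and that the mixed ray $Q$ genuinely corresponds to a single end of $G$ so that transitivity applies to it, is the delicate part; granting these, the comparison of the overlaps $m$ and $m+1$ goes through exactly as in Lemma~\ref{EndsOfTAreNonLocal}, and no ray of $\TF$ can exist.
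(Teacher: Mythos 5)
You have attached the proof to the wrong implication, and the implication you attempt is in fact false. You prove ``if $\Gamma$ fixes a vertex set $W$ of finite diameter, then $\TF$ is rayless'', which is indeed the nontrivial half of the disjunction \emph{as printed} --- but the printed statement contains a polarity slip. For Corollary~\ref{LocEndTrans=>NoEndInTree} to combine with Corollary~\ref{LocEnds=>TreeNoFiniteDiam} into Theorem~\ref{MTm10Local}, and for it to be, as the paper says, a corollary of the \emph{proof} of Lemma~\ref{EndsOfTAreNonLocal} (whose standing hypothesis is that $\Gamma$ fixes \emph{no} vertex set of finite diameter), its content must be: either $\Gamma$ fixes \emph{a} vertex set of finite diameter, or $\TF$ contains no ray; that is, the substantive implication runs ``$\Gamma$ fixes no bounded vertex set $\Rightarrow$ $\TF$ is rayless''. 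The implication you chose is refuted by a concrete example: let $G$ be obtained from the $3$-regular tree $T$ by adding one new vertex $v$ joined to every vertex of $T$. Then $\diam(G)=2$, so $G$ has no metric rays and every end is local; the ends of $G$ are exactly those of $T$, so there are infinitely many, $\Aut(G)\cong\Aut(T)$ is transitive on them and fixes the bounded set $\set{v}$; the minimal cuts have separators $\set{v,t}$ with $t\in VT$, and the resulting structure tree contains rays (it is essentially $T$). So under your hypothesis the conclusion simply fails, and note that in this example \emph{both} disjuncts of the printed statement are false, which confirms the typo.

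Your sketch in fact stalls exactly where it must: the step you flag as delicate --- finding a translation axis in $\TF$ with pairwise disjoint separators inside the bounded region $U$ --- is unavailable in the example above, since any two $\SF$-separators meet (they all contain $v$); hence no overlap bound $m$ ever materialises and no contradiction can be extracted, correctly so, because $\TF$ does have rays there. This is not a gap that can be bridged: the opening remark of Lemma~\ref{EndsOfTAreNonLocal} (separators at unbounded pairwise distance), the disposal of the two-ended case, and the choice of $S_2$ disjoint from $S_0$ and $S_1$ all \emph{use} the hypothesis that $\Gamma$ fixes no bounded set, which you have negated. The paper's argument runs the other way round: assume $\Gamma$ fixes no vertex set of finite diameter, rerun the proof of Lemma~\ref{EndsOfTAreNonLocal} with ``ray'' in place of ``metric ray'' and ``end'' in place of ``non-local end'', and conclude that no end of $\TF$ corresponds to a local end of $G$; since, as you correctly observe at the outset, transitivity forces all ends of $G$ to be local, $\TF$ can then have no end at all, i.e.\ no ray. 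Your mixing construction for the ray $Q$ and the $m$ versus $m+1$ overlap clash are the right ingredients, but they belong to that implication, not to the one you set out to prove.
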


By the next lemma we show that every structure tree of a basic cut system that contains a ray is essentially a semi-regular tree.

\begin{Lem}\label{nearlySemi-regular}
Let $G$ be a connected graph with infinitely many non-local ends such that $\Gamma=\Aut(G)$ acts transitively on the non-local ends of~$G$ and fixes no vertex set of finite diameter.
Let $\SF$ be a basic cut system of~$G$ such that each $\SF$-separation separates non-local ends and let $\TF$ be the structure tre of~$G$ and $\SF$.
If $\TF$ contains some ray, then the set of $\SF$-blocks consists of at most two $\Gamma$-orbits.

In particular then there are two different cases: either $\Gamma$ has precisely two orbits on $V\TF$, or the separator vertices in $\TF$ have degree $2$ and there are precisely three $\Gamma$-orbits on~$V\TF$.
\end{Lem}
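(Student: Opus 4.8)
The plan is to reduce the statement to a purely tree-theoretic fact about the action of $\Gamma$ on $\TF$ and then exploit the translations provided by Lemma~\ref{TranslExists}. First I would record two transitivity properties. Since $\SF$ is basic, all $\SF$-separators lie in one $\Aut(G)$-orbit, so $\Gamma$ acts transitively on the separator-vertices $\WF$ of $\TF$. Moreover $\Gamma$ acts transitively on the ends of $\TF$: sending an end of $\TF$ to the end of $G$ corresponding to it is canonical and therefore $\Gamma$-equivariant; by Lemma~\ref{EndsOfTAreNonLocal} its values are non-local ends only, it is injective because two distinct ends of $\TF$ are separated by some $\SF$-separator $S$ on the line between them and hence correspond to ends lying eventually in different components of $G-S$, and its image is $\Gamma$-invariant and non-empty (the assumed ray of $\TF$ produces an end). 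As $\Gamma$ is transitive on the non-local ends of $G$, the image is all of them and $\Gamma$ is transitive on the ends of $\TF$. Finally, since every $\SF$-separation separates non-local ends, both its wings contain a metric ray, so no block is a leaf; thus every vertex of $\TF$ has degree at least $2$ and both sides of the bipartition are infinite.

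Next I would feed long paths of separators into Lemma~\ref{TranslExists} with $A=\WF$. Every path $x_0\ldots x_4$ of length $4$ between two separators then carries a translation $g$ with $x_0^g=x_2$ or $x_0^g=x_4$; in either case its axis is a double ray on which the blocks recur with period at most $2$ (a shift by $2$ or by $4$ in the tree metric). The decisive dichotomy is this: if some line through separators carries a translation of shift $2$, then all blocks on that line lie in a single $\Gamma$-orbit, and in particular the two blocks adjacent to a common separator on the line are identified; whereas a translation of shift $4$ only forces the blocks along the line to alternate between (at most) two orbits.

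The heart of the argument is to turn this local period-$2$ information into a global bound of two block-orbits. Here I would fix a reference axis $D$ and, given an arbitrary block $C$, use end-transitivity to map an end beyond $C$ to an end of $D$; since two rays to a common end of a tree eventually coincide, the image of $C$ is carried onto a ray that merges with $D$, and sliding along $D$ with the translation of $D$ then places a translate of $C$ among the blocks of $D$. Concluding that $C$ therefore lies in one of the at most two orbits seen on $D$ --- that is, checking that the local period-$2$ colourings attached to all lines are globally consistent and cannot be ``frustrated'' --- is the step I expect to be the main obstacle; it is exactly where both the transitivity on ends and the existence of translations are needed simultaneously. The outcome is that the $\SF$-blocks form at most two $\Gamma$-orbits.

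It remains to derive the ``in particular''. If the blocks form a single orbit, then together with the orbit $\WF$ this gives exactly two $\Gamma$-orbits on $V\TF$. If instead there are two distinct block-orbits $\mc O_1\ne\mc O_2$, I would first observe that no line can carry a shift-$2$ translation: by the dichotomy above such a line would be monochromatic, and the transport argument of the previous paragraph would then force every block into a single orbit, contradicting $\mc O_1\ne\mc O_2$. Hence, by Lemma~\ref{TranslExists}, every line carries a shift-$4$ translation and the two block-orbits strictly alternate along every line. Now a separator $S$ of degree at least $3$ would have three neighbouring blocks $B_1,B_2,B_3$; for each pair the line through $S$ joining ends beyond the two blocks realises them as consecutive blocks on a line, so the pair lies in different orbits --- impossible for three blocks with only two orbits available. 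Therefore every separator has degree $2$, and the three classes $\WF,\mc O_1,\mc O_2$ are exactly the $\Gamma$-orbits on $V\TF$. This establishes both the bound of two block-orbits and the stated distinction of cases.
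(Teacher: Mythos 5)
Your reduction to the action on $\TF$ is sound (transitivity on the separator vertices because $\SF$ is basic, transitivity on the ends of $\TF$ via Lemma~\ref{EndsOfTAreNonLocal} and end-transitivity of $\Gamma$ on $G$), and so is the application of Lemma~\ref{TranslExists} with $A=\WF$. But the step you yourself flag as ``the main obstacle'' is a genuine gap, and the transport argument you sketch for it does not close it: if $\alpha$ maps an end of $\TF$ lying beyond the block $C$ to an end of the reference axis $D$, you only learn that the ray through $C^\alpha$ \emph{eventually} coincides with $D$; the vertex $C^\alpha$ itself may sit on the initial segment before the two rays merge, and sliding along $D$ with its translation never moves it onto $D$. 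So nothing places a translate of $C$ among the blocks appearing on $D$, and the conclusion ``at most two orbits'' is asserted rather than proved. The same unproved consistency statement is then reused in the ``in particular'' part: a shift-$4$ translation only shows that the blocks at mutual distance $4$ along a line fall into at most two classes, not that consecutive blocks lie in \emph{different} orbits, so the strict alternation you invoke to force the separators to have degree $2$ is not justified either.

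The paper sidesteps all of this by splitting on the degree of the separator vertices in $\TF$. If every $\SF$-separator lies in at most two $\SF$-blocks, then transitivity on the separators alone already gives at most two orbits on blocks (every block is adjacent to some separator, and a single separator sees at most two blocks). If every separator lies in at least three blocks, the branching is used to produce, for a putative orbit $\XF$ of blocks that is not everything, one ray of $\TF$ meeting $\XF$ in every other block and a second ray avoiding $\XF$ entirely; since both corresponding ends of $G$ are non-local by Lemma~\ref{EndsOfTAreNonLocal} and $\Gamma$ is transitive on the non-local ends, some automorphism maps one of these ends of $\TF$ to the other, forcing the tail of the second ray to meet $\XF$ --- a contradiction. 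If you want to keep your translation-based setup you still need an argument of this comparative kind, pitting two ends with incompatible orbit patterns against end-transitivity; the local period information along a single axis will not globalise by itself.
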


\begin{proof}
Let us first suppose that every $\SF$-separator lies in at most $2$ $\SF$-blocks.
Then there are at most two $\Gamma$-orbits on $\SF$.
Thus there are at most two $\Gamma$-orbits on the set of $\SF$-blocks.

Let us now suppose that every $\SF$-separator lies in at least $3$ distinct $\SF$-blocks.
If there are at least two $\Gamma$-orbits on the set of $\SF$-blocks, we can construct two rays $R$ and $P$ such that the ends $\omega_R$ and $\omega_P$ defined by $R$ and $P$, respectively, are not in the same $\Gamma$-orbit:
There is a ray $R$ such that every fourth vertex lies in some $\Gamma$-orbit $\XF$ of $\SF$-blocks which is avoided completely by a second ray $P$.
As the ends $\omega_R$ and $\omega_P$ of $\TF$ corresponds uniquely to some non-local ends $\widehat{\omega}_R$ and $\widehat{\omega}_P$ of~$G$ by Lemma~\ref{EndsOfTAreNonLocal} and $\Gamma$ acts transitively on the non-local ends of~$G$, there is some $\alpha$ with $\widehat{\omega}_R^\alpha=\widehat{\omega}_P$ and thus $\omega_R^\alpha=\omega_P$.
As $\TF$ is a tree, there has to be some vertex in $\XF$ on $P$, in particular every fourth vertex of~$P$ must be an element of~$\XF$.
Since this is not the case, we get a contradiction.
\end{proof}

\begin{Lem}\label{TFHasEnd}
Let $G$ be a connected graph with infinitely many non-local ends such that $\Gamma=\Aut(G)$ acts transitively on the non-local ends of $G$.
If there is no vertex set of finite diameter invariant under $\Gamma$, then the structure tree $\TF$ of~$G$ and any basic cut system $\SF$ has infinitely many ends.
\end{Lem}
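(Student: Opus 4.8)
The plan is to exploit that a basic cut system forces $\Gamma$ to act transitively on one class of the natural bipartition of $\TF$, so that Lemma~\ref{TranslationExists} manufactures a translation of $\TF$ as soon as $\TF$ is ``long enough'', and then to bound the number of non-local ends of $G$ that can correspond to a single end of $\TF$.

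First I would record two facts. Since $\SF$ is basic, all its separators lie in a single $\Aut(G)$-orbit, so the image of $\Gamma$ in $\Aut(\TF)$ acts transitively on the set $\WF$ of $\SF$-separators, which is exactly one class of the (unique) bipartition $\WF\cup\BF$ of the tree $\TF$. Moreover $\WF$ is $\Gamma$-invariant and every separator is finite of the common order $n$ of the minimal cut system; hence if $\bigcup\WF$ had finite diameter it would be a $\Gamma$-invariant vertex set of finite diameter, contradicting the hypothesis. Thus $\TF$ is infinite and, for every $m\in\nat$, there are two $\SF$-separators at distance at least $m$ in $G$.

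The core step is to produce an end of $\TF$. If $\TF$ contains a path $x_0\ldots x_4$ of length $4$ between two separators, then Lemma~\ref{TranslationExists}, applied with $A=\WF$, yields a $g\in\Gamma$ acting on $\TF$ as a translation; its axis is a $g$-invariant double ray, so $\TF$ has at least two ends. It therefore suffices to exclude the case that no two separators are at $\TF$-distance $4$, i.e.\ that every two separators are at $\TF$-distance $2$. Since $\TF$ is a tree, pairwise $\TF$-distance $2$ among the separators forces a common block $X$ adjacent to all of them, whence every other block is a leaf, $\TF$ has diameter at most $4$ and is rayless, and $X$ is the unique block of degree at least $2$ and so $\Gamma$-invariant. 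I expect this \emph{generalized star} to be the main obstacle. To rule it out I would show that $X$ itself carries a non-local end: since $V(X)$ is $\Gamma$-invariant it must have infinite diameter, and the unbounded, $\Gamma$-transitive family of separators sitting inside $X$ should force a metric ray within $X$. Granting this, some non-local end $\widehat\omega$ corresponds to the vertex $X$ of $\TF$; but by the choice of $\SF$ in Example~\ref{Example} each wing of each separation also contains a metric ray, so some non-local end $\widehat\omega'$ corresponds to a leaf block $Y\ne X$. As the correspondence from the ends of $G$ to $V\TF\cup\mathrm{ends}(\TF)$ is $\Gamma$-equivariant and $\Gamma$ is transitive on the non-local ends, any $g$ with $\widehat\omega^{\,g}=\widehat\omega'$ would satisfy $X^g=Y$, contradicting the invariance of $X$. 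Hence the generalized star cannot occur and $\TF$ has an end. Establishing the italicised sub-claim (a metric ray inside the invariant block $X$, extracted from the displacement of the transitive, unbounded separator family) is the one genuinely delicate point, and it is where the no-finite-diameter hypothesis really has to be converted into escaping geometry.

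Finally I would upgrade ``an end'' to ``infinitely many ends''. By Lemma~\ref{EndsOfTAreNonLocal} every end of $\TF$ corresponds to a non-local end of $G$, so some non-local end corresponds to an end of $\TF$; by the $\Gamma$-equivariance of the correspondence together with transitivity, \emph{every} non-local end of $G$ then corresponds to an end of $\TF$. Now I claim at most $n$ non-local ends correspond to any fixed end $\varepsilon$ of $\TF$. Indeed the rays in such ends lie eventually beyond every separator of a ray of $\varepsilon$, hence cross each such separator $S$; if $\omega_1,\dots,\omega_k$ are distinct, then past a sufficiently remote $S$ their pairwise disjoint tails meet $S$ in distinct vertices, whence $k\le|S|=n$. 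Consequently, if $\TF$ had only finitely many ends, $G$ would have only finitely many non-local ends, contrary to hypothesis. Therefore $\TF$ has infinitely many ends, as claimed.
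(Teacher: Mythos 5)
Your overall architecture mirrors the paper's: reduce to producing a single end of $\TF$, rule out the ``generalized star'' by showing its invariant central block $X$ would have to contain a non-local end (contradicting end-transitivity, since some other block also meets a non-local end), and then upgrade one end of $\TF$ to infinitely many via equivariance. Your route to the dichotomy is fine and arguably cleaner than the paper's: a pair of separators at $\TF$-distance $\ge 4$ feeds Lemma~\ref{TranslExists} and yields a translation axis, so only the star configuration remains. However, the sub-claim you explicitly defer --- that the $\Gamma$-invariant block $X$ of infinite diameter must contain a metric ray --- is not a routine verification; it is the substance of the lemma and occupies most of the paper's proof, so as written your argument has a genuine gap. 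Infinite diameter of $X$ alone does not produce a metric ray (one needs the escaping geometry to be organized into a single ray whose vertices do not accumulate at bounded distance). The paper does this by fixing a separator $S$, calling a component $C$ of $X-B_r(S)$ good if the separators inside $C$ are at unbounded distance from $S$, showing via transitivity on separators that a good component always exists, and then splitting into two cases: if for some $r$ two components are good, one builds a nested sequence of good components $C_i$ with separators $S_i\sub C_{i-1}$ at distance $>r$ from $X-C_{i-1}$ and concatenates paths between chosen vertices $x_i\in S_i$; if for every $r$ the good component is unique, the components are nested automatically and the same concatenation works. In both cases the resulting ray is metric because only finitely many of its vertices lie within any fixed distance of the start. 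You would need to supply an argument of this kind.

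A secondary point: your final counting step (``at most $n$ non-local ends correspond to a fixed end $\varepsilon$ of $\TF$'') is justified only loosely --- a tail of a ray that eventually lies beyond a separator $S$ need not meet $S$ unless you first arrange that the tail begins on the near side of $S$, which requires knowing the separators along the ray of $\varepsilon$ eventually avoid any fixed finite set. This can be repaired, but the paper's argument is simpler and avoids it entirely: the map from ends of $\TF$ to non-local ends of $G$ is well defined and $\Gamma$-equivariant, so the infinite $\Gamma$-orbit of the non-local end corresponding to one end of $\TF$ pulls back to infinitely many distinct ends of $\TF$. I would substitute that.
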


\begin{proof}
We just have to prove that $\TF$ has some end $\omega$.
If this is the case, then we know that the non-local end $\widehat{\omega}$ of~$G$ corresponding to $\omega$ has infinitely many images under $\Gamma$ and thus also $\omega$ must have infinitely many images under $\Gamma$ as any end of~$\TF$ corresponds to precisely one non-local end of~$G$.

So let us suppose that $\TF$ has no end.
As $\Gamma$ acts transitively on the separator vertices of~$\TF$, the diameter of $\TF$ is at most $4$.
If the diameter is $2$, then there is a unique $\SF$-separator $S$ in~$G$ and thus $S$ is a vertex set of finite diameter invariant under $\Gamma$ in contradiction to our assumption.
Hence we know that $\TF$ has diameter $4$.
Our aim is to show that also in this case the vertex set of all those vertices that lie in any $\SF$-separator is a vertex set of finite diameter.

Let $X$ be that $\SF$-block that is in~$\TF$ adjacent to all separator vertices.
We will prove that $X$ contains some non-local end of~$G$.
As $G$ contains infinitely many non-local ends and $\Gamma$ acts transitively on those ends, all non-local ends must lie in~$X$ as $X$ is uniquely determined in~$\TF$.
But as the separations are chosen so that in both wings there are non-local ends, there is some vertex in~$\TF$ different to $X$ that contains a non-local end of~$G$, a contradiction.

Since $\Gamma$ fixes no vertex set of finite diameter, for every $\SF$-separator $S$ and every $r\in\nat$ there is an $\SF$-separator $S'$ with $d_X(S,S')=d(S,S')\ge r$.
Let us say that a component $C$ of $X-B_r(S)$ has the property $(*)$ if
\begin{itemize}
\item[$(*)$] the $\SF$-separators in~$C$ have unbounded distance to $S$.
\end{itemize}
In a first step we show that for any $r>0$ there is a component $C$ of $X-B_r(S)$ with property $(*)$.
So let us assume that there is an $r>0$ such that in each component of $X-B_r(S)$ all $\SF$-separators have bounded distance to~$S$.
Let $S'$ be an $\SF$-separator with $d(S,S')\ge 2r$.
Then $X-B_r(S')$ contains a component $C$ with the property $(*)$ with respect to $S'$ instead of $S$, a contradiction to $S'=S^\alpha$ for some $\alpha\in\Gamma$.
Thus for every $r>0$ there is a component $C$ of $X-B_r(S)$ with property $(*)$.

If on the other hand there is an $r>0$ such that two components $C_1,C_2$ of $X-B_r(S)$ have the property $(*)$, we construct a metric ray in $X$ and thereby show that $X$ have to contain a non-local end.
Let $S_0$ be an $\SF$-separator. Assuming that we have already chosen $\SF$-separators $S_j$ and components $C_j$ of $X-B_r(S_j)$ with $C_j\sub C_{j-1}$ for $j<i$, let $S_i$ be an $\SF$-separator in $C_{i-1}$ with $d(S_i,X-C_{i-1})>r$.
Then there are at least two components of $X-B_r(S_i)$ with $(*)$.
One of those has to lie completely in $C_{i-1}$.
Let $C_i$ be that component.
Fix some vertex $x_i\in S_i$ and let $R_i$ be a path from $x_{i-1}$ to $x_i$.
Then there is a ray $R$ in the union of all the $R_i$.
This ray has to be a metric ray as there are only finitely many vertices on the $R_i$ that have distance smaller than $nr$ for all $n\in\nat$.
Thus $X$ contains some non-local end.

Let us finally suppose that for all $r>0$ there is precisely one component $C_r$ of $X-B_r(S)$ with $(*)$.
Then $C_{r+1}\sub C_r$ for all $r$.
Let $S_i$ be some $\SF$-separator with $d(S,S_i)>i$, and let $x_i$ be some vertex of $S_i$ and $R_i$ some path from $x_i$ to $x_{i+1}$.
Then there is a ray $R$ in the union of all the paths $R_i$.
Again $R$ has to be a metric ray and thus $X$ contains a non-local end of $G$.

So in all cases we either got directly a contradiction or some non-local end in $X$ which also leads to a contradiction as indicated before.
Thus the lemma is proved.
\end{proof}

By replacing the cut system we used for Lemma~\ref{TFHasEnd} by a cut system such that each separation separates local ends we obtain the following corollary.

\begin{Cor}\label{LocEnds=>TreeNoFiniteDiam}
If $G$ is a connected graph with infinitely many local ends such that the automorphism group of~$G$ acts transitively on the ends of~$G$, then either $\Gamma$ fixes a vertex set of finite diameter or any structure tree of~$G$ and of a basic cut system $\SF$ such that each $\SF$-separation separates ends of~$G$ has a ray.\qed
\end{Cor}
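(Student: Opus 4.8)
The plan is to rerun the proof of Lemma~\ref{TFHasEnd} with ``ray'' in place of ``metric ray'' and ``end of $G$'' in place of ``non-local end of $G$'', now working with a basic cut system $\SF$ in which every separation separates ends. Such an $\SF$ exists by Theorem~\ref{basicTreeExists}: one takes the minimal cut system of Example~\ref{Example} in its non-parenthetical form, in which both wings of every cut contain a ray. Arguing contrapositively, I would assume that $\Gamma$ fixes no vertex set of finite diameter and aim to produce a ray in $\TF$; so suppose for contradiction that $\TF$ is rayless. Since $\Gamma$ acts transitively on the $\SF$-separators, the reasoning of Lemma~\ref{TFHasEnd} gives $\diam\TF\le 4$, and unless $\diam\TF=4$ there is a unique $\SF$-separator $S$. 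In that case $S$ is a finite, $\Gamma$-invariant vertex set, so $\Gamma$ fixes a vertex set of finite diameter and the corollary holds. Hence it suffices to treat $\diam\TF=4$, where the centre of $\TF$ is a single block $X$; this block is fixed by $\Gamma$ and, being the centre (so of eccentricity $2$), is adjacent in $\TF$ to every $\SF$-separator, whence every $\SF$-separator is contained in $X$.

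In this case I would show that the subgraph $X$ contains a ray. Because $\Gamma$ fixes no vertex set of finite diameter, the union of all $\SF$-separators does not have finite diameter, so for every separator $S$ and every $r\in\nat$ there is a separator $S'\sub X$ with $d_X(S,S')=d(S,S')\ge r$. The selection from the proof of Lemma~\ref{TFHasEnd} then applies: one chooses separators $S_0,S_1,\dots\sub X$ and nested components $C_i\sub C_{i-1}$ of $X-B_r(S_i)$, and joins chosen vertices $x_i\in S_i$ to $x_{i+1}\in S_{i+1}$ by paths inside $X$, obtaining a ray $R\sub X$. Here I need only that $R$ is a ray, not that it is metric, so this is a strict weakening of what is done there. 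Since $R\cap X$ is infinite, $R$ corresponds to the block $X$, and therefore $X$ contains an end $\widehat\omega$ of $G$.

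Finally I would derive the contradiction from transitivity. As $\Gamma$ acts transitively on all ends of $G$ and fixes the block $X$, the orbit of $\widehat\omega$ is the whole end space, so every end of $G$ corresponds to $X$. On the other hand, choose any $(A,B)\in\SF$; it separates ends, its separator $A\cap B$ is finite and (being adjacent to $X$ in $\TF$) borders $X$, and each of $G[A]$ and $G[B]$ contains a ray. The block $X$ lies on one side of $A\cap B$, say $V(X)\sub A$, so a ray in $G[B]$, a tail of which avoids the finite separator, defines an end corresponding to a vertex or end of $\TF$ on the $B$-side, hence different from $X$ --- contradicting that every end corresponds to $X$. This contradiction shows that $\TF$ must contain a ray. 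The only substantive step is the construction of the ray inside $X$ in the case $\diam\TF=4$; every other step is bookkeeping, and that construction is already available --- in its stronger, metric form --- from the proof of Lemma~\ref{TFHasEnd}.
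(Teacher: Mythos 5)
Your proposal is correct and follows essentially the same route as the paper, which obtains this corollary precisely by rerunning the proof of Lemma~\ref{TFHasEnd} with a basic cut system whose separations separate (local) ends, replacing ``metric ray'' by ``ray'' and ``non-local end'' by ``end'' throughout. Your spelled-out version of the diameter-$4$ case, the ray construction inside the central block $X$, and the final contradiction via end-transitivity are exactly the intended adaptation.
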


A direct consequence of the Corollaries~\ref{LocEndTrans=>NoEndInTree} and \ref{LocEnds=>TreeNoFiniteDiam} is the following theorem.

\begin{Tm}\label{MTm10Local}
Let $G$ be a connected graph with infinitely many ends such that its automorphism group acts transitively on the ends of the graph.
If all ends are local ends, then there is a vertex set of finite diameter that is fixed by $\Aut(G)$.\qed
\end{Tm}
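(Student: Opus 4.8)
The plan is to argue by contradiction and to let the two preceding corollaries collide on one and the same structure tree. So suppose, contrary to the assertion, that $\Gamma:=\Aut(G)$ fixes no vertex set of finite diameter. Since $G$ is connected with infinitely many ends, Theorem~\ref{VertexCuts} supplies a basic cut system of~$G$. Because every end of~$G$ is local, I would first arrange that a single basic cut system $\SF$ witnesses both hypotheses that are needed below, namely that each $\SF$-separation separates ends of~$G$ \emph{and} that each $\SF$-separation separates rays. Writing $\TF$ for the structure tree of~$G$ and~$\SF$, this $\TF$ is the object on which the contradiction will be produced.

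With this $\SF$ fixed, the two corollaries pull in opposite directions. On the one hand, $\SF$ separates ends, so Corollary~\ref{LocEnds=>TreeNoFiniteDiam} applies: the assumption that $\Gamma$ fixes no vertex set of finite diameter excludes its first alternative and thus forces $\TF$ to contain a ray. On the other hand, the very same $\SF$ separates rays, so Corollary~\ref{LocEndTrans=>NoEndInTree} applies to the same tree: again the assumption on~$\Gamma$ excludes the first alternative and now forces $\TF$ to contain no ray. A tree cannot both have and lack a ray, so the standing assumption is untenable and $\Gamma$ must fix some vertex set of finite diameter, which is the claim. Logically this is just the distribution $(\neg P\vee Q)\wedge(\neg P\vee\neg Q)=\neg P$, with $P$ the statement that $\Gamma$ fixes no set of finite diameter and $Q$ the statement that $\TF$ has no ray.

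The only genuine work — and the step I expect to be the main obstacle — is the reduction carried out in the first paragraph: showing that when all ends of~$G$ are local, a single basic cut system can play the role of the cut system in both corollaries, i.e. that ``each separation separates rays'' and ``each separation separates ends'' are then one and the same requirement. The reason is that locality closes the gap between rays and ends: every ray of~$G$ lies in a local end, and every end of~$G$, being local, contains a ray, so for a separation of the critical order in the sense of Example~\ref{Example} a wing contains a ray exactly when it contains an end living on that side. Hence the minimal cut system of Example~\ref{Example} built from rays coincides with the one built from ends, and any basic subsystem~$\SF$ of it simultaneously separates rays and separates ends. Once this identification is in place, the two corollaries genuinely speak about the same~$\TF$ and the contradiction above is immediate; everything remaining is the purely propositional combination of the two dichotomies.
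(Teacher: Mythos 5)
Your argument is exactly the paper's: Theorem~\ref{MTm10Local} is stated there as a direct consequence of Corollaries~\ref{LocEndTrans=>NoEndInTree} and~\ref{LocEnds=>TreeNoFiniteDiam}, applied (as you do) to a single basic cut system that simultaneously separates rays and ends --- which any basic cut system automatically does, since a wing of a finite-order separation contains a ray if and only if some end of~$G$ lives in it, so the unification step you single out as the main obstacle is immediate. Be aware only that you are silently correcting the literal wording of Corollary~\ref{LocEndTrans=>NoEndInTree}, whose first alternative must be read as ``$\Gamma$ fixes a vertex set of finite diameter'' (rather than ``fixes no vertex set'') for your identity $(\neg P\vee Q)\wedge(\neg P\vee\neg Q)=\neg P$ --- and indeed for the paper's own deduction --- to go through; with that intended reading your proof is complete and coincides with the paper's.
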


Motivated by the fact that for any graph $G$ with the assumptions on its non-local ends as in this section we have that its structure tree is either a semi-regular tree or a subdivided semi-regular tree, we show in Section~\ref{UniquenessSection} that the semi-regular tree is uniquely determined up to subdivision for each such graph.

\section{Metric ends of end-transitive graphs}

In this section we will show that for every connected graph with infinitely many non-local ends such that no vertex set of finite diameter is fixed by its automorphism group it is equivalent that its automorphism group acts transitively on the non-local ends or on the metric ends of the graph.
Furthermore if the automorphism group of such a graph $G$ is transitive on the non-local ends or on the metric ends, then all non-local ends of~$G$ are thin global ends of~$G$.

Throughout this section let $G$ be a connected graph with infinitely many non-local ends such that its automorphism group $\Gamma$ acts transitively on the non-local ends of~$G$ and such that no vertex set of finite diameter is fixed by~$\Gamma$.
Furthermore let $\SF$ be a basic cut system such that each $\SF$-separation separates non-local ends, and let $\TF$ be the structure tree of~$G$ and $\SF$.

\begin{Lem}\label{Non-LocEndsAreGlobal}
Any thin global end of~$G$ corresponds to an end of~$\TF$ and vice versa.
In particular all non-local ends are global ends.
\end{Lem}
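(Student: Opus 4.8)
The plan is to split the statement into three parts: (A) every end of $\TF$ corresponds to a thin global end of~$G$ (the ``vice versa'' direction); (B) every non-local end of~$G$ corresponds to an end of~$\TF$; and then to read off the remaining claims from these. Part~(B) is where the real leverage lies, and I would prove it by an equivariance argument. Since $\SF$ is a basic, hence $\Aut(G)$-invariant, cut system, every $\alpha\in\Gamma$ permutes the $\SF$-separators and $\SF$-blocks and preserves adjacency in~$\TF$, so $\Gamma$ acts on~$\TF$ by automorphisms, mapping blocks to blocks and ends of~$\TF$ to ends of~$\TF$. The correspondence between ends of~$G$ and vertices or ends of~$\TF$ is then $\Gamma$-equivariant, so ``corresponds to an end of~$\TF$'' is a $\Gamma$-invariant property of the non-local ends of~$G$. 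By Lemma~\ref{TFHasEnd} the tree $\TF$ has an end, and by Lemma~\ref{EndsOfTAreNonLocal} this end corresponds to a non-local end of~$G$; thus at least one non-local end of~$G$ corresponds to an end of~$\TF$. As $\Gamma$ acts transitively on the non-local ends, every non-local end of~$G$ corresponds to an end of~$\TF$, and in particular none corresponds to a block.

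For part~(A), I would fix an end $\omega$ of~$\TF$ and let $\widehat\omega$ be the non-local end of~$G$ to which it corresponds by Lemma~\ref{EndsOfTAreNonLocal}. From the proof of Lemma~\ref{EndsOfTAreNonLocal} I would extract that, because $\widehat\omega$ is non-local, a ray $P$ of~$\TF$ converging to~$\omega$ carries infinitely many pairwise disjoint $\SF$-separators $S_0,S_1,S_2,\dots$; otherwise some vertex of~$G$ would lie on almost all separators of~$P$ and $\omega$ would correspond to a local end. All separators of a minimal cut system share the same finite order~$n$. Thinness is then immediate, since any family of pairwise disjoint rays in~$\widehat\omega$ all cross a common $S_i$ and hence numbers at most~$n$. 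For globality, let $R$ be any ray in~$\widehat\omega$. By the definition of correspondence $R$ lies eventually beyond every~$S_i$, so the region $U_i$ between $S_i$ and~$S_{i+1}$ meets $R$ in only finitely many vertices; moreover any $U_i$--$U_j$ path must pass through the disjoint separators $S_{i+1},\dots,S_j$, giving $d(U_i,U_j)\ge j-i-1$. Hence any ball meets only boundedly many of the~$U_i$ and therefore only finitely many vertices of~$R$, so $R$ is a metric ray and $\widehat\omega$ is global.

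With (A) and (B) in hand the full statement follows. A thin global end is in particular global, hence contains a metric ray and is non-local, so by~(B) it corresponds to an end of~$\TF$; conversely by~(A) every end of~$\TF$ corresponds to a thin global end of~$G$, which is the ``vice versa''. Finally, ``all non-local ends are global'' is obtained by combining~(B), that each non-local end corresponds to an end of~$\TF$, with~(A), that such ends are global. I would also note that the hypothesis ``thin'' is not actually needed for the forward implication, since globality already forces non-locality.

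The step I expect to be the main obstacle is the globality estimate in part~(A): one must convert the purely combinatorial fact that $R$ crosses infinitely many disjoint separators into the metric statement that no ball contains infinitely many vertices of~$R$. The two delicate points are verifying that each region $U_i$ contains only finitely many vertices of~$R$, which relies on $R$ lying eventually beyond every~$S_i$, and that disjointness of the~$S_i$ yields the linear lower bound $d(U_i,U_j)\ge j-i-1$; both are elementary but must be set against the precise definitions of ``corresponds to an end of~$\TF$'' and of a metric ray.
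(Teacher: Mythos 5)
Your proposal is correct, and its overall architecture --- secure the correspondence for one end, spread it to all non-local ends by the transitivity of $\Gamma$ together with the $\Gamma$-equivariance of the correspondence (which holds since $\SF$ is $\Aut(G)$-invariant), and witness thinness and globality by an infinite family of pairwise disjoint nested $\SF$-separators --- is the same as the paper's. The genuine difference is where the disjoint separators come from. The paper argues globally: if all $\SF$-separators pairwise met, their union would be a $\Gamma$-invariant vertex set of finite diameter (excluded by hypothesis), so there are disjoint separators $S_1,S_2$; a translation $g\in\Gamma$ obtained as in Lemma~\ref{TranslExists} with $S_1^g=S_2$ or $S_1^g=S_3$ then yields a single periodic sequence $(S_1^{g^i})_{i\in\nat}$ of pairwise disjoint separators each separating its predecessor from its successor, and the deduction of thinness, globality and the propagation to all ends of $\TF$ is left implicit. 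You instead work locally at an arbitrary end of $\TF$: via Lemma~\ref{EndsOfTAreNonLocal} and a pigeonhole dichotomy (either the separators on a ray of $\TF$ contain an infinite disjoint subfamily, or some vertex lies in infinitely many of them and every ray corresponding to that end fails to be metric, making the end local), you extract the disjoint family on every ray of $\TF$, which lets you carry out the thinness count and the estimate $d(U_i,U_j)\ge j-i-1$ uniformly without invoking a translation at this point. Both routes are sound; yours is more self-contained for the ``vice versa'' direction and makes explicit exactly the two steps the paper omits, at the price of having to verify the dichotomy (a maximal disjoint subfamily, if finite, forces some vertex into infinitely many separators) and the fact that no vertex lies beyond all of the $S_i$, so that the slabs between consecutive $S_i$ really capture the whole tail of $R$ in your ball-counting argument --- both of which follow readily from the disjointness and nestedness of the $S_i$ and the finiteness of their common order.
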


\begin{proof}
By Lemma~\ref{TFHasEnd} the structure tree $\TF$ has infinitely many ends.
We will show that there is a sequence of separations $(A_i,B_i)\in\SF$ such that $A_i\sub A_{i+1}$ and $A_i\cap B_{i+1}=\es$ for all $i\in\nat$.
Suppose that this is not the case.
If any two distinct $\SF$-separators are not disjoint, then the set of all those vertices that lie in any $\SF$-separator is a vertex set of finite diameter, its diameter is bounded by $2\cdot\diam(S)$ for any $\SF$-separator $S$.
Thus we may assume that there are two disjoint $\SF$-separators $S_1,S_2$.
Let $S_3$ be another $\SF$-separator such that $d_\TF(S_1,S_2)=d_\TF(S_2,S_3)$ and $d(S_1,S_3)=2\cdot d_\TF(S_1,S_2)$.
By a similar argument to the one of Lemma~\ref{TranslExists} there is an automorphism $g\in\Gamma$ that acts on $\TF$ like a translation with $S_1^g=S_2$ or $S_1^g=S_3$.
Thus the sequence $(S_1^{g^i})_{i\in\nat}$ is a sequence of pairwise disjoint $\SF$-separators such that each element of that sequence separates its predecessor from its successor.
\end{proof}

We can reformulate the statement of Lemma~\ref{Non-LocEndsAreGlobal} for the following corollary.

\begin{Cor}
Let $G$ be a connected graph with infinitely many non-local ends such that $\Gamma:=\Aut(G)$ acts transitively on the non-local ends of~$G$ and fixes no vertex set of finite diameter.
An end of~$G$ is dominated if and only if it is a local end.\qed
\end{Cor}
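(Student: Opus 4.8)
The plan is to read the Corollary directly off the correspondence between the ends of~$G$ and the structure tree~$\TF$. By Lemma~\ref{Non-LocEndsAreGlobal} the ends of~$\TF$ correspond precisely to the non-local (thin global) ends of~$G$, and, as noted in the preliminaries, every end of~$G$ corresponds either to an end or to a block of~$\TF$. Hence an end is non-local exactly when it corresponds to an end of~$\TF$, and local exactly when it corresponds to a block (an end sitting over a block cannot be non-local, since non-local ends sit over ends of~$\TF$). So it suffices to prove that an end is dominated if and only if it corresponds to a block of~$\TF$.

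For the direction \emph{dominated $\Rightarrow$ local} I would argue contrapositively. Let $\omega$ be non-local, so it corresponds to an end $\eta$ of~$\TF$; fix a ray of~$\TF$ representing~$\eta$ and let $S_1,S_2,\dots$ be the $\SF$-separators it meets, so that every ray of~$\omega$ eventually lies on the $\eta$-side of each~$S_k$. Each $S_k$ is a \emph{finite} vertex set, and because $\omega$ is non-local these separators leave every ball of~$G$; hence for any $x\in VG$ we have $x\notin S_k$ and $S_k$ separates $x$ from~$\omega$ for all large~$k$. Fixing such an~$S_k$, any family of $x$-$R$-paths that is disjoint except in~$x$ (with $R\in\omega$) must meet $S_k$ in pairwise distinct vertices, so there are at most $|S_k|$ of them. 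Thus $x$ does not dominate~$\omega$; as $x$ was arbitrary, $\omega$ is undominated. Equivalently, every dominated end is local.

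The reverse direction \emph{local $\Rightarrow$ dominated} is the crux, and I would again prove its contrapositive: an undominated end is non-local. Assume $\omega$ is undominated, so for every vertex $x$ there is a finite set $S\not\ni x$ separating $x$ from~$\omega$. Starting from a vertex $x_0$ on a ray $R\in\omega$ and iterating—at stage $i$ take a finite separator $T_i\not\ni x_i$ between $x_i$ and~$\omega$, then choose $x_{i+1}$ on~$R$ strictly on the $\omega$-side of~$T_i$—produces finite separators $T_0,T_1,\dots$ through which $\omega$ escapes. The remaining task is to upgrade this escaping sequence to a witness that $\omega$ corresponds to an end of~$\TF$, equivalently to a metric ray in~$\omega$. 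Here I would invoke that $\SF$ is a basic cut system of the fixed minimal order~$n$ whose separators form a single $\Gamma$-orbit, together with the hypothesis that $\Gamma$ fixes no vertex set of finite diameter, to match the $T_i$ to genuine $\SF$-separations and to force the $x_i$ to leave every ball.

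The hard part will be exactly this last step. An undomination witness delivers only \emph{arbitrary} finite separators, whereas the correspondence with~$\TF$ is phrased through $\SF$-separators, and a naive choice of representatives $t_i\in T_i$ need not give a metric ray, since separation alone does not force the distances $d(t_i,t_j)$ to grow. Bridging this gap is where the standing assumptions must do real work: minimality of the order~$n$ should let the escaping cuts refine to honest $\SF$-separations, while the absence of a $\Gamma$-fixed set of finite diameter (with transitivity on the non-local ends) should prevent these cuts from accumulating in a bounded region, so that the constructed ray is metric and $\omega$ is non-local. Once $\omega$ is shown to be non-local, Lemma~\ref{Non-LocEndsAreGlobal} places it over an end of~$\TF$, which closes the equivalence.
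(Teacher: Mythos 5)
Your reduction and your first direction are sound: a non-local end is, by Lemma~\ref{Non-LocEndsAreGlobal}, a global end corresponding to an end of~$\TF$, and one can extract from the proof of that lemma a sequence of \emph{pairwise disjoint} finite $\SF$-separators converging to~$\omega$; any fixed vertex $x$ lies in at most one of them, so some $S_k$ with $x\notin S_k$ separates $x$ from~$\omega$ and bounds the number of internally disjoint $x$--$R$-paths. (Two small imprecisions: you should get the disjointness of the $S_k$ from the nested sequence $(A_i,B_i)$ in the lemma's proof rather than from the vague claim that the separators ``leave every ball''; and the paths ending at the finitely many vertices of $R$ on the near side of $S_k$ need not meet $S_k$, so the count is $\abs{S_k}$ plus a finite correction. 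Neither affects the conclusion.)

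The second direction, however, is left genuinely open, and you say so yourself. The missing fact is that an \emph{undominated} end contains a metric ray (indeed is global), and your proposed bridge --- invoking minimality of the cut order and the absence of a $\Gamma$-fixed bounded set --- points in the wrong direction: the fact is purely combinatorial and needs none of the standing hypotheses. The argument you are missing is an iteration of the separator construction. First, if no vertex dominates $\omega$, then for every \emph{finite} set $X$ there is a finite $S$ with $S\cap X=\es$ separating $X$ from~$\omega$: take the union of the individual separators $S_x$ ($x\in X$), let $C$ be the component of the complement containing~$\omega$, and put $S:=N(C)$; a vertex of $X$ in $N(C)$ would lie in its own $\omega$-component, a contradiction. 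Second, iterate: having found $T_i$, apply this to $X\cup T_1\cup\dots\cup T_i$ to get $T_{i+1}$. Each vertex of $T_{i+1}$ has a neighbour in the new $\omega$-component, which sits inside the $\omega$-component of $G-T_i$; since $X\cup T_i$ is disjoint from $T_{i+1}$ and has no neighbour in that component, $d(X,T_{i+1})\ge d(X,T_i)+2$. Hence the $T_i$ leave every ball around a fixed vertex, every tail of every ray of $\omega$ lies beyond all $T_i$, and so every ray of $\omega$ is metric; thus an undominated end is non-local, which is exactly the contrapositive you need. Note that the paper offers no written proof here --- it presents the corollary as a direct reformulation of Lemma~\ref{Non-LocEndsAreGlobal} --- but this unproved implication is precisely the content that makes the reformulation legitimate, so your attempt as it stands does not establish the statement.
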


\begin{Tm}\label{non-local=metric}
Let $G$ be a connected graph with infinitely many non-local ends and let $\Gamma:=\Aut(G)$.
$\Gamma$ fixes no vertex set of finite diameter.
Then $\Gamma$ acts transitively on the non-local ends of~$G$ if and only if $\Gamma$ acts transitively on the metric ends of~$G$.
\end{Tm}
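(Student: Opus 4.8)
The plan is to use the structure tree $\TF$ of a basic cut system $\SF$ as the common framework, and to establish a correspondence between metric ends of $G$ and ends of $\TF$ that mirrors the correspondence for non-local ends already secured by Lemma~\ref{Non-LocEndsAreGlobal}. The key observation is that the transitivity of $\Gamma$ on either end-type is really a statement about $\Gamma$ acting transitively on the ends of $\TF$, so if I can show that both transitivity hypotheses are equivalent to transitivity on $V\TF$ (or on the ends of $\TF$), the two conditions become equivalent to each other.

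For the forward direction, assume $\Gamma$ acts transitively on the non-local ends. By Lemma~\ref{Non-LocEndsAreGlobal} every non-local end is a thin global end and corresponds to an end of $\TF$, and conversely every end of $\TF$ corresponds to a non-local end. Since each global end is in particular a metric end (a global end contains only metric rays, and by thinness these rays cannot be separated off by a finite-diameter set), I would first argue that the metric ends of $G$ are exactly the ends corresponding to ends of $\TF$, i.e.\ that the local ends are never metric ends. For this I use that a local end is dominated (by the Corollary following Lemma~\ref{Non-LocEndsAreGlobal}): a dominating vertex $x$ witnesses infinitely many $x$--$R$-paths, and the single vertex $x$ together with a bounded ball gives a finite-diameter set meeting every tail of the ray, so the ray is not metric. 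Hence the metric ends coincide with the non-local (= global, thin) ends, and transitivity on the one yields transitivity on the other immediately.

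For the converse, assume $\Gamma$ acts transitively on the metric ends. Here I would argue that every metric end is non-local and every non-local end is metric, so that the two orbit structures again coincide; transitivity on metric ends then transfers to the non-local ends. The inclusion ``non-local $\Rightarrow$ metric'' is the content of the first paragraph combined with Lemma~\ref{Non-LocEndsAreGlobal}. For the inclusion ``metric $\Rightarrow$ non-local'', a metric end contains by definition a metric ray, and a metric ray cannot lie in a local end (a local end contains no metric ray, by the definition of \emph{local} in Section~\ref{Ends}); thus every metric end is non-local. With both inclusions the two families of ends are identical as sets carrying the same $\Gamma$-action, and equivalence of the transitivity statements follows.

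The main obstacle I expect is the careful verification that the metric-end equivalence relation agrees with the non-local-end relation on the common underlying set of rays, rather than merely that the two classes of ends have the same ``labels.'' Two metric rays equivalent as \emph{metric} rays (not separated by any finite-\emph{diameter} set) must be shown to be equivalent as ordinary rays (not separated by any finite set), and vice versa for rays lying in non-local ends; the subtlety is that a finite set may have large diameter, so the two separation notions are a priori different. I would resolve this using thinness and the structure-tree correspondence: since each non-local end is thin and corresponds to a unique end of $\TF$, two metric rays in the same non-local end follow the same end of $\TF$, and the $\SF$-separators along that end of $\TF$ are finite vertex sets of \emph{unbounded} diameter (as noted at the start of the proof of Lemma~\ref{EndsOfTAreNonLocal}), so no single finite-diameter set separates them, giving metric equivalence; the reverse implication follows because metric equivalence is finer than ordinary equivalence by the refinement relation stated in Section~\ref{Ends}.
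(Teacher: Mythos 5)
Your strategy is essentially the paper's: both directions are routed through a $\Gamma$-equivariant correspondence between metric ends and non-local ends, with Lemma~\ref{Non-LocEndsAreGlobal} supplying that non-local ends are global and correspond to ends of~$\TF$, and with the crucial step being that each such end contains precisely one metric end. However, your justification of that crucial step does not work as written. The $\SF$-separators of a basic cut system all lie in a single $\Aut(G)$-orbit and therefore all have the \emph{same} finite diameter; what is unbounded (and what the opening of the proof of Lemma~\ref{EndsOfTAreNonLocal} actually records) is the \emph{distance between pairs} of separators, not their diameters. Moreover, ``the separators have unbounded diameter, hence no finite-diameter set separates the two rays'' is a non sequitur. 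The argument you need runs the other way: because the separators have uniformly bounded diameter and (by the proof of Lemma~\ref{Non-LocEndsAreGlobal}) there is a sequence of pairwise disjoint separators converging to the end, each separating its predecessor from its successor, a vertex set of finite diameter can meet only boundedly many of them; beyond the last one it meets, both rays lie in the same wing, so they are metrically equivalent. This is what the paper's one-line appeal to the transitivity of $\Gamma$ on the $\SF$-separators is encoding.

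A second, smaller repair: in the converse direction you invoke Lemma~\ref{Non-LocEndsAreGlobal} to get ``non-local $\Rightarrow$ metric,'' but that lemma is proved under the standing hypothesis that $\Gamma$ is transitive on the non-local ends --- exactly what you are trying to deduce there. The paper sidesteps this by using only the $\Gamma$-equivariant \emph{surjection} from metric ends onto non-local ends: every metric end lies in a unique end, which is non-local since it contains a metric ray, and every non-local end contains a metric ray and hence at least one metric end; transitivity upstairs then pushes down along this surjection with no injectivity and no lemma required. Your bijection-based phrasing should be weakened to that in the converse direction; with these two corrections the proposal matches the paper's proof.
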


\begin{proof}
Let $\SF$ be a minimal nested cut system such that the structure tree $\TF$ of~$G$ and $\SF$ is basic.
Every global end of~$G$ must be a metric end since by the transitivity of~$\Gamma$ on the $\SF$-separators the ray of~$\TF$ it corresponds to has to define precisely one metric end.
By Lemma \ref{Non-LocEndsAreGlobal} $\Gamma$ is transitively on the metric ends of~$G$.
On the other hand for every metric end there is a unique non-local end it corresponds to.
Since $\Gamma$ acts transitively on the metric ends, $\Gamma$ also has to be transitive on the non-local ends, as for every non-local end of~$G$ there is at least one metric end corresponding to it.
\end{proof}

The following lemma can be found in~\cite[Corollary~2.5]{PW}.

\begin{Lem}\label{PW2.5}\label{GeodDoubleRays}
For every connected graph $G$ with a separation $(A,B)$ of~$G$ such that $A\cap B$ is finite and with an automorphism $\alpha\in\Aut(G)$ with $A^\alpha\sub A\sm B$ there is some power of $\alpha$ that fixes a geodetic double ray with one end in~$A$ and one end in~$B$.\qed
\end{Lem}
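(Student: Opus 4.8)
The plan is to realise $\alpha$ as a \emph{translation} in the classical sense and to invoke the standard axis construction for such automorphisms; the separation $(A,B)$ will be used only to check the hypotheses of that construction and to locate the two ends of the resulting double ray. First I record the formal consequences of $A^\alpha\sub A\sm B$. Writing $S:=A\cap B$, applying $\alpha\inv$ gives the symmetric inclusion $B^{\alpha\inv}\sub B\sm A$, so the wings are strictly nested, $A\supseteq A^\alpha\supseteq A^{\alpha^2}\supseteq\cdots$ and $B\sub B^\alpha\sub B^{\alpha^2}\sub\cdots$. Moreover $S^\alpha\sub A^\alpha\sub A\sm B$ is disjoint from $S\sub B$, and iterating shows that the finite sets $S^{\alpha^i}$, $i\in\Z$, are pairwise disjoint, with $S^{\alpha^i}$ separating $S^{\alpha^{i-1}}$ (which lies in the $B$-side $B^{\alpha^i}\sm A^{\alpha^i}$) from $S^{\alpha^{i+1}}$ (which lies in the $A$-side $A^{\alpha^i}\sm B^{\alpha^i}$). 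Note also that every power $\alpha^n$, $n\geq 1$, satisfies the same hypothesis, since $A^{\alpha^n}\sub A^\alpha\sub A\sm B$.

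Next I would show that $\alpha$, and hence every power of it, fixes no vertex, so that the translation length $m:=\min_v d(v,v^\alpha)$, attained because its values lie in $\nat$, is at least $1$. Suppose $v^\alpha=v$; by symmetry say $v\in A$, whence $v=v^\alpha\in A^\alpha\sub A\sm B$. Put $k:=d(v,S)<\infty$ (finite since $G$ is connected and $S\ne\es$). Because $v$ is fixed, $d(v,S^{\alpha^{-i}})=d(v^{\alpha^i},S)=d(v,S)=k$ for every $i\geq 0$. But for $i\geq 1$ the set $S^{\alpha^{-i}}$ lies in $B\sm A$, so any path from $v\in A\sm B$ to $S^{\alpha^{-i}}$ meets $S$ and must then continue to its endpoint, which is disjoint from $S$; such a path has length at least $k+1$, contradicting $d(v,S^{\alpha^{-i}})=k$. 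The same argument applied to $\alpha\inv$ excludes fixed vertices in $B$, and replacing $\alpha$ by $\alpha^n$ (which satisfies the hypothesis) shows that no power fixes a vertex.

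Finally, choosing $x$ with $d(x,x^\alpha)=m$ and a geodesic $P$ from $x$ to $x^\alpha$, I would run the classical translation-axis argument: after possibly replacing $\alpha$ by a power, the minimality of $m$ forces the displacement to be additive along the orbit (that is, no shortcuts occur), so that $R:=\bigcup_{i\in\Z}P^{\alpha^i}$ is a geodetic double ray fixed setwise by that power. Its forward tail contains the vertices $x^{\alpha^i}$, which lie in $A^{\alpha^i}\sub A\sm B$ for $i\geq 1$, while its backward tail lies in $B\sm A$ by the symmetric statement; hence the two ends of $R$ lie in $A$ and in $B$ respectively, as required.

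The step I expect to be the main obstacle is the additivity (no-shortcut) claim underlying the axis construction. In a graph that is not locally finite the balls around $x$ may be infinite, so one cannot argue from finiteness of an orbit; one must instead carry out the standard minimal-displacement argument for hyperbolic automorphisms and, if necessary, pass to a power of $\alpha$ in order to guarantee that a single geodetic double ray, rather than a permuted family of parallel ones, is stabilised (this is exactly the origin of the ``some power'' in the statement). Throughout, the finiteness of the separator $S$ is what makes the translates $S^{\alpha^i}$ pairwise disjoint, and this underlies both the fixed-point-free claim and the identification of the two ends.
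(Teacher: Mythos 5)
The paper does not actually prove this lemma: it is quoted verbatim from Polat and Watkins \cite[Corollary~2.5]{PW}, which is why the statement carries a \qed{} with no argument attached. So the comparison here is between your sketch and the cited external proof. The elementary part of your write-up is correct and cleanly done: the inclusions $A\supseteq A^\alpha\supseteq A^{\alpha^2}\supseteq\cdots$ and $B^{\alpha\inv}\subseteq B\setminus A$, the pairwise disjointness of the translates $S^{\alpha^i}$ of the finite separator, the fact that $S^{\alpha^i}$ separates $S^{\alpha^{i-1}}$ from $S^{\alpha^{i+1}}$, and the deduction that no power of $\alpha$ fixes a vertex all follow exactly as you say.

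The genuine gap is the step you yourself flag: the claim that ``the minimality of $m$ forces the displacement to be additive along the orbit.'' This is not a classical fact for graphs; it is essentially the whole content of the lemma, and it is false as stated for $\alpha$ itself. For a concrete obstruction, take $V=\ganz\cup\{c_n: n\in\ganz\}$ with edges $\{n,n+1\}$, $\{c_n,n\}$, $\{c_n,n+3\}$ and $\alpha$ the shift by $1$: every integer vertex has minimal displacement $m=1$, yet $d(0,3)=2<3$, so the concatenation $\bigcup_i P^{\alpha^i}$ of minimal geodesics is not geodetic. One must pass to $\alpha^3$ and to the double ray through the vertices $c_{3k}$, and in general one needs a vertex $x$ and an exponent $n$ with $d(x,x^{\alpha^{nk}})=k\,d(x,x^{\alpha^n})$ for all $k$, i.e.\ a power that attains its stable translation length $\tau=\lim_k d(x,x^{\alpha^k})/k$. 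Since $d(x,x^{\alpha^n})\geq n\tau$ always, this already requires showing that $\tau$ is rational and that the infimum is attained --- neither of which follows from minimality of displacement in a (possibly non-locally-finite) graph; there is no convexity of the displacement function to appeal to, and ``the standard minimal-displacement argument for hyperbolic automorphisms'' has no general graph-theoretic analogue. The finite separator and its disjoint translates are exactly the tool Polat and Watkins use to control this, and that argument occupies the bulk of their paper; your proposal leaves it entirely to the reader. The final localisation of the two ends of the axis in $A$ and in $B$ is fine once the axis exists.
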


\section{Uniqueness of the structure tree}\label{UniquenessSection}

Our aim is to show that the structure of the tree $\TF$ is essentially independent of the choice of $\SF$.
But Example~\ref{NonUnique} shows that in general it is not unique.
The graph of the example has two different structure trees one of which is the subdivision of the other tree.
But in Theorem~\ref{UniqueTree} we show that this is always the only ambiguity that could occur.

\begin{Exam}\label{NonUnique}
Let $T$ be a subdivision of a semi-regular tree $T'$ that is not regular.
We suppose that $VT'\sub VT$.
Let $A\cup B= VT'$ be be the natural bipartition of~$T'$ and let $C= VT\sm VT'$.
Then all the sets $A,B,C$ are $\Aut(T)$-invariant.
Let $\AF=\{\{a\}\mid a\in A\}$ and let $\BF$ and $\CF$ be the corresponding sets for the sets $B$ and $C$.
Let $\SF_\AF, \SF_\BF, \SF_\CF$ be $\Aut(T)$-invariant cut systems such that the corresponding sets of separators are $\AF$, $\BF$, $\CF$, respectively.
Then the structure trees $\TF_\AF$ and $\TF_\BF$ are isomorphic to~$T'$ and the structure tree $\TF_\CF$ is isomorphic to~$T$.
Thus not all structure trees are isomorphic.
\end{Exam}

\begin{Tm}\label{UniqueTree}
Let $G$ be a connected graph with infinitely many non-local ends such that the automorphism group $\Gamma$ of~$G$ acts transitively on the non-local ends of~$G$ and fixes no vertex set of finite diameter.
Then the structure trees for any two basic cut systems $\SF_1,\SF_2$ such that each $\SF$-separation separates non-local ends are either the same or one is the subdivision of the other.
\end{Tm}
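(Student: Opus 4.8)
The plan is to isolate from $G$ a single semi-regular tree $T'$ that does not depend on the cut system, to show that every structure tree is a subdivision of $T'$ in one of only two possible ways, and then to read off the claimed dichotomy. Throughout I may use that, by Lemma~\ref{TFHasEnd}, both $\TF_1$ and $\TF_2$ have infinitely many ends and hence contain rays, so that Lemma~\ref{nearlySemi-regular} applies: each $\TF_i$ is a semi-regular tree or a subdivided semi-regular tree, its $\SF_i$-separators form a single $\Gamma$-orbit, and its $\SF_i$-blocks form at most two $\Gamma$-orbits. By Lemma~\ref{EndsOfTAreNonLocal} every end of $\TF_i$ corresponds to a non-local end of~$G$, and conversely every non-local end of~$G$ is a thin global end (Lemma~\ref{Non-LocEndsAreGlobal}) and hence corresponds to an end of~$\TF_i$; this gives a canonical $\Gamma$-equivariant bijection between the ends of $\TF_1$ and the ends of $\TF_2$, induced by the identity on the non-local ends of~$G$.

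First I would treat the case where $\SF_1$ and $\SF_2$ are nested with one another. Fix two non-local ends $\omega\ne\omega'$. The $\SF_1$-separators separating $\omega$ from $\omega'$ are linearly ordered by the separation relation, as are the $\SF_2$-separators doing so; since the two systems are mutually nested, these two orders merge into a single one, so the two families appear as interleaved subsequences of one sequence of separators running from~$\omega$ to~$\omega'$. Because each $\SF_i$ is basic, all its separators lie in a single $\Gamma$-orbit, so the configuration of separators of the other system in the vicinity of a given separator is the same at every separator; combined with the transitivity of~$\Gamma$ on the non-local ends this should force the interleaving to have a \emph{constant} gap. To turn the local picture into a genuine comparison I would realize $\omega,\omega'$ by a geodetic double ray of~$G$, available through Lemma~\ref{GeodDoubleRays} and a translation supplied by the hypotheses, and measure the spacings of the two families along it. A constant interleaving along every such line means that $\TF_1$ and $\TF_2$ become isomorphic after suppressing all vertices of degree~$2$; write $T'$ for this common suppressed tree.

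To pass from this to arbitrary basic cut systems I would run a descent based on Lemma~\ref{thirdCutSystem}. Viewing $\SF_1$ and $\SF_2$ as subsystems of the (non-local) minimal cut system of Example~\ref{Example}, if they are not nested then Lemma~\ref{thirdCutSystem} produces a basic nested subsystem that is nested with~$\SF_2$ and whose separations are crossed strictly less often by~$\SF_1$, as measured by~$m_{\SF_1}$. Since $m_{\SF_1}$ takes non-negative integer values, iterating yields a finite chain $\SF_2=\SF^{(0)},\SF^{(1)},\dots,\SF^{(k)}$ in which consecutive members are nested and $\SF^{(k)}$ is nested with~$\SF_1$. Applying the nested case to each consecutive pair, and to the pair $(\SF_1,\SF^{(k)})$, shows that all structure trees along the chain have the same degree-$2$ suppression; that this suppressed tree does not change along the chain follows from the canonical end bijection of the first paragraph, since the suppression only records the branching of the non-local end space of~$G$, which every structure tree realizes. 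Hence $\TF_1$ and $\TF_2$ are both subdivisions of one and the same semi-regular tree~$T'$.

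Finally I would read off the dichotomy from the orbit count of Lemma~\ref{nearlySemi-regular}. In the two-orbit case the structure tree is $T'$ itself, while in the three-orbit case the separator vertices have degree~$2$ and the tree is exactly the subdivision of~$T'$ obtained by subdividing every edge once; no other subdivision can occur, because within a single structure tree all edges are equivalent under~$\Gamma$. Consequently each of $\TF_1,\TF_2$ is either $T'$ or this one fixed subdivision of~$T'$, and in every combination they are either isomorphic or one is the subdivision of the other, as claimed. I expect the main obstacle to lie in the nested case: proving that the two families of separators interleave with a genuinely constant, and not merely bounded, gap, and checking that the descent of Lemma~\ref{thirdCutSystem} never alters the suppressed core~$T'$. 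Both points rest on extracting maximal mileage from the single-orbit homogeneity of the separators together with the transitivity of~$\Gamma$ on the non-local ends.
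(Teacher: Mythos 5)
Your overall architecture coincides with the paper's: reduce to the case where $\SF_1\cup\SF_2$ is nested by iterating Lemma~\ref{thirdCutSystem} and inducting on $m_{\SF_1}$, then control how the two separator families interleave, then read off the dichotomy from the orbit counts of Lemma~\ref{nearlySemi-regular}. The problem is that the step you yourself flag as the main obstacle --- that the interleaving is constant --- is exactly where all the work lies, and ``single-orbit homogeneity of the separators plus end-transitivity should force a constant gap'' is not an argument. Transitivity of $\Gamma$ on the $\SF_1$-separators and on the $\SF_2$-separators separately does not control how many $\SF_2$-separators sit between a given \emph{pair} of $\SF_1$-separators bounding a common $\SF_1$-block: a single $\SF_1$-block may meet many $\SF_1$-separators, and a priori the count could differ from pair to pair, or even be unbounded for a fixed pair. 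The paper's actual argument (Claims~\ref{2ndClaim} and~\ref{3rdClaim} of its proof) is: if three $\SF_2$-separators separated two $\SF_1$-separators at distance $2$ in $\TF_1$, then Lemma~\ref{TranslExists} would yield a translation mapping one of them to another while fixing the intervening $\SF_1$-block $X$, so $X$ would contain an end of~$\TF_2$ and hence, by Lemma~\ref{Non-LocEndsAreGlobal}, a non-local end of~$G$ --- contradicting the fact that every non-local end corresponds to an end, not a block vertex, of~$\TF_1$. A further delicate argument (constructing an orientation on an invariant double ray of~$\TF_2$ that no automorphism can reverse, contradicting transitivity on the non-local ends) is then needed to pin down the orbit structure when exactly two $\SF_2$-separators intervene. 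None of this is present in your sketch.

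A secondary issue: your justification that the degree-$2$ suppression $T'$ is independent of the cut system because it ``only records the branching of the non-local end space of~$G$'' is false as stated --- the regular trees of degree $3$ and of degree $4$ have homeomorphic end spaces but non-isomorphic suppressions, so the suppressed tree is not determined by the end space alone. What actually has to be verified, and what the paper's final three claims do, is a degree-by-degree matching ($d_{\TF_1}(S)=d_{\TF_2}(X)$ for appropriate separators $S$ and blocks $X$), derived from the interleaving bound, the orbit classification of Claim~\ref{3rdClaim}, and the structure tree $\TF$ of the union $\SF_1\cup\SF_2$. Without those two claims carried out, neither the constant interleaving nor the matching of branch vertices is established, so the proposal does not yet constitute a proof.
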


\begin{proof}
We will prove the theorem by a series of claims.

\begin{Claim}
It is sufficient to prove the theorem for each two cut systems such that their union is a nested cut system.
\end{Claim}

\begin{proof}
Let $\SF_1,\SF_2$ be two distinct basic cut systems such that their union is not a nested cut system.
By Lemma~\ref{thirdCutSystem} there is a nested cut system $\SF$ which is nested with $\SF_2$ and such that $m_{\SF_1}(A,B)<m_{\SF_1}(A',B')$ for all $(A,B)\in\SF, (A',B')\in\SF_2$.
By induction on the value $m_{\SF_1}(A,B)$ the structure trees for $\SF$ and $\SF_1$ as well as the structure trees for $\SF$ and $\SF_2$ are essentially the same and by Lemma~\ref{nearlySemi-regular} we also know that the claim holds for the structure trees of~$\SF_1$ and of~$\SF_2$.
\end{proof}

So let $\SF:=\SF_1\cup \SF_2$ be a nested cut system and let $\TF, \TF_1,\TF_2$ be the structure trees of~$G$ and $\SF,\SF_1,\SF_2$, respectively.

\begin{Claim}\label{2ndClaim}
For each two $\SF_1$-separators with distance $2$ in $\TF_1$ there are at most two $\SF_2$-separators separating them.
\end{Claim}

\begin{proof}
Suppose that this is not the case.
Let $A,A'$ be two $\SF_1$-separators with distance $2$ in $\TF_1$ and let $B_1,B_2,B_3$ be three $\SF_2$-separators such that each of them separates $A$ and $A'$.
By Lemma~\ref{TranslExists} there is a translation $\alpha\in\Gamma$ such that one of $B_1,B_2,B_3$ is mapped by~$\alpha$ onto another one of those three separators.
Furthermore $\alpha$ fixes the $\SF_1$-block $X$ between $A$ and~$A'$.
Thus there is an end of~$\TF_2$, name both ends defined by $\alpha$, and hence a corresponding global end in~$G$ by Lemma~\ref{Non-LocEndsAreGlobal} that lies in $X$.
This is a contradiction to the same lemma, as all non-local ends of~$G$ corresponds to ends of the structure trees $\TF_1$ and $\TF_2$.
\end{proof}

So there are at most two $\SF_2$-separators $B_1$ and $B_2$ separating $A$ and $A'$.

\begin{Claim}\label{3rdClaim}
If there are two $\SF_2$-separators $B_1$ and $B_2$ separating $A$ and $A'$, then there are two orbits on the $\SF_2$-blocks of~$G$ in one of which all $\SF_2$-blocks contain $\SF_1$-separators and in one of which no $\SF_2$-block contains any $\SF_1$-separator.
\end{Claim}

\begin{proof}
Let us suppose that this is not the case.
It is not possible that there are several distinct $\Gamma$-orbits on the $\SF_2$-blocks that contain $\SF_1$-separators as $\Gamma$ acts transitively on the $\SF_1$-separators.
Furthermore there cannot be distinct $\Gamma$-orbits on the $\SF_2$-blocks that do not contain any $\SF_1$-separator by the transitivity of~$\Gamma$ on the $\SF_1$-separators and by Claim~\ref{2ndClaim}.

Let $Y_1B_1Y_2B_2$ be a path in~$\TF_2$.
Then there is an $\alpha\in\Gamma$ such that $Y_1^\alpha=Y_2$.
If $\alpha$ does not act like a translation on~$\TF_2$, then $B_1^\alpha= B_1$ and $B_2^{\alpha\inv}\sub Y_1$ and thus there are the three $\SF_2$-separators $B_2^{\alpha\inv},B_1,B_2$ separating two $\SF_1$-separators with distance $2$ in~$\TF_1$ or there is again a non-local end in~$X$.
This contradicts Claim~\ref{2ndClaim} or Lemma~\ref{Non-LocEndsAreGlobal} and thus $\alpha$ is a translation.
Hence there is a unique double ray $R$ in~$\TF_2$ invariant under $\alpha$.
Let $\omega_1,\omega_2$ be the ends that are defined by~$R$.
By Lemma~\ref{Non-LocEndsAreGlobal} there are two non-local ends of~$G$ corresponding to $\omega_1$ and $\omega_2$ and thus there is $\beta\in\Gamma$ with $\omega_1^\beta=\omega_2$.
Then there are two infinite subrays $R_1$ and $R_2$ of~$R$ such that $R_1^\beta=R_2$.
$\beta$ has to map $\SF_2$-separators onto $\SF_2$-separators and $\SF_2$-blocks onto $\SF_2$-blocks.
So let $R_1=x_0x_1\ldots$ and $R_2=y_0y_1\ldots$ such that $x_0$ and $y_0$ are $\SF_2$-blocks and $x_i^\beta=y_i$.
We show that the double ray $R$ has an orientation in~$\TF$ and thus the ends defined by~$R$ cannot be mapped onto each other by some $\gamma\in\Gamma$:
For each $x_i$ with odd~$i$ there is another $\SF_2$-separator in~$x_{i-1}$ but not in~$x_{i+1}$ that is separated from $x_i$ by no $\SF_1$-separator.
Conversely for each $y_i$ with odd~$i$ there is another $\SF_2$-separator in~$y_{i+1}$ but not in~$y_{i-1}$ that is separated from $y_i$ by no $\SF_1$-separator.
Thus the ends $\omega_1$ and $\omega_2$ do not lie in the same $\Gamma$-orbit.
This proves that no $\alpha$ with $Y_1^\alpha=Y_2$ exists.
\end{proof}

We separate the remaining part of the proof into three cases:
In the first one there are two $\Gamma$-orbits on the $\SF_i$-blocks for $i=1,2$, in the second case there is just one $\Gamma$-orbit on the $\SF_1$-blocks but two on the $\SF_2$-blocks, and in the third case there is for each $i=1,2$ just one $\Gamma$-orbit on the $\SF_i$-blocks.
In each case we show the conclusion of the theorem which we denote by $(*)$.

Let $\GF_i$, $i=1,2$, be that set of all $\SF$-blocks such that each element only contains $\SF_i$-separators, and let $\GF_3$ be the set of all other $\SF$-blocks contain both $\SF_1$- and $\SF_2$-separators.

\begin{Claim}
If there are two $\Gamma$-orbits on the $\SF_1$-blocks and on the $\SF_2$-blocks, then $(*)$ holds.
\end{Claim}

\begin{proof}
All $\SF$-separators must have degree~$2$ in $\TF$ and there are at least three $\Gamma$-orbits on the $\SF$-blocks.
Hence we know that $\GF_3\ne\es$.

The elements of $\GF_3$ must have degree $2$ in~$\TF$ as otherwise there would be three $\SF_i$-separators ($i=1$ or $2$) between two $\SF_j$-separators of distance $2$ in~$\TF_j$, $i\neq j$, a contradiction to Claim~\ref{2ndClaim}.
Let $X_1,X_2$ be $\SF_1$-blocks of distinct $\Gamma$-orbits, let $Y_1,Y_2$ be distinct $\SF_2$-blocks of distinct $\Gamma$-orbits, and let $Z_1\in\GF_1$, $Z_2\in\GF_2$.
Then there is w.l.o.g.\ $d_{\TF_1}(X_1)=d_\TF(Z_1)=d_{\TF_2}(Y_1)$ and $d_{\TF_1}(X_2)=d_\TF(Z_2)=d_{\TF_2}(Y_2)$ and thus $(*)$ holds.
\end{proof}

\begin{Claim}
If $\Gamma$ acts transitively on the $\SF_1$-blocks but if there are two $\Gamma$-orbits on the $\SF_2$-blocks, then $(*)$ holds.
\end{Claim}

\begin{proof}
In this case there is $\GF_1=\es$ and $\GF_2,\GF_3\ne\es$.
All elements of~$\GF_2$ must have degree $2$ in~$\TF$ and thus for every $\SF_1$-separator $S$ there is $d_{\TF_1}(S)=d_{\TF_2}(X_1)$ for one (and hence every) $\SF_2$-block $X_1$ containing an $\SF_1$-separator and for every $\SF_1$-block $Y$ there is $d_{\TF_1}(Y)=d_{\TF_2}(X_2)$ for one (and hence every) $\SF_2$-block containing no $\SF_1$-separator.
Thus the claim follows.
\end{proof}

\begin{Claim}
If $\Gamma$ acts transitively on both the $\SF_1$- and the $\SF_2$-blocks, then $(*)$ holds.
\end{Claim}

\begin{proof}
In this case there is $\GF_1=\GF_2=\es$ and for all $\SF_j$-separators $S_j$ and all $\SF_j$-blocks $X_j$ the equality $d_{\TF_j}(S_j)=d_{\TF_i}(X_i)$ with $i\neq j$ holds by Claim~\ref{3rdClaim}.
\end{proof}

This was the last one of the three cases by Lemma~\ref{nearlySemi-regular} and thus the theorem is proved.
\end{proof}

\section{End-transitive graphs}\label{M1Section}

Throughout this section let $G$ be a connected graph with infinitely many non-local ends on which $\Gamma:=\Aut(G)$ acts transitively.
Furthermore $\Gamma$ fixes no vertex set of~$G$ of finite diameter.
Let $\SF$ be a basic cut system and let $\TF$ be the structure tree of~$G$ and $\SF$.

\begin{Lem}\label{DistanceOfSepSmall}
The distance between any two $\SF$-separators in a common $\SF$-block is bounded.
\end{Lem}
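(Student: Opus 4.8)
The plan is to argue by contradiction: assuming that the distances between $\SF$-separators lying in a common $\SF$-block are unbounded, I will construct a metric ray contained in a single $\SF$-block and derive a contradiction with the transitivity of $\Gamma$ on the non-local ends. First I reduce to a single block. By Lemma~\ref{TFHasEnd} the structure tree $\TF$ has infinitely many ends, so it contains rays, and hence by Lemma~\ref{nearlySemi-regular} the $\SF$-blocks form at most two $\Gamma$-orbits. If the distances in question are unbounded, then for every $n\in\nat$ there is an $\SF$-block together with two of its $\SF$-separators at distance at least $n$; since there are only finitely many $\Gamma$-orbits of $\SF$-blocks, infinitely many of these witnessing blocks lie in one orbit $\OF$. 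Fixing a representative $X\in\OF$ and mapping each witness onto $X$ by an automorphism (an isometry of $G$ that carries the $\SF$-separators contained in a block to those contained in its image), I obtain that the set $\mathcal{W}_X$ of $\SF$-separators contained in $V(X)$ has unbounded diameter in $G$.

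Next I construct a metric ray inside $X$, following the construction in the proof of Lemma~\ref{TFHasEnd}. Choose $T_0\in\mathcal{W}_X$ and a vertex $x_0\in T_0$. Having chosen $x_0,\dots,x_{i-1}$ together with paths in $X$ joining consecutive ones, I use the unboundedness of $\mathcal{W}_X$ to pick $T_i\in\mathcal{W}_X$ carrying a vertex $x_i$ lying deep inside a component of $X$ far from all previously chosen vertices, and I join $x_{i-1}$ to $x_i$ by a path $R_i$ inside $X$. The union of the $R_i$ then contains a ray $R\subseteq X$, and by choosing the successive separators in nested components at strictly increasing distance from $x_0$, as in Lemma~\ref{TFHasEnd}, only finitely many vertices of $R$ lie in any fixed ball, so $R$ is a metric ray.

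Finally, since $R\subseteq X$ has $R\cap X$ infinite, the end $\omega$ of $G$ containing $R$ corresponds to the vertex $X$ of $\TF$; on the other hand $R$ is a metric ray, so $\omega$ is a non-local end of $G$. By Lemma~\ref{TFHasEnd} there is an end of $\TF$, which by Lemma~\ref{EndsOfTAreNonLocal} together with Lemma~\ref{Non-LocEndsAreGlobal} corresponds to a non-local end $\omega'$ of $G$. The action of $\Gamma$ on $\TF$ carries vertices to vertices and ends of $\TF$ to ends of $\TF$, so the correspondence is $\Gamma$-equivariant and whether a non-local end corresponds to a vertex or to an end of $\TF$ is a $\Gamma$-invariant distinction. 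Thus $\omega$ (corresponding to a vertex) and $\omega'$ (corresponding to an end) lie in distinct $\Gamma$-orbits, contradicting the transitivity of $\Gamma$ on the non-local ends of $G$. The main obstacle is the second step: ensuring that the connecting paths $R_i$ can be routed inside the block $X$, so that $R$ is genuinely contained in $X$, and that the resulting ray is a true metric ray rather than merely a ray of unbounded diameter. Both points rest on the internal connectivity of $\SF$-blocks and on the nested-component device controlling the growth of $d(x_0,x_i)$, exactly as carried out in Lemma~\ref{TFHasEnd}.
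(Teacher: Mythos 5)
Your reduction to a single block is sound (by Lemma~\ref{TFHasEnd} and Lemma~\ref{nearlySemi-regular} there are only finitely many $\Gamma$-orbits of $\SF$-blocks, so one fixed block $X$ contains $\SF$-separators at unbounded pairwise distances), and your final step is also sound: a metric ray $R$ with $R\cap X$ infinite would give a non-local end corresponding to the vertex $X$ of $\TF$, contradicting Lemma~\ref{Non-LocEndsAreGlobal}. The gap is in the middle step, which you yourself flag as the main obstacle and then dismiss by appeal to Lemma~\ref{TFHasEnd}. The nested-component device there is not available here. In Lemma~\ref{TFHasEnd} the block $X$ is the unique centre of a diameter-$4$ tree, hence $\Gamma$-invariant; the existence of a component of $X-B_r(S)$ with property $(*)$ is deduced from the fact that an automorphism carrying $S$ to a far-away separator $S'$ \emph{stabilizes $X$} and therefore transports the component structure of $X-B_r(S)$ to that of $X-B_r(S')$. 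In the present setting an automorphism mapping one separator of $X$ to another need not preserve $X$ at all, so that homogeneity argument collapses. Without it, unboundedness of the separator set of $X$ only yields vertices of $X$ arbitrarily far apart; a priori $X$ could be "star-like'', with each separator sitting in its own bounded branch hanging off a bounded core, in which case every path between far-apart separators returns to the core and $X$ contains no metric ray (possibly no ray at all). So the implication "separators of $X$ at unbounded distances $\Rightarrow$ metric ray in $X$'' is exactly what needs proof, and your argument does not supply it.

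The paper sidesteps this entirely. It takes a translation (Lemma~\ref{TranslExists}) whose axis $Q$ in $\TF$ has consecutive $\SF$-separators at distances taking only two values, and, assuming the lemma fails, builds a ray $R$ in $\TF$ along which consecutive separators have unbounded distances. Both rays of $\TF$ define non-local ends of $G$, but an automorphism carrying one of these ends to the other would carry a tail of $R$ isometrically onto a tail of $Q$, which is impossible since one gap sequence is bounded and the other is not; this contradicts end-transitivity. If you want to keep your strategy, you would need a separate argument that a block whose separators have unbounded diameter must carry a metric ray; as it stands, the comparison of gap sequences along two rays of the structure tree is the argument that actually closes the proof.
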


\begin{proof}
We have to show that there is a constant $m<\infty$ such that for all two $\SF$-separators $S_1,S_2$ in a common $\SF$-block $X$ there is $d(S_1,S_2)\leq m$.

By Lemma~\ref{TranslExists} there is a translation in $G$ such that on the double ray $Q$ defined by that translation the distance between any two $\SF$-separators with distance $2$ in~$\TF$ has at most two distinct values.
Let us suppose that no such $m$ as conjectured exists.
Then there is a ray $R$ in~$\TF$ such that there is a sequence $\seq{S}{i}{\nat}$ of separators on~$R$ with $d(S_i, S_i')>i$ where $S_i'$ is that $\SF$-separator on~$R$ following on~$S_i$.
But the two described ends, one defined by $R$ and the other one defined by some translation, cannot be mapped onto each other.
This is a contradiction and thus the lemma is proved.
\end{proof}

\begin{Lem}\label{lowerBound}
There is $M<\infty$ such that each vertex of~$G$ lies in at most $M$ $\SF$-blocks on each ray in~$\TF$.
\end{Lem}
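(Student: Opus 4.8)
The plan is to reduce the statement to a count of \emph{separators} rather than blocks, and then to bound that count by comparing the separators with a single geodetic ray running out to the end. First I would record the relevant convexity: for a fixed $v\in VG$, the set $\TF_v$ of those vertices of $\TF$ (separators or blocks) whose underlying vertex set contains $v$ is convex in $\TF$. Indeed, if $U,U'\in\TF_v$ and a separator $T=A\cap B$ lies on the $\TF$-geodesic between them, then $V(U)\sub A$ and $V(U')\sub B$, whence $v\in A\cap B=T$; and a block on that geodesic contains a neighbouring separator, which already contains $v$. Consequently, for any ray $\rho$ in $\TF$ the intersection $\rho\cap\TF_v$ is a subpath, in which separators and blocks alternate, so the number of blocks on $\rho$ containing $v$ is at most one more than the number of separators on $\rho$ containing $v$. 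It therefore suffices to bound the latter by a constant independent of $v$ and $\rho$.

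The heart of the argument is the following. Let $\rho$ run to the end $\omega$ of $\TF$; by Lemma~\ref{Non-LocEndsAreGlobal} it corresponds to a thin global end $\widehat\omega$ of $G$. Write the separators on $\rho$ as $S_1,S_2,\dots$ toward $\omega$, with separations $(A_i,B_i)$ where $B_i$ is the $\omega$-side, so that $B_1\supsetneq B_2\supsetneq\cdots$. Since all $\SF$-separators lie in one $\Gamma$-orbit, they are pairwise isometric, so $\diam(S)$ is the same finite number $D_0$ for every separator $S$. Suppose $D$ is a geodetic double ray whose forward end is $\widehat\omega$ and which crosses every $S_i$ that contains $v$. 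For each such $S_i$ let $\tau_i$ be the largest position on $D$ with $D(\tau_i)\in S_i$; beyond $\tau_i$ the ray $D$ stays strictly on the $\omega$-side of $S_i$. If $S_i,S_{i'}$ both contain $v$ with $i<i'$, then at position $\tau_i$ the ray is in $S_i\sub A_{i'}$, hence \emph{not} strictly on the $\omega$-side of $S_{i'}$, so $\tau_i<\tau_{i'}$. Thus the $\tau_i$ are distinct, and each $D(\tau_i)\in S_i$ satisfies $d(D(\tau_i),v)\le D_0$. Because $D$ is geodetic, $|\tau_i-\tau_{i'}|=d(D(\tau_i),D(\tau_{i'}))\le 2D_0$, so all these positions lie in an interval of length $2D_0$; hence at most $2D_0+1$ separators on $\rho$ contain $v$, and at most $2D_0+2$ blocks do. This gives the uniform bound $M:=2D_0+2$.

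The main obstacle is producing, for a given $\rho$ and $v$, a geodetic double ray $D$ that realises $\widehat\omega$ and crosses the \emph{entire} run of separators on $\rho$ containing $v$; equivalently, a geodetic double ray whose forward end is $\widehat\omega$ and whose other end lies beyond the first such separator on the non-$\omega$ side. I would obtain this from Lemma~\ref{GeodDoubleRays}: by Lemma~\ref{TranslExists} (using that $\TF$ has infinitely many ends by Lemma~\ref{TFHasEnd}) there is a translation $\alpha\in\Gamma$ acting on $\TF$ along a double ray, and for a separation at the first relevant separator $S$ one arranges $A^{\alpha}\sub A\sm B$ so that a power of $\alpha$ fixes a geodetic double ray with one end in each wing of $S$. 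The delicate point, which I expect to require the most care, is guaranteeing that this double ray's non-$\omega$ end lies on the non-$\omega$ side of the \emph{first} separator of the run while its forward end is $\widehat\omega$, so that it meets every separator of the run; one can sidestep the alignment of $\alpha$'s axis with $\rho$ by bounding every finite subcollection of run-separators (each admitting such a crossing geodesic) and concluding that the whole count is at most $2D_0+1$.
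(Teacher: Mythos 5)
Your counting idea---that a geodetic double ray cannot revisit the ball $B_{D_0}(v)$ outside an interval of length $2D_0$, so only boundedly many separators through $v$ can be ``exited'' by it---is a legitimate alternative to the paper's finish. But the proof has a genuine gap exactly where you place it, and the sidestep you offer does not close it. Everything hinges on producing a geodetic double ray whose two ends are separated by \emph{each} separator of the run $S_{i_1},\dots,S_{i_k}$ through $v$ (whether for the whole run or for an arbitrary finite sub-run makes no difference in difficulty). The only supply of geodetic double rays is Lemma~\ref{TranslExists} combined with Lemma~\ref{PW2.5}, and these produce the axis of a translation, i.e.\ a double ray whose ends are the two ends fixed by some hyperbolic element. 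Lemma~\ref{TranslExists} controls that axis only along a prescribed path of length $2$ or $4$ in $\TF$; beyond that the axis goes its own way, so nothing forces it to traverse the whole segment of $\rho$ between $S_{i_1}$ and $S_{i_k}$ once $k$ is large. Thus ``each finite subcollection admits a crossing geodesic'' is precisely the unproved assertion. The missing ingredient is the paper's use of end-transitivity: it argues by contradiction, builds a single ray of $\TF$ carrying arbitrarily long runs of separators through a common vertex, maps the corresponding global end of $G$ (via transitivity of $\Gamma$ on the non-local ends) onto the end of a translation axis so that the bad runs reappear on that axis, and only then derives the contradiction---there by periodicity of the axis together with the finiteness of the separators, though your geodesic-cannot-linger count applied to the Polat--Watkins geodesic along that axis would serve equally well at that final stage. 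Without some such transport of $\rho$ onto an axis, your argument does not get off the ground for an arbitrary ray.

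A secondary, repairable point: the strict inequality $\tau_i<\tau_{i'}$ is not justified. If the last visit of $D$ to $S_i$ and to $S_{i'}$ is one and the same vertex of $S_i\cap S_{i'}$, you only get $\tau_i\le\tau_{i'}$, and a single position shared as $\tau$-value by many separators would defeat the count---note that bounding the number of separators containing the vertex $D(\tau)$ is essentially the statement being proved, so you cannot appeal to it. You do correctly get that all last-visit (and all first-visit) positions lie in an interval of length $2D_0$ of the geodesic, but an extra argument is needed to convert ``boundedly many positions'' into ``boundedly many separators.''
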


\begin{proof}
Suppose the claim does not hold.
We construct a sequence $\seq{P}{i}{\nat}$ of finite paths with $P_i\sub P_{i+1}$ such that there is a subpath of length $i$ of $P_i$ such that the intersection of the blocks and separators on that subpath is not empty.
Then for every finite path $P_i$ of length at least $i$ with endvertex $X$ in~$\TF$ there is an infinite component $C$ of $\TF-P_i$ that is adjacent to~$X$ and in which there is a ray containing $2i+2$ separator vertices with a non-trivial intersection as $\Gamma$ acts transitively on the separator vertices of~$\TF$.
The elements of that non-trivial intersection might intersect trivially with any separator of the path $P_i$.
We may extend $P_i$ and get a finite path $P_{i+1}$ such that there is a sequence of at least $i+1$ separator vertices containing a vertex $y_{i+1}\in VG$.
By recursion we get a ray $R=\bigcup _{i\in\nat} P_i$ in $\TF$ such that for each $i\in\nat$ there is a sequence of length $i$ of separator vertices on $R$ that intersect non-trivially.

By Lemma~\ref{Non-LocEndsAreGlobal} there is a global end $\omega$ of~$G$ defined by~$R$.
By Lemma~\ref{TranslExists} there is an automorphism of~$G$ that acts on~$\TF$ like a translation.
The unique double ray of~$\TF$ fixed by that automorphism has an infinite subray $R'$.
Let $\omega'$ be the end of~$G$ defined by~$R'$.
Since $\Gamma$ acts transitively on the non-local ends, there is some $g\in\Gamma$ with $\omega^g=\omega'$.
But then we may assume that $R^g$ has only finitely many vertices distinct from $R'$ and thus we may also assume that $R^g\sub R'$.

If we finally show that on~$R'$ any vertex of~$G$ lies in only $m$ separator vertices of~$R'$ for a constant $m$, then we get a contradiction and this would prove the lemma.
But if there is a sequence $\sequ{x}$ such that $x_i$ lies in at least $i$ separators on~$R'$, then each separator must contain infinitely many vertices, as the translation maps any separator at most $2$ separators apart, in contradiction to the definition of the $\SF$-separators.
\end{proof}

\begin{Lem}\label{upperBoundSemiRegTree}
Let $G$ be quasi-isometric to a semi-regular tree $T$.
Then the set of all $\SF$-blocks and all $\SF$-slices has bounded diameter.
\end{Lem}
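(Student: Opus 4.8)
The plan is to fix a quasi-isometry $\varphi\colon G\to T$ with constants $C\geq 1$, $D\geq 0$, and to argue by contradiction, using that $\varphi$ lets us transport the tree geometry of $T$ back to $G$. First I record the reductions. Since $G$, and hence $T$, has infinitely many ends, $T$ has minimum degree at least $2$, so every vertex of $T$ has a ray leaving it. By Lemma~\ref{nearlySemi-regular} there are only finitely many $\Gamma$-orbits of $\SF$-blocks, and blocks in one orbit are isometric; hence if the block diameters were unbounded a single $\SF$-block would already have infinite diameter. For slices I cannot bound the number of orbits, so I instead translate: if slice diameters are unbounded then, using that $\Gamma$ acts transitively on the $\SF$-separators, I move pairs realising large diameters to a fixed separator $S^{*}$ and consider the union $Y$ of $S^{*}$ with all slices hanging off $S^{*}$; this $Y$ is connected, has infinite diameter, and is separated from the rest of $G$ by the finite set $S^{*}$. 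In either situation I am reduced to a connected set $Y\subseteq G$ of infinite diameter whose \emph{boundary} $U$ (the union of the $\SF$-separators bounding a single block, which has finite diameter by Lemma~\ref{DistanceOfSepSmall}; or the single separator $S^{*}$) has finite diameter and separates $Y$ from $G-Y$.

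Next I transport this to $T$. The image $\varphi(U)$ lies in a ball $B_{\rho}(t_{0})$ of $T$, while $\varphi(Y)$ has infinite diameter because $\varphi$ distorts distances only affinely. The crucial structural observation is that, since $U$ separates $Y$ from $G-Y$ and $T$ is a tree, the branches of $T$ at $t_{0}$ split into two disjoint families: those reached by the deep part of $\varphi(Y)$ and those reached by the deep part of $\varphi(G-Y)$. Indeed, if a vertex $w\in Y$ and a vertex $z\notin Y$ had images lying far out in a common branch, then by coarse density of $\varphi(G)$ in $T$ I could track the geodesic of $T$ joining $\varphi(w)$ and $\varphi(z)$ (which stays in that branch, away from $t_{0}$) by an actual $w$-$z$ walk in $G$ whose vertices all have images far from $\varphi(U)$, hence lie far from $U$; such a walk would avoid $U$ altogether, contradicting that $U$ separates $w$ from $z$.

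The heart of the argument is to produce a genuine metric ray inside $Y$. Coarse density gives that every point lying deep in a $Y$-branch is within $D$ of $\varphi(u)$ for some $u\in G$; by the previous paragraph such a $u$ must lie in $Y$, so $\varphi(Y)$ is coarsely dense in each $Y$-branch. Fixing a ray of a $Y$-branch towards an end $\eta$ of $T$ (available since $T$ has minimum degree at least $2$), I choose vertices of $Y$ whose images track this ray with bounded gaps and join consecutive ones by paths inside the connected set $Y$, keeping the paths near the $T$-ray again by coarse density; extracting a ray from the resulting walk yields $R\subseteq Y$ with $\varphi(R)\to\eta$. Since distances in $T$ along $\varphi(R)$ tend to infinity, every infinite subset of $R$ has infinite diameter, so $R$ is a metric ray.

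Finally I derive the contradiction. The metric ray $R$ defines a non-local end $\omega$ of $G$, which by the standing hypotheses is a thin global end and hence, by Lemma~\ref{Non-LocEndsAreGlobal}, corresponds to an end of $\TF$. If $Y$ is a single $\SF$-block $X$ then $R\cap X$ is infinite, so $R$ corresponds to the block-vertex $X$ of $\TF$ and not to an end of $\TF$, a contradiction. If instead $Y$ is the union of slices at $S^{*}$, then $R$, being a metric ray, meets the finite set $S^{*}$ only finitely often and so eventually lies in a single slice; but a slice contains no $\SF$-separator in its interior and therefore no non-local end, again contradicting the existence of $R$. I expect the main obstacle to be the construction in the third paragraph: in the non-locally-finite setting König's lemma is unavailable, so the passage from infinite diameter to an actual metric ray must be carried out entirely through the coarse density of the quasi-isometry and the tree geometry of $T$, with careful bookkeeping of the constants $C$, $D$, $\rho$ to guarantee that the tracking walks both stay inside $Y$ and escape every bounded ball around $t_{0}$.
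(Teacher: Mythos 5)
Your proposal is correct and follows essentially the same route as the paper: split into the two cases of an unbounded $\SF$-block and of unbounded $\SF$-slices hanging off a fixed separator, push the offending set forward along the quasi-isometry, locate a ray of $T$ in a branch beyond the bounded image of the separating set, and pull it back to a metric ray (hence a non-local end) inside the block or slice, contradicting Lemma~\ref{Non-LocEndsAreGlobal}. You merely make explicit two steps the paper leaves implicit -- the dichotomy of branches of $T$ into those reached deeply by $Y$ and by $G-Y$, and the extraction of an actual metric ray via coarse density -- which is a welcome but not essentially different elaboration.
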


\begin{proof}
Suppose the lemma is false.
Let us first assume that the set of all $\SF$-blocks has bounded diameter.
By Lemma~\ref{nearlySemi-regular} there is an $\SF$-block $X$ that has no finite diameter.
Then with Lemma~\ref{DistanceOfSepSmall} there is a sequence $\seq{x}{i}{\nat}$ in~$X$ such that $\min\set{d(x_i,S)\mid S\text{ $\SF$-separator}}\ge i$ for all $i\in\nat$.
Let $x$ be a vertex in $S$ for an $\SF$-separator $S\sub X$ and let $t\in VT$ be the vertex with $x^\varphi=t$ for the quasi-isometry $\varphi:G\to T$.
Let $\seq{t}{i}{\nat}$ be a sequence in $VT$ with $x_i^\varphi=t_i$.
This sequence has an infinite subsequence $\seq{t}{i}{I}$ with $I\sub\nat$ of pairwise distinct elements.
By Lemma~\ref{DistanceOfSepSmall} there is $d(x,y)<M$ for some constant $M<\infty$ and all $y\in S'$ with $S'$ $\SF$-separator in $X$.
Thus there is an $r\in\nat$ such that $B_r(t)$ contains the images of all the vertices $y\in S'$ for all $\SF$-separators $S'\sub X$.
Then there is a component $C$ of $T-B_r(t)$ that contains at least one vertex $t_i$.
Since $T$ is a semi-regular tree, $C$ contains a ray $R$.
Let $R'$ be a set of vertices in~$X$ such that there is an $r'\in\nat$ with $R\sub B_{r'}((R')^\varphi)$.
As $G$ is quasi-isometric to~$T$, there is at least one non-local end of~$G$ defined by~$R'$ and this has to lie in~$X$, a contradiction to Lemma~\ref{Non-LocEndsAreGlobal}.

So let us assume that the $\SF$-slices have unbounded diameter.
Let $S$ be an $\SF$-separator and let $\YF$ be the set of all those slices $Y$ such that $Y$ is a component of $G-S$.
Then $X:=\bigcup \YF$ contains no metric ray as otherwise there would be a non-local end in $X$ but all such ends corresponds to ends of $\TF$ by Lemma~\ref{Non-LocEndsAreGlobal}.
Thus there is no ray of~$T$ whose preimages lie in~$X$ by a similar argument as in the first case.
Hence $X$ has only finite diameter.
\end{proof}

\begin{Lem}\label{upperBoundGeodRays}
Assume that there is some $\delta\ge 0$ such that any vertex of~$G$ lies at distance at most $\delta$ to the union of all geodetic double rays.
Then the set of all $\SF$-blocks and all $\SF$-slices has bounded diameter.
\end{Lem}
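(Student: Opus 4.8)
The plan is to follow the structure of the proof of Lemma~\ref{upperBoundSemiRegTree}, replacing the quasi-isometry to a semi-regular tree by the hypothesis that geodetic double rays are $\delta$-dense. The crucial observation is that a geodetic double ray is, in each of its two directions, a metric ray: since the distance along the ray coincides with the distance in $G$, no infinite set of its vertices has finite diameter. Hence both ends of any geodetic double ray are non-local, and by Lemma~\ref{Non-LocEndsAreGlobal} they correspond to ends of $\TF$. Consequently no ray of a geodetic double ray can stay inside a single $\SF$-block $X$: otherwise its (non-local) end would meet $X$ infinitely and thus correspond to the vertex $X$ of $\TF$ rather than to an end of $\TF$, contradicting Lemma~\ref{Non-LocEndsAreGlobal}. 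I also record two bounds: each $\SF$-separator is a finite vertex set lying in a single $\Aut(G)$-orbit, so all separators are isometric and share one finite diameter $d_0$; and by Lemma~\ref{DistanceOfSepSmall} any two $\SF$-separators in a common block have distance at most some $m$, so the union of all separators in one block has diameter at most $D_0:=2d_0+m$.

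For the $\SF$-blocks, suppose for contradiction that their diameters are unbounded. By Lemma~\ref{nearlySemi-regular} the blocks form at most two $\Gamma$-orbits, and blocks in one orbit are isometric, so unboundedness forces a single block $X$ of infinite diameter. The separators contained in $X$ lie in a ball of radius at most $D_0$; since $\diam X=\infty$, I can choose a sequence $\seq{x}{i}{\nat}$ in $X$ with $d(x_i,S')\ge i$ for every $\SF$-separator $S'\sub X$. For each $i$ the hypothesis yields a geodetic double ray $D_i$ and a vertex $p_i\in D_i$ with $d(x_i,p_i)\le\delta$. By the observation above both rays of $D_i$ eventually leave $X$, hence each crosses an $\SF$-separator of $X$; let $a$ and $b$ be such crossing vertices, one on each side of $p_i$ along $D_i$. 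As every separator vertex has distance at least $i$ from $x_i$, we get $d(p_i,a),d(p_i,b)\ge i-\delta$, and since $D_i$ is geodetic and $a,b$ lie on opposite sides of $p_i$, we have $d(a,b)=d(a,p_i)+d(p_i,b)\ge 2(i-\delta)$. But $a$ and $b$ lie on separators of the common block $X$, so $d(a,b)\le D_0$; for $i$ large this is a contradiction.

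For the $\SF$-slices, suppose their diameters are unbounded. Fix an $\SF$-separator $S$, let $\YF$ be the set of slices that are components of $G-S$, and put $X:=\bigcup\YF$. Exactly as in Lemma~\ref{upperBoundSemiRegTree}, $X$ contains no metric ray, since such a ray would define a non-local end not corresponding to any end of $\TF$. Unbounded slice diameters (using the transitivity of $\Gamma$ on the separators) make $\diam X=\infty$, so I can pick $x_i\in X$ with $d(x_i,S)\ge i$, and again a geodetic double ray $D_i$ with $p_i\in D_i$ and $d(x_i,p_i)\le\delta$. Since $X$ contains no metric ray, both rays of $D_i$ leave $X$; as $S$ is the only boundary of $X$ in $G$, each crossing vertex lies on $S$, giving $a,b\in S$ with $d(p_i,a),d(p_i,b)\ge i-\delta$ on opposite sides of $p_i$, whence $d(a,b)\ge 2(i-\delta)$. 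This contradicts $d(a,b)\le d_0$ for large $i$. Hence $X$, and therefore by transitivity every $\SF$-slice, has bounded diameter.

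The main obstacle is the step asserting that both rays of $D_i$ must exit the region $X$: this is where the geometry of $\TF$ enters, through the fact that geodetic rays are metric and that non-local ends correspond to ends (not vertices) of $\TF$ by Lemma~\ref{Non-LocEndsAreGlobal} (for blocks), together with the ``no metric ray'' property of $\bigcup\YF$ (for slices). Once exit is guaranteed, the geodesic property turns the large depth of $x_i$ into a large distance between the two exit points, which the bounded diameter of the separators they lie on cannot accommodate.
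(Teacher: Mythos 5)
Your proof is correct and follows essentially the same route as the paper's: both rest on Lemma~\ref{DistanceOfSepSmall} (separators in a common block are mutually close), on the fact that geodetic double rays are metric rays whose ends correspond to ends of~$\TF$ and hence must exit every $\SF$-block and every slice in both directions (Lemma~\ref{Non-LocEndsAreGlobal}), and on the geodesic property converting the depth of a vertex into a large distance between the two exit points, which the bounded separator diameter cannot accommodate. Your contradiction argument in fact spells out the exit-point step that the paper's very terse proof leaves implicit, and you get by without the appeal to Lemmas~\ref{TranslExists} and~\ref{PW2.5} that the paper makes.
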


\begin{proof}
By Lemmas~\ref{TranslExists} and \ref{PW2.5} the intersection of each $\SF$-separator with all geodetic double rays is not empty.
As all $\SF$-separators have the same diameter and the distance between any two $\SF$-separators in the same $\SF$-block is bounded by Lemma~\ref{DistanceOfSepSmall}, there is an upper bound on the distance from each vertex in any $\SF$-block to any $\SF$-separator.
Additionally the distance from each vertex of any $\SF$-slice to any $\SF$-separator is bounded by $\delta+s$ where $m$ denotes the diameter of any $\SF$-separator.
\end{proof}

All the previously proved lemmas enable us to prove Theorem~\ref{MTm10}.

\begin{Tm}\label{MTm10}
Let $G$ be a connected graph with infinitely many non-local ends such that $\Gamma:=\Aut(G)$ acts transitively on the non-local ends and such that $\Gamma$ fixes no vertex set of finite diameter.
Then the following assertions holds:
\begin{enumerate}[(a)]
\item For every $x\in VG$ there is an $r\in\nat$ such that $G(x,r)$ covers all geodetic double rays of $G$.
\item For every $x\in VG$ there is an $R\in\nat$ such that the graph $G-G(x,R)$ contains no metric ray of~$G$.
\item The following statements are equivalent:
\begin{enumerate}[(i)]
\item There is an $r\in\nat$ such that every vertex is $r$-close to some geodetic double ray;
\item $\Gamma$ acts metrically almost transitively on $G$;
\item there is a $\Gamma$-congruence $\pi$ such that a vertex set of finite diameter meets every congruence class of $\pi$;
\item $G$ is quasi-isometric to any basic structure tree of~$G$;
\item $G$ is quasi-isometric to a semi-regular tree with minimum degree $2$.
\end{enumerate}
\item All of the properties of part~(c) hold for the subgraph $G(x,R)$ of property~(b).
\end{enumerate}
\end{Tm}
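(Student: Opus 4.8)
The plan is to reduce the whole of part~(c) to the single structural condition
$$(\dagger)\qquad\text{every }\SF\text{-block and every }\SF\text{-slice has bounded diameter,}$$
and to deduce parts (a), (b), (d) from separator-crossing and truncation arguments. First I would record the geometric fact underlying everything: fix a separator $S_0$ and a vertex $x_0\in S_0$. Since $\SF$ is basic, $\Gamma$ is transitive on the $\SF$-separators, so the orbit $\Gamma x_0$ meets every separator; hence every separator vertex lies within $\diam(S_0)$ of $\Gamma x_0$, and for arbitrary $x$ within $R_0:=\diam(S_0)+d(x,x_0)$ of $\Gamma x$. Thus all separator vertices lie in $G(x,R_0)$. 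For (b), a metric ray defines a non-local (hence global, by Lemma~\ref{Non-LocEndsAreGlobal}) end, which corresponds to an end of~$\TF$ and so crosses infinitely many $\SF$-separators; each crossing puts a vertex of the ray into $G(x,R_0)$, so no metric ray of~$G$ lies in $G-G(x,R_0)$. For (a), a geodetic double ray is a metric double ray, so both its tails correspond to ends of~$\TF$; two consecutive separators it crosses lie in a common $\SF$-block, hence are at distance at most $m$ by Lemma~\ref{DistanceOfSepSmall}, so the geodesic segment between two consecutive crossing vertices has length at most $m+2\diam(S_0)$. Every vertex of the double ray is therefore within a uniform distance of a separator vertex, hence within some $r$ of $\Gamma x$, which is (a).

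For part~(c) I would prove (i)$\Rightarrow(\dagger)$ and (v)$\Rightarrow(\dagger)$ as direct quotations of Lemmas~\ref{upperBoundGeodRays} and~\ref{upperBoundSemiRegTree}. The central step is $(\dagger)\Rightarrow$(iv): using that all $\SF$-separators have the same finite diameter, that consecutive separators in a block are at bounded distance (Lemma~\ref{DistanceOfSepSmall}), and that blocks and slices have bounded diameter, the map sending each vertex of~$G$ to the $\TF$-vertex (block or slice) containing it is a quasi-isometry onto~$\TF$, because a $G$-geodesic crosses a number of separators comparable to its length. Then (iv)$\Rightarrow$(v) follows from Lemma~\ref{nearlySemi-regular}, since $\TF$ is semi-regular or a subdivision of a semi-regular tree, and as $\TF$ has infinitely many ends (Lemma~\ref{TFHasEnd}) its minimum degree is at least~$2$, while a bounded subdivision is quasi-isometric to the tree itself. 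For (v)$\Rightarrow$(i): under $(\dagger)$ every vertex is within a bounded distance of a separator vertex, and by Lemmas~\ref{TranslationExists} and~\ref{GeodDoubleRays} every separator meets a geodetic double ray, so every vertex is uniformly close to a geodetic double ray. This closes the cycle (i)$\Leftrightarrow$(iv)$\Leftrightarrow$(v)$\Leftrightarrow(\dagger)$.

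To incorporate (ii) and (iii) I would show (v)$\Rightarrow$(ii), (ii)$\Rightarrow(\dagger)$, and (ii)$\Leftrightarrow$(iii). For (v)$\Rightarrow$(ii): transitivity on separators makes the orbit of a separator vertex coarsely dense, and since by $(\dagger)$ every vertex lies within a uniform distance of a separator, $\Gamma$-invariance of the metric propagates this to every orbit, giving a single $r$ with $G(y,r)=G$ for all~$y$. For (ii)$\Rightarrow(\dagger)$: if a block had unbounded diameter, then by Lemma~\ref{DistanceOfSepSmall} its separators cluster in a bounded region, so some vertex $v$ has distance $>r$ from every separator; then no image $v^\varphi$ lies within $r$ of any separator vertex, so $\Gamma v$ misses the $r$-balls about separators and $G(v,r)\ne G$, a contradiction; the same argument bounds slice diameters. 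For (ii)$\Leftrightarrow$(iii), taking $\pi$ to be the partition of $VG$ into $\Gamma$-orbits and $F=B_r(x)$ shows (ii)$\Rightarrow$(iii), since metric almost transitivity forces a representative of every orbit into $B_r(x)$; conversely a $\Gamma$-congruence with a finite-diameter transversal $F$ forces $VG=\Gamma F$, whereupon invariance of distances makes every orbit $\diam(F)$-dense, which is (ii).

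Finally, for (d) I would set $H:=G(x,R)$ with $R$ as in~(b). As $\Gamma x$ is $\Gamma$-invariant, $H$ is a $\Gamma$-invariant connected subgraph (enlarging $R$ if needed to secure connectivity) containing every $\SF$-separator, so it carries the same non-local ends as~$G$ and inherits the standing hypotheses; moreover every vertex of $H$ lies within $R+\diam(S_0)$ of a separator, so each block and slice intersected with $H$ has bounded diameter, i.e.\ $(\dagger)$ holds for~$H$. Applying the equivalences already established to~$H$ then yields all properties of~(c) for $H$, proving~(d). I expect the principal obstacle to be the quasi-isometry $(\dagger)\Rightarrow$(iv): one must verify with explicit constants that collapsing every bounded block and slice to its $\TF$-vertex distorts distances only boundedly, and must reconcile this with the subdivision ambiguity of Theorem~\ref{UniqueTree} and with pinning the resulting semi-regular tree's minimum degree to exactly~$2$; a secondary subtlety is ensuring that the notion of $\Gamma$-congruence in~(iii) is strong enough that a bounded transversal forces $VG=\Gamma F$.
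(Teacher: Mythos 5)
Your proposal is correct and follows essentially the same route as the paper: parts (a) and (b) via the separator-covering argument built on Lemma~\ref{DistanceOfSepSmall} and Lemma~\ref{Non-LocEndsAreGlobal}, part (c) via Lemmas~\ref{upperBoundGeodRays}, \ref{upperBoundSemiRegTree}, \ref{nearlySemi-regular} and \ref{GeodDoubleRays} (your hub condition $(\dagger)$ is exactly what those two ``upper bound'' lemmas deliver), and (d) deduced from (b) together with the equivalences of (c). The one obstacle you flag --- verifying that collapsing bounded blocks and slices in $(\dagger)\Rightarrow$(iv) distorts distances only boundedly in the lower direction --- is precisely what Lemma~\ref{lowerBound} supplies, and it is the lemma the paper invokes at that point.
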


\begin{proof}
Let $\SF$ be a basic cut system and let $\TF$ be the structure tree of~$G$ and~$\SF$.
Let $m$ be the constant of Lemma~\ref{DistanceOfSepSmall} and let $s$ be the diameter of any $\SF$-separator.
Let $B$ be the $(2m+2s)$-ball around an arbitrary $\SF$-separator $S$.
Then $B$ covers the intersection of each geodetic double rays in $G$ with every $\SF$-block $X$ with $S\sub X$ and $B$ covers also the intersection of each geodetic double rays with every $\SF$-slices $Y$ such that $Y$ is a component of $G-S$.
So if we set $d$ as the minimum over all $d(x,S)$ for $\SF$-separators $S$, then the statement of part~(a) holds for $r:=2m+2s+d$.

If we set $R:=2m+2s+d$ (where $d$ denotes the same value as before), then any $R$-ball around $x$ covers all $\SF$-separators and since no metric ray lies in any $\SF$-block by Lemma~\ref{Non-LocEndsAreGlobal} and by definition no metric ray lies in any $\SF$-slice. Thus we just proved part (b).

The assertion (d) is an immediate consequence of (b) and (c) as the graph $G(x,R)$ of (b) is by construction a metrically almost transitive graph.
So we just have to prove the equivalences of~(c).
The equivalence of (i) and (ii) follows with Lemma~\ref{GeodDoubleRays} and Lemma~\ref{DistanceOfSepSmall} from the definition of metrically almost transitive graphs.
The equivalence of~(ii) and (iii) is just the definition of metrically almost transitive graphs.

By the Lemmas~\ref{lowerBound} and \ref{upperBoundGeodRays} the condition (i) of this theorem immediately implies (iv).
So let us assume that (iv) holds, that is $G$ is quasi-isometric to~$\TF$.
Then there is some constant $k$ such that every vertex of~$G$ lies at distance at most~$k$ to some $\SF$-separator.
By Lemma~\ref{GeodDoubleRays} each $\SF$-separator meets some geodetic double ray of~$G$ and thus if the $\SF$-separators have diameter $s$, then every vertex of~$G$ lies at most $s+k$ apart from any geodetic double ray.

As the last part of the proof of Theorem~\ref{MTm10} we show the equivalence of~(iv) and (v).
If $G$ is quasi-isometric to a semi-regular tree, then the Lemmas~\ref{lowerBound} and \ref{upperBoundSemiRegTree} imply that $G$ is quasi-isometric to any basic structure tree of~$G$.
So let us assume that $G$ is quasi-isometric to a basic structure tree $\TF$.
By Lemma~\ref{nearlySemi-regular} the structure tree $\TF$ is quasi-isometric to a semi-regular tree and thus $G$ is also quasi-isometric to a semi-regular tree.
\end{proof}

If we just require $\Gamma$ to act transitively on the ends of~$G$ instead of the non-local ends of~$G$, we get Theorem~\ref{MTm10Short} as a corollary of Theorem~\ref{MTm10}.

We finish this section with an observation: Let $G$ be a graph as in Theorem~\ref{MTm10Short} and let $H$ be any rayless graph. If we add to each vertex $y\in VG$ a copy $H_y$ of~$H$ with an additional edge, then for the constructed graph $G'$ there is a component of $G-G(x,R)$ (where $x$ and $R$ are as in the theorem) that is isometric to $H$.
So any rayless graph can occur as a component of $G-G(x,R)$.

\section{On the stabilizer of an end}

For this section let $G$ be a connected graph with infinitely many non-local ends such that the stabilizer of some end $\omega$ acts transitively on the vertices of~$G$.
Let $\Gamma:=\Aut(G)$.
Let $\SF$ be a $\Gamma_\omega$-invariant, not necessarily $\Gamma$-invariant, basic cut system such that every $\SF$-separation separates ends of~$G$ and let $\TF$ be the structure tree of~$G$ and~$\SF$.
We remark that the end $\omega$ has to be a global end since otherwise it would be dominated by some vertex $x$ and thus by every vertex as $\Gamma_\omega$ acts transitively on the vertices of~$G$.

To prove Theorem~\ref{mainTmPart2OfM} and Theorem~\ref{mainTmPart2OfMb} we prove some lemmas.

\begin{Lem}\label{endToEnd}
The end $\omega$ of~$G$ corresponds to an end of $\TF$ and not to a vertex of~$\TF$.
\end{Lem}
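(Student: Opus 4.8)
The plan is to use the dichotomy recorded just before the lemma: every ray of~$G$, and hence the end~$\omega$, corresponds either to a block $X$ of~$\TF$ or to an end of~$\TF$. I would argue by contradiction, assuming that $\omega$ corresponds to a block~$X$, and derive that $X$ would then have to contain every vertex of~$G$, which is impossible for a block as soon as $\SF$ contains a single cut.

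First I would exploit the $\Gamma_\omega$-invariance of~$\SF$: this makes $\Gamma_\omega$ act on the set of $\SF$-separators and $\SF$-blocks, and hence as a group of automorphisms of~$\TF$. Since the block to which a given end corresponds is unique, any $\gamma\in\Gamma_\omega$ maps $X$ to the block corresponding to $\omega^\gamma=\omega$, that is, back to~$X$. Thus $X$ is fixed setwise by all of~$\Gamma_\omega$, and in particular $V(X)$ is a non-empty (indeed infinite, since the rays in~$\omega$ meet $X$ in an infinite set) $\Gamma_\omega$-invariant subset of~$VG$. Because $\Gamma_\omega$ acts transitively on~$VG$, this forces $V(X)=VG$.

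To finish, I would invoke the definition of an $\SF$-block. Condition~(i) requires that for every $(A,B)\in\SF$ either $V(X)\sub A$ or $V(X)\sub B$. But $\SF$ is non-empty, so I may pick a cut $(A,B)\in\SF$; by the definition of a cut its wings $G[A\sm B]$ and $G[B\sm A]$ are non-empty, whence $A\ne VG\ne B$. With $V(X)=VG$ neither inclusion can hold, contradicting~(i). Hence $\omega$ cannot correspond to a block and must correspond to an end of~$\TF$.

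The step I expect to require the most care is establishing that $X$ is genuinely $\Gamma_\omega$-invariant: this rests on the uniqueness of the vertex or end of~$\TF$ to which a fixed end of~$G$ corresponds, together with the fact that $\Gamma_\omega$ preserves the whole structure-tree correspondence precisely because $\SF$ is $\Gamma_\omega$-invariant. Once invariance is in place, transitivity of~$\Gamma_\omega$ on~$VG$ and the non-emptiness of the wings of any cut make the contradiction immediate; note that globalness of~$\omega$ is not needed for this particular argument.
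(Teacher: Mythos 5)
Your argument is correct, but it takes a genuinely different route from the paper's. You exploit the uniqueness of the vertex or end of~$\TF$ to which $\omega$ corresponds: since $\SF$ is $\Gamma_\omega$-invariant, $\Gamma_\omega$ acts on~$\TF$ and must fix the putative block $X$ setwise, whence vertex-transitivity of $\Gamma_\omega$ forces $V(X)=VG$, violating condition~(i) in the definition of an $\SF$-block because every $\SF$-separation has two non-empty wings. The paper instead works locally with a single separator: it picks a separation $(A,B)\in\SF$ with $A\cap B\sub X$ (condition~(ii) of the block definition), a vertex $x\in A\cap B$, and a vertex $y$ separated from $\omega$ by $A\cap B$, and maps $y$ onto $x$ by some $\alpha\in\Gamma_\omega$; then $(A\cap B)^\alpha$ is an $\SF$-separator separating the vertex $x$ of~$X$ from $\omega$, which is impossible when $\omega$ corresponds to~$X$. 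Both proofs rest on the same two ingredients --- vertex-transitivity of $\Gamma_\omega$ and $\Gamma_\omega$-invariance of~$\SF$ --- but yours replaces the paper's explicit separator manipulation by the global observation that a $\Gamma_\omega$-invariant block must swallow the whole vertex set; this makes the final contradiction more transparent (only non-emptiness of the wings is needed), at the cost of having to justify that the end-to-block correspondence is $\Gamma_\omega$-equivariant, which you correctly identify as the delicate point and which does follow from the definition of ``corresponds'' together with the invariance of~$\SF$.
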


\begin{proof}
Let us suppose that $\omega$ corresponds to a block vertex $X$ of~$\TF$.
Let $x\in A\cap B$ for some separation $(A,B)\in\SF$ with $A\cap B\sub X$.
Let $y$ be some vertex of~$G$ not in $A\cap B$ which is separated by $A\cap B$ from $\omega$.
Since $\Gamma_\omega$ acts transitively on $VG$, there is some $\alpha\in\Gamma_\omega$ with $y^\alpha=x$.
Then $(A\cap B)^\alpha$ is a separator separating $x=y^\alpha$ from $\omega$, a contradiction to the choice of $A\cap B$ and $x$.
Hence $\omega$ corresponds to an end of~$\TF$.
\end{proof}

\begin{Lem}\label{DiamBound}
The diameter of all $\SF$-blocks is globally bounded.
Furthermore each vertex of an $\SF$-block $X$ has distance at most $1$ to that separator that separates $X$ from $\omega$.
\end{Lem}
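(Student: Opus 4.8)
The plan is to exploit the hypothesis that $\Gamma_\omega$ acts transitively on $VG$ together with Lemma~\ref{endToEnd}, which tells us that $\omega$ corresponds to an end of~$\TF$ rather than to a block vertex. Fix an $\SF$-block $X$. By Lemma~\ref{endToEnd} there is a unique $\SF$-separator $S_X$ adjacent to $X$ in~$\TF$ that lies on the ray in~$\TF$ from $X$ towards the end corresponding to~$\omega$; this is the separator that separates $X$ from~$\omega$. I would first establish the second sentence of the lemma (the distance-$1$ claim), since the diameter bound will follow from it almost immediately once we know all blocks lie in a single $\Gamma_\omega$-orbit.

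**The distance-$1$ claim.**
Suppose for contradiction that some vertex $v$ of some $\SF$-block $X$ has distance at least $2$ from $S_X$. The key move is to use vertex-transitivity of $\Gamma_\omega$ to compare $v$ with a vertex on $S_X$. Pick $w\in S_X$ and an $\alpha\in\Gamma_\omega$ with $w^\alpha=v$. Since $\alpha$ fixes $\omega$, it permutes the $\SF$-separators consistently with the orientation of~$\TF$ towards~$\omega$, so $S_X^\alpha$ is again a separator that separates its adjacent block from $\omega$, and it passes through $v=w^\alpha$. The idea is that, because $v$ lies deep inside $X$ (distance $\ge 2$ from $S_X$), the separator $S_X^\alpha$ through $v$ must cross $S_X$ or be trapped in a way that contradicts the minimal nestedness of the basic cut system. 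Concretely I would argue that a separator through $v$ separating the $\omega$-side from the non-$\omega$-side forces a new separator strictly between $S_X$ and the far boundary of $X$, violating either the block structure (that $X$ is a maximal subgraph not cut by any separation) or the fact that two $\SF$-separators in a common block do not cross. This yields the contradiction and shows every vertex of $X$ is within distance $1$ of $S_X$.

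**The diameter bound.**
Granted the distance-$1$ claim, I would finish as follows. Every vertex of $X$ is at distance at most $1$ from $S_X$, so $\mathrm{diam}(X)\le 2\,\mathrm{diam}(S_X)+2\le 2s+2$, where $s$ is the common diameter of the $\SF$-separators (these all have finite order $n$ and are a single $\Gamma$-orbit, or at least a single $\Gamma_\omega$-orbit, so $s<\infty$). Because $\Gamma_\omega$ acts transitively on $VG$, every $\SF$-block is the image of a fixed one under an element of $\Gamma_\omega$ that respects $\omega$, and hence every block has the same diameter bound; this gives the claimed global bound on block diameters.

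**Main obstacle.**
I expect the real difficulty to be the distance-$1$ claim, specifically ruling out that the translated separator $S_X^\alpha$ through the interior vertex $v$ produces a legitimate new separator inside $X$ without crossing $S_X$. The clean contradiction presumably comes from the defining property of an $\SF$-block---that no separation in $\SF$ cuts it properly---combined with the non-crossing characterization of minimal nested separations quoted from~\cite{DK}. Making the orientation argument precise (that $\alpha\in\Gamma_\omega$ sends $\omega$-separating separators to $\omega$-separating separators, and that $v$ being interior forces the image separator into a forbidden position) is the step I would spend the most care on.
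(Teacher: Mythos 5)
Your overall strategy -- use the vertex-transitivity of $\Gamma_\omega$ to transport a separator close to the offending vertex and derive a contradiction, then get the global diameter bound from the fact that all $\SF$-separators lie in one orbit -- is the same as the paper's. But the step you yourself flag as the main obstacle is a genuine gap, and it stems from your choice of which vertex to move. You pick $w\in S_X$ and $\alpha\in\Gamma_\omega$ with $w^\alpha=v$, so all you obtain is a separator $S_X^\alpha$ that \emph{contains} $v$. Since $v\in S_X^\alpha$, this separator does not separate $v$ from $\omega$, and nothing forbids its existence: $v$, sitting at distance $2$ from $S_X$ inside $X$, may perfectly well lie on one of the separators adjacent to $X$ on the side away from $\omega$, and $S_X^\alpha$ (separating the block $X^\alpha$ from $\omega$) can be exactly such a ``downward'' separator of $X$. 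There is no crossing and no violation of nestedness or of the block axioms, so the contradiction you hope for does not materialize.

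The paper's proof repairs precisely this point: instead of a vertex of $S_X$, take a vertex $y\in X$ with $d(y,S_X)=1$ and $y\notin S_X$, and choose $\alpha\in\Gamma_\omega$ with $y^\alpha=x$ for the supposed vertex $x$ with $d(x,S_X)=2$. Since $S_X$ genuinely separates $y$ from $\omega$ and $\alpha$ fixes $\omega$, the image $S_X^\alpha$ is an $\SF$-separator that separates $x$ from $\omega$, does not contain $x$, and lies at distance exactly $1$ from $x$. This contradicts the fact that every $\SF$-separator separating $x$ from $\omega$ lies at distance at least $d(x,S_X)=2$ from $x$ (every such separator is $S_X$ itself or lies beyond $S_X$ on the $\omega$-side, by nestedness and the uniqueness statement of Lemma~\ref{UniqueDetOfBlocks}). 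Your concluding step is fine, though the appeal to transitivity on blocks is unnecessary: once every vertex of $X$ is within distance $1$ of $S_X$ one gets $\diam(X)\le\diam(S_X)+2$, and since all $\SF$-separators belong to a single orbit they share a common finite diameter, which already gives the global bound.
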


\begin{proof}
Let $X$ be some $\SF$-block and let $(A,B)$ be a separation in~$\SF$ with $A\cap B\sub X$ such that $A\cap B$ separates $X$ from $\omega$.
We will prove that any vertex $x\in X\setminus (A\cap B)$ has distance $1$ to $A\cap B$.

Suppose $d(x,A\cap B)=2$.
Let $y$ be a vertex in $X$ with $d(y,A\cap B)=1$.
There is some $\alpha\in\Gamma_\omega$ with $y^\alpha=x$.
Then $(A\cap B)^{\alpha\inv}$ is a separator with distance $1$ to $x$ such that $x$ and $\omega$ are separated by $(A\cap B)^{\alpha\inv}$.
By the choice of $x$ and $(A,B)$ this is a contradiction.
Thus we know that $\diam(X)\leq\diam(A\cap B)+2$.
Since all $\SF$-separations have the same order, the claim follows.
\end{proof}

\begin{Lem}\label{UniqueDetOfBlocks}
For every $\SF$-block $X$ there is a vertex $x$ such that $x$ determines the block $X$ (with respect to the end $\omega$).
Additionally for every vertex $x$ there is a unique $\SF$-separator $S$ separating $x$ from $\omega$ with $x\notin S$.
\end{Lem}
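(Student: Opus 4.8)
Lemma \ref{UniqueDetOfBlocks} makes two claims. I read the first as: for every $\SF$-block $X$ there is some vertex $x$ (which I take to be a vertex of the separator that separates $X$ from $\omega$) whose position relative to $\omega$ pins down $X$; the second says each vertex $x$ is cut off from $\omega$ by a unique $\SF$-separator not containing $x$.

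Let me think about what "$x$ determines the block $X$ with respect to $\omega$" should mean, and how both parts follow from the setup.

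The key structural facts I have available are Lemma \ref{endToEnd} (the end $\omega$ corresponds to an end of $\TF$, not a block) and Lemma \ref{DiamBound} (every vertex of an $\SF$-block $X$ has distance at most $1$ to the separator $S$ that separates $X$ from $\omega$). The transitivity of $\Gamma_\omega$ on $VG$ is the main engine throughout.

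Here is my plan for the proof.

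First I would prove the second claim, since the first will follow from it together with Lemma \ref{endToEnd}. Fix a vertex $x\in VG$. By Lemma \ref{endToEnd}, $\omega$ corresponds to an end of $\TF$, so there is a ray in $\TF$ from any block or separator towards $\omega$; in particular there exist $\SF$-separators separating $x$ from $\omega$. Among all such separators, consider one that is \emph{closest} to $\omega$ in $\TF$, that is, the one $S$ such that the $\SF$-block on the $\omega$-side of $S$ adjacent to $S$ in $\TF$ is not itself separated from $\omega$ by any further $\SF$-separator lying strictly between $x$ and $\omega$. To see that such an $S$ is unique, I would argue as follows: since $\omega$ corresponds to an end of $\TF$, the separators separating $x$ from $\omega$ are exactly those lying on the rays of $\TF$ from $x$'s block towards the end $\omega$, and because $\TF$ is a tree with a single end $\omega$ in the relevant direction, there is a unique "last" separator on the path from $x$ towards $\omega$ before one reaches $x$'s own block. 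The condition $x\notin S$ then selects precisely the separator adjacent to the block of $x$ on the $\omega$-side. The hard part will be ruling out that two distinct $\SF$-separators, neither containing $x$, could both separate $x$ from $\omega$ while being minimal with respect to $\omega$; here I would invoke the tree structure of $\TF$ together with Lemma \ref{endToEnd}, since any two such separators would have to lie on the single path towards the end $\omega$ in $\TF$, and minimality forces them to coincide.

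For the first claim, given an $\SF$-block $X$, let $S=A\cap B$ be the separator that separates $X$ from $\omega$ (the one guaranteed by condition (ii) in the definition of $\SF$-block, with $A\cap B\subseteq V(X)$), and pick any vertex $x\in S$. Then $X$ is recovered from $x$ as the unique $\SF$-block lying on the non-$\omega$ side of $S$ and containing $x$; by the uniqueness established in the second claim (applied to a vertex $y\in X\setminus S$, whose unique separating separator is exactly $S$), the block $X$ is the one determined by this data. I would phrase "$x$ determines $X$" precisely as: $X$ is the unique $\SF$-block with $x\in V(X)$ and with the property that the separator separating $X$ from $\omega$ contains $x$. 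The main obstacle is making the notion "determines" precise enough that the tree-theoretic uniqueness argument closes cleanly; once the second claim is in hand, the first is essentially a restatement via Lemma \ref{DiamBound}, which guarantees each block hugs its $\omega$-separator, so no ambiguity in the choice of block survives.
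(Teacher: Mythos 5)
The paper does not actually prove this lemma: its ``proof'' is a one-line citation to Section~6 of Dunwoody--Kr\"on, so you are reconstructing outsourced material rather than paralleling an argument. Your reconstruction identifies the right candidate objects, but it has a genuine gap: the whole argument rests on the fact that for each vertex $x$ of $G$ the set $T_x$ of $\TF$-vertices (blocks and separators) containing $x$ is a nonempty \emph{connected} subtree of $\TF$. That is precisely the content of [DK, Section~6] that the paper is citing, and you use it implicitly when you assert that ``the separators separating $x$ from $\omega$ are exactly those lying on the rays of $\TF$ from $x$'s block towards $\omega$'' --- but $x$ need not lie in a single block, and without connectivity of $T_x$ two incomparable separators could each separate $x$ from $\omega$ while neither contains $x$. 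Once connectivity of $T_x$ is in hand, the unique separator is the first one on the ray from $T_x$ to $\omega$ after leaving $T_x$; note also that, read literally, \emph{every} separator strictly beyond that one towards $\omega$ also separates $x$ from $\omega$ and avoids $x$, so the uniqueness claim only makes sense with this minimality built in. You half-acknowledge this (``while being minimal with respect to $\omega$'') but never resolve it, and your phrase ``closest to $\omega$'' points in the wrong direction along the ray.

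Your witness for the first claim is also wrong. If $S$ is the separator between $X$ and $\omega$ and you take $x\in S$, then $x$ lies in the block on the $\omega$-side of $S$ and in every other block adjacent to $S$ as well, so your proposed reading of ``$x$ determines $X$'' (the unique block containing $x$ whose $\omega$-separator contains $x$) fails whenever $S$ has two or more blocks on its non-$\omega$ side. The notion of ``determines'' has to be equivariant for the proof of Lemma~\ref{BlocksIso} to work, and the natural such notion is: $X$ is the $\omega$-nearest $\TF$-vertex containing $x$ (the gate of $T_x$ towards $\omega$). For that you must choose $x\in X\sm S$; connectivity of $T_x$ then shows no block on the $\omega$-side of $S$ contains $x$, so $X$ is indeed determined. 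So the correct skeleton is: prove (or cite) that $T_x$ is a nonempty subtree, derive the second claim as the uniqueness of the first separator outside $T_x$ on the ray to $\omega$, and derive the first claim from a vertex in $X\sm S$ --- not from a vertex of $S$.
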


\begin{proof}
These are direct consequences of Section~6 from \cite{DK}.
\end{proof}

\begin{Lem}\label{BlocksIso}
$\Gamma_{\omega}$ acts transitively on the $\SF$-blocks.
\end{Lem}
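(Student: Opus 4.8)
The plan is to show that $\Gamma_\omega$ acts transitively on the $\SF$-blocks by reducing the problem to the already-established transitivity of $\Gamma_\omega$ on vertices (which holds by hypothesis), using the correspondence between blocks and vertices furnished by Lemma~\ref{UniqueDetOfBlocks}. The key structural fact I would exploit is that each $\SF$-block is uniquely determined by a vertex with respect to $\omega$, and conversely each vertex singles out a unique $\SF$-separator (and hence block) separating it from $\omega$. The strategy is therefore to transport one block to another by first transporting a vertex that determines the first block to a vertex that determines the second.

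First I would fix two arbitrary $\SF$-blocks $X$ and $X'$. By Lemma~\ref{UniqueDetOfBlocks} there are vertices $x$ and $x'$ determining $X$ and $X'$ respectively (with respect to $\omega$). Since $\Gamma_\omega$ acts transitively on $VG$ by hypothesis, there is some $\alpha\in\Gamma_\omega$ with $x^\alpha=x'$. The crux is then to verify that $\alpha$ maps $X$ to $X'$. Because $\alpha$ fixes $\omega$, it maps the unique $\SF$-separator separating $x$ from $\omega$ to the unique $\SF$-separator separating $x^\alpha=x'$ from $\omega$; by the uniqueness clause of Lemma~\ref{UniqueDetOfBlocks} these separators are exactly those determining $X$ and $X'$. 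Since $\alpha$ is an automorphism of $G$ preserving $\SF$ (as $\SF$ is $\Gamma_\omega$-invariant), it carries the block determined by $x$ onto the block determined by $x'$, that is $X^\alpha=X'$. As $X$ and $X'$ were arbitrary, $\Gamma_\omega$ acts transitively on the $\SF$-blocks.

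The main obstacle I anticipate is making precise the interaction between $\alpha$, the end $\omega$, and the notion of a vertex ``determining'' a block. One must check that an automorphism in $\Gamma_\omega$ genuinely preserves the ``separates from $\omega$'' relation, which relies on $\alpha$ fixing $\omega$ and on $\SF$ being $\Gamma_\omega$-invariant so that separators map to separators and blocks to blocks. The uniqueness assertions of Lemma~\ref{UniqueDetOfBlocks} are what prevent any ambiguity: once the determining vertex is pinned down by $\alpha$, the block is forced. I would keep the argument short by citing Lemma~\ref{UniqueDetOfBlocks} for both the existence of determining vertices and the uniqueness of the separating separator, and citing Lemma~\ref{endToEnd} only if needed to ensure that $\omega$ corresponds to an end (so that ``the separator separating $x$ from $\omega$'' is well-defined for every vertex $x$, with the block lying on the far side from $\omega$).
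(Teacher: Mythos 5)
Your proof is correct and follows essentially the same route as the paper's: both pick determining vertices for the two blocks via Lemma~\ref{UniqueDetOfBlocks}, move one to the other by an element of $\Gamma_\omega$ (using vertex-transitivity of $\Gamma_\omega$), and conclude that the blocks are mapped onto each other by uniqueness of the determination. Your additional remarks on why $\alpha$ preserves the ``separates from $\omega$'' relation just make explicit what the paper leaves implicit.
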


\begin{proof}
Let $X$ and $Y$ be $\SF$-blocks.
By Lemma~\ref{UniqueDetOfBlocks} there are vertices $x\in X$ and $y\in Y$ such that $x$ and $\omega$ determine $X$ and $y$ and $\omega$ determine $Y$.
Let $\alpha\in\Gamma_\omega$ with $x^\alpha=y$.
Then $X^\alpha=Y$.
\end{proof}

Let us define the tree order on the vertices of $\TF$ with respect to the end $\omega_\TF$ of~$\TF$ that corresponds to $\omega$:
$x\leq y$ if and only if $x$ separates $y$ from $\omega_\TF$.
Let $X,Y$ be the blocks or separators corresponding to $x$, $y$, respectively.
Then $x\leq y$ if and only if $X$ separates $Y$ from $\omega$ in~$G$.
As for rooted trees let $\lfloor x\rfloor$ denote all vertices $y$ in $\TF$ with $y\geq x$.

\begin{Lem}\label{EndOfTreeGlobalEnd}
Any global end of~$G$ corresponds to an end of~$\TF$ and vice versa.
\end{Lem}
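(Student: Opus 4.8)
The plan is to prove the two implications separately, using that every end of $G$ corresponds either to a block of the structure tree $\TF$ or to an end of $\TF$.

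For the forward implication (global $\Rightarrow$ end of $\TF$) I will argue by contraposition. If an end $\eta$ corresponds to a block $X$ of~$\TF$, then every ray in~$\eta$ meets $X$ in infinitely many vertices. By Lemma~\ref{DiamBound} the block $X$ has bounded diameter, so this infinite set of vertices has finite diameter; hence no ray in~$\eta$ is a metric ray and $\eta$ is a local, in particular non-global, end. Consequently a global end cannot correspond to a block and must correspond to an end of~$\TF$.

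For the converse (end of~$\TF$ $\Rightarrow$ global) let $\eta\ne\omega$ correspond to an end $\mu$ of~$\TF$; the case $\eta=\omega$ is immediate since $\omega$ was already observed to be a global end. Fix a ray $X_0S_0X_1S_1\cdots$ in~$\TF$ converging to~$\mu$, with separators $S_0<S_1<\cdots$ in the tree order, and let $C_i$ be the component of $G-S_i$ containing~$\mu$, so that $C_0\supsetneq C_1\supsetneq\cdots$ with empty intersection. I want to show every ray of~$\eta$ is metric, so suppose some $R\in\eta$ is not metric and fix an infinite subset $R'\sub R$ with $\diam(R')\le D$; choosing $u_0\in R'$ gives $R'\sub B_D(u_0)$. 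Since $R$ corresponds to~$\mu$, all but finitely many vertices of~$R$, and hence infinitely many vertices of~$R'$, lie in each $C_i$, while $u_0\notin C_i$ for all large~$i$. As any $u_0$--$C_i$ path meets $S_i$, this forces $d(u_0,S_i)\le D$, so infinitely many separators $S_i$ lie in the ball $B_{D+s}(u_0)$, where $s$ bounds their diameter.

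To reach a contradiction I will produce an independent geodetic ray in~$\eta$. By Lemma~\ref{UniqueDetOfBlocks} together with the transitivity of~$\Gamma_\omega$ on~$VG$, the group $\Gamma_\omega$ acts transitively on the $\SF$-separators, and by Lemma~\ref{BlocksIso} also on the $\SF$-blocks; thus $\Gamma_\omega$ is transitive on each class of the bipartition of~$\TF$. Applying Lemma~\ref{TranslExists} to a length-$4$ subpath of the $\mu$-ray between two separators yields a $g\in\Gamma_\omega$ acting as a translation on~$\TF$ that shifts $S_0$ along the $\mu$-ray; since $g$ fixes the end $\omega_\TF$, its axis is exactly the double ray from~$\omega_\TF$ to~$\mu$. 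Taking the separation of order $\abs{S_0}$ determined by~$S_0$ and noting that $g$ maps the $\mu$-side strictly into itself, Lemma~\ref{GeodDoubleRays} provides a geodetic double ray fixed by a power of~$g$ whose $\mu$-end is a geodetic ray $Z\in\eta$. Now $Z$ also corresponds to~$\mu$, so for each of the infinitely many separators $S_i\sub B_{D+s}(u_0)$ the ray $Z$ must cross~$S_i$, placing a vertex of~$Z$ in $B_{D+s}(u_0)$; these crossing vertices occur at unbounded positions along~$Z$ because the $C_i$ strictly decrease. Hence $Z$ has infinitely many vertices in a fixed ball, contradicting the fact that a geodetic ray leaves every ball permanently. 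Therefore every ray of~$\eta$ is a metric ray, i.e.\ $\eta$ is global. The main obstacle is that $G$ need not be locally finite, so a ball may contain infinitely many separators and a naive counting argument is unavailable; the essential device is to manufacture a genuinely geodetic ray inside~$\eta$ and to play its metric rigidity off against the non-metric ray, which traps the separators along~$\mu$ in a bounded region.
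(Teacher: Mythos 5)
Your forward direction is correct and is exactly the paper's: an end corresponding to a block $X$ of~$\TF$ has all of its rays meeting $X$ infinitely often, and since $\diam(X)$ is bounded by Lemma~\ref{DiamBound}, none of these rays is metric, so the end is not global. The converse direction, however, has a genuine gap at the point where you manufacture a geodetic ray inside~$\eta$. From Lemma~\ref{TranslExists} you obtain $g\in\Gamma_\omega$ acting on~$\TF$ as a translation with $S_0^g\in\{S_1,S_2\}$, and you assert that ``since $g$ fixes $\omega_\TF$, its axis is exactly the double ray from $\omega_\TF$ to $\mu$''. Only half of this is justified: $\omega_\TF$ is indeed one end of the axis (a tree translation fixes exactly the two ends of its axis), but the other end is $\lim_n S_0^{g^n}$, which is determined by~$g$ and not by your chosen ray to~$\mu$. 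Knowing $S_0^g=S_1$ or $S_2$ pins the axis down only between $\omega_\TF$ and $S_0^g$; beyond that the axis consists of the $g$-translates of that segment and will in general branch off the $\mu$-ray (indeed, any translation in $\Gamma_\omega$ whose axis passes through $S_0,S_1,S_2$ ``shifts $S_0$ along the $\mu$-ray'' for \emph{every} end $\mu$ whose ray agrees with the axis down to $S_2$, and these ends cannot all be the second axis end). Consequently the geodetic double ray supplied by Lemma~\ref{GeodDoubleRays} has its non-$\omega$ end corresponding to $\lim_n S_0^{g^n}$ rather than to~$\mu$, so it crosses only finitely many of the trapped separators $S_i$, and your final contradiction --- infinitely many vertices of a geodetic ray in a fixed ball --- evaporates. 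Nothing in the setup lets you choose $g$ so that its axis converges to a prescribed end of~$\TF$.

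The first half of your converse argument, trapping all but finitely many of the separators $S_i$ in a bounded ball around the bounded infinite piece of the non-metric ray, is sound, but as you yourself observe it is not yet a contradiction when $G$ is not locally finite, and the geodetic-ray device does not close it. The paper finishes in a different, combinatorial way: it first extracts a single vertex $x$ lying in infinitely many of the $S_i$ (via domination of the end), takes $S$ minimal in the tree order containing such a vertex, and then uses the transitivity of $\Gamma_\omega$ on $VG$ and on the separators to propagate this --- every vertex below $S$ lies in the intersection of the separators along some ray of~$\TF$, and each such intersection meets $S$ --- concluding with a counting argument that separators sufficiently far below $S$ would have to contain more than $\abs{S}$ vertices, contradicting the fixed order of the cuts. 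Some argument of this kind (or a genuinely different construction of a metric or geodetic ray converging to $\mu$ itself) is needed to replace your final step. A further, more easily repaired, point: even granting a geodetic ray $Z\in\eta$, the claim that its crossing vertices with the $S_i$ occur at unbounded positions needs a line of justification, since a priori a single vertex of~$Z$ could lie in infinitely many of the $S_i$; this can be fixed using $\bigcap_i C_i=\es$.
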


\begin{proof}
By Lemma~\ref{DiamBound} any global end $\omega'$ of $G$ corresponds to an end of $\TF$.
Suppose that there is some end of $\TF$ whose corresponding end $\omega'$ in $G$ is not a global end.
Then the end $\omega'$ must be dominated by some vertex.
Let $S_1,S_2,\ldots$ be a sequence of separators such that $S_i$ separates $S_{i-1}$ from $S_{i+1}$, $S_1$ separates $S_2$ from $\omega$, and every $S_i$ separates $\omega'$ from $\omega$.
Then there is no infinite pairwise disjoint subsequence of the $S_i$ and hence there is some vertex $x$ contained in infinitely many $S_i$.
We may assume that $x\in S_i$ for all $i\in\nat$ and that $S_1$ is an $\SF$-separator that contains $x$ and that is minimal in the tree order with this property.
Let $S:=S_1$.
Since $\Gamma_\omega$ acts transitively on $\SF$, the stabilizer of~$S$ in $\Gamma_\omega$ acts transitively on the vertices in $\lfloor S\rfloor$ with equal distance to $S$.
There is a ray in $\lfloor S\rfloor$ such that every vertex contains $x$.
Thus every vertex in $\lfloor S\rfloor$ contains an element of $S$.
Hence for every ray in $\lfloor S\rfloor$ that starts in~$S$ the intersection of all vertices on that ray is not empty and thus contains an element of~$S$.
As $\Gamma_\omega$ acts transitively on $VG$, every vertex $y$ that is separated from $\omega$ by~$S$ lies in the intersection of the vertices of a ray of~$\TF$.
Hence all $\SF$-separators $S'\in\lfloor S\rfloor$ with $d(S,S')\ge 2i$ have to contain a vertex from the finite set
$$\bigcup\{\overline{S}\in\lfloor S\rfloor\mid d(S,\overline{S})<2i\}\sm \bigcup\{\overline{S}\in\lfloor S\rfloor\mid d(S,\overline{S})<2(i-1)\}.$$
But then all $\SF$-separators $S'$ with $d(S,S')\ge 2\abs{S}$ have to contain more than $\abs{S}$ vertices which is impossible.
\end{proof}

Let us now prove Theorem~\ref{mainTmPart2OfM}.

\begin{proof}[Proof of Theorem~\ref{mainTmPart2OfM}]
Let $\SF$ be a basic cut system and let $\TF$ be the structure tree of~$G$ and $\SF$.
By the Lemmas~\ref{endToEnd} and \ref{EndOfTreeGlobalEnd} all non-local ends are global ends and also $|S|$-thin ends for all $\SF$-separators $S$.
Thus it suffices to prove the denseness condition for $G$.
We will first prove this condition for $\TF$.
Let us identify the unique end in $\TF$ that corresponds to the end $\omega$ of~$G$ with $\omega$.
Let $\widetilde{\omega},\widehat{\omega}$ be any two distinct ends of $\TF$ that are both different to $\omega$.
We have to show that in any open neighbourhood around $\widetilde{\omega}$ there is some $\Gamma_\omega$-image of $\widehat{\omega}$.
It suffices to show this for any neighbourhood of the form $\lfloor t\rfloor$ for some $t\in V\TF$.

Let $x$ be a vertex on the unique double ray in $\TF$ between $\widetilde{\omega}$ and $\widehat{\omega}$ such that $x$ is minimal in the tree order.
As $\Gamma_\omega$ acts on $V\TF$ with precisely two orbits, there is an automorphism $g\in\Gamma_\omega$ such that either $x^g=t$ or $d(x^g,t)=1$ and $x^g\in\lfloor t\rfloor$.
Since $g$ fixes $\omega$, the end $\widehat{\omega}^g$ has to lie in $\lfloor t\rfloor$.
As any open neighbourhood around $\omega$ contains an end of~$\TF$ that is different to $\omega$, any neighbourhood of~$\omega$ contains some $\widehat{\omega}^g$ with $g\in\Gamma_\omega$.
Thus $\widehat{\omega}^{\Gamma_\omega}$ is dense in the set of non-local ends.

Let $\widetilde{\omega}$ be a local end of~$G$.
As for every $\SF$-separators $S$ the group $\Gamma_\omega$ acts transitively on those $\SF$-separators that have distance $2$ in~$\TF$ to~$S$ and that are separated from $\omega$ by~$S$, each ray $R$ in $\widetilde{\omega}$ meets at least infinitely many of those $\SF$-separators.
There is a block vertex $X\in V\TF$ such that $\widetilde{\omega}$ corresponds to $X$.
Then $X$ must have infinitely many neighbours in~$\TF$ and also in~$G$.
As $\widehat{\omega}^{\Gamma_\omega}$ is dense in $\Omega\TF$, for each finite vertex set $S$ of~$G$ there is an end $\widehat{\omega}^g$ with $g\in\Gamma$ that is not separated from $X$ by $S$ and thus that is also not separated from $\widehat{\omega}$.

Now let $\widehat{\omega}$ be a local end of~$G$.
By Lemma~\ref{EndOfTreeGlobalEnd} there is an $\SF$-block $X$ such that $\widehat{\omega}$ corresponds to~$X$.
Let first $\widetilde{\omega}$ be a global end of~$G$.
Then for each $\SF$-separator $S$ there is an $X^g$ with $g\in\Gamma_\omega$ in the same component of $G-S$ in which $\widetilde{\omega}$ lies.
So let $\widetilde{\omega}$ be a local end of~$G$.
Analog to~$\widehat{\omega}$, there is an $\SF$-block $Y$ such that $\widetilde{\omega}$ corresponds to the $\TF$-vertex $Y$.
Since $\Gamma_\omega$ acts transitively on the $\SF$-blocks, every finite vertex set can separate $Y$ only from finitely many $X^g$ with $g\in\Gamma_\omega$ and thus $\widetilde{\omega}$ lies in the closure of~$\widehat{\omega}$.
\end{proof}

The following theorem, Theorem~\ref{mainTmPart2OfMbNearly}, immediately implies Theorem~\ref{mainTmPart2OfMb}.

\begin{Tm}\label{mainTmPart2OfMbNearly}
Let $G$ be a connected graph with infinitely many ends such that for some end $\omega$ of~$G$ the stabilizer of~$\omega$ in the automorphism group $\Gamma$ of~$G$ acts transitively on the vertices of $G$.
Then $G$ is quasi-isometric to any basic structure tree of~$G$ which is a semi-regular tree with minimum degree $2$.
\end{Tm}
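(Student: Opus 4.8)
The plan is to establish the two assertions in turn: that every basic structure tree $\TF$ is a semi-regular tree of minimum degree $2$, and that the map sending each vertex to the separator it determines is a quasi-isometry $G\to\TF$. The first assertion is the quick one. Since $\SF$ is $\Gamma_\omega$-invariant, $\Gamma_\omega$ acts on $\TF$, and the vertex set of $\TF$ is the disjoint union $\WF\cup\BF$ of the $\SF$-separators and the $\SF$-blocks, which is exactly the natural bipartition of the bipartite tree $\TF$. By hypothesis $\Gamma_\omega$ is transitive on the $\SF$-separators, and by Lemma~\ref{BlocksIso} it is transitive on the $\SF$-blocks; hence each side of the bipartition is a single $\Gamma_\omega$-orbit, so all separator vertices share one degree and all block vertices share one degree. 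This is precisely semi-regularity. For the degree bound, Lemma~\ref{endToEnd} gives that $\omega$ corresponds to an end of~$\TF$, so $\TF$ contains a ray; every interior vertex of a ray has degree at least~$2$, and as each bipartition class is one orbit this forces every vertex of~$\TF$ to have degree at least~$2$. Thus $\TF$ is a semi-regular tree of minimum degree~$2$, and since $G$ has infinitely many ends each of which corresponds to an end or a block of~$\TF$ (Lemma~\ref{EndOfTreeGlobalEnd}), the tree $\TF$ is infinite.

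For the quasi-isometry I would use the map $\psi\colon VG\to V\TF$ defined by letting $\psi(x)$ be the unique $\SF$-separator $S_x$ that separates $x$ from~$\omega$ with $x\notin S_x$, which is well defined by Lemma~\ref{UniqueDetOfBlocks}. By Lemma~\ref{DiamBound} all $\SF$-blocks have diameter at most some $\Delta<\infty$ and each vertex has distance at most~$1$ from its determining separator. The upper estimate $d_G(x,y)\le C\,d_\TF(\psi(x),\psi(y))+D$ is then routine: writing the $\TF$-path from $S_x$ to $S_y$ as an alternating sequence of separators and blocks, one concatenates short detours (each of length at most~$\Delta$) through the blocks on the path together with the two end steps of length at most~$1$, producing an $x$--$y$ path in~$G$ of length at most $\tfrac{\Delta}{2}\,d_\TF(\psi(x),\psi(y))+2$. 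Quasi-surjectivity is immediate as well: every block contains a vertex determining it, and for a vertex $v$ of a separator $S$ the image $\psi(v)$ lies within $\TF$-distance~$2$ of~$S$, so $\psi(VG)$ is $2$-dense in~$\TF$.

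The main obstacle is the lower estimate $d_G(x,y)\ge\tfrac1C\,d_\TF(\psi(x),\psi(y))-D$, i.e.\ that tree-far vertices are graph-far. For this I would observe that every separator $S_i$ lying strictly between $S_x$ and $S_y$ on the $\TF$-path separates $x$ from $y$ in~$G$, so every $x$--$y$ path in~$G$ meets each such $S_i$; there are at least $\tfrac12 d_\TF(\psi(x),\psi(y))-1$ of them. The delicate point is to bound the multiplicity with which a single vertex can account for these $S_i$, and here I would reuse the estimate extracted from the proof of Lemma~\ref{EndOfTreeGlobalEnd}: if a vertex $w$ lies in two separators $S,S'$ then $d_\TF(S,S')<2\abs{S}$, so the separators on a fixed $\TF$-path through $w$ occupy a subpath of bounded length and hence number fewer than $\abs{S}$. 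Consequently any $x$--$y$ path in~$G$ uses at least $\bigl(\tfrac12 d_\TF(\psi(x),\psi(y))-1\bigr)/\abs{S}$ distinct vertices, which yields the required lower bound. Combining the three estimates shows that $\psi$ is a quasi-isometry, so $G$ is quasi-isometric to the infinite semi-regular minimum-degree-$2$ tree~$\TF$, as claimed.
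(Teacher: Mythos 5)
Your overall architecture coincides with the paper's (whose own proof is only two lines): semi-regularity and minimum degree~$2$ come from Lemma~\ref{BlocksIso} together with the transitivity of $\Gamma_\omega$ on the $\SF$-separators and Lemma~\ref{endToEnd}, and the metric comparison rests on the bounded diameters of Lemma~\ref{DiamBound}. Your upper estimate and quasi-surjectivity are fine, and you are right to isolate the lower estimate as the delicate point --- it is precisely the point that the paper's two-line proof passes over in silence.

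The gap is in your justification of the multiplicity bound. The convexity statement is correct: if $w\in S\cap S'$ and $S''$ lies on the $\TF$-path between them, then the two blocks flanking $S''$ on that path lie in distinct wings $A$, $B$ of the corresponding separation, so $w\in A\cap B=S''$; hence the separators through $w$ on a fixed path are consecutive. But the quantitative claim that $w\in S\cap S'$ forces $d_\TF(S,S')<2\abs{S}$ cannot be extracted from the proof of Lemma~\ref{EndOfTreeGlobalEnd}. That proof is a reductio: the annulus-counting estimate you want to reuse is derived only after assuming that some end of~$\TF$ corresponds to a dominated end of~$G$, which produces a ray of~$\TF$ all of whose separators contain a common vertex and thence (by homogeneity) the intermediate claim that every tree-vertex below $S$ meets $S$; without that hypothesis the estimate is not available. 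What the lemma's conclusion gives you unconditionally is only that the subtree of separators containing a fixed vertex is rayless (otherwise that vertex would dominate the global end determined by the ray), and a rayless subtree can still contain arbitrarily long paths, so the number of separators one vertex contributes to a fixed $\TF$-geodesic is not yet bounded. To close this you need a separate argument --- for instance, assume some vertex lies in $k$ consecutive separators, use the transitivity of $\Gamma_\omega$ on $VG$ and on $\SF$ to propagate this to every separator at every level below a fixed $S$, and then run the disjoint-annulus count of Lemma~\ref{EndOfTreeGlobalEnd} to contradict $\abs{S}<\infty$ for $k$ large; alternatively adapt the translation argument of Lemma~\ref{lowerBound}. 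Until such a bound is in place, the lower estimate, and with it the quasi-isometry, is not established.
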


\begin{proof}
Let $\SF$ be a basic cut system and let $\TF$ be the structure tree of~$G$ and $\SF$.
By Lemma~\ref{BlocksIso} the tree $\TF$ is a semi-regular tree and by Lemma~\ref{DiamBound} all vertices of~$\TF$ have bounded diameter in~$G$ and thus the claim holds.
\end{proof}

A direct consequence is the following corollary.

\begin{Cor}
Let $G$ be a graph with infinitely many ends such that the stabilizer of an end acts transitively on the vertices.
Then any non-local end of~$G$ is a thin global end of~$G$.\qed
\end{Cor}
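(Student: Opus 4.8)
The plan is to read both properties off the structure tree $\TF$ and the lemmas already established in this section; indeed the statement was essentially recorded in passing during the proof of Theorem~\ref{mainTmPart2OfM}. Keep $\SF$ and $\TF$ as fixed throughout the section, recall that all $\SF$-separators have the same order, which we denote $|S|$, and let $\omega'$ be an arbitrary non-local end of $G$.

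First I would locate $\omega'$ inside $\TF$. Being non-local, $\omega'$ contains a metric ray $R$. By Lemma~\ref{DiamBound} every $\SF$-block has bounded diameter, so $R\cap X$ is finite for each block $X$; otherwise $R$ would carry an infinite subset of bounded diameter, contradicting that $R$ is metric. Hence $R$ cannot correspond to a block vertex and must correspond to an end $e$ of $\TF$. Since all rays in a common end of $G$ correspond to the same object of $\TF$, every ray of $\omega'$ corresponds to $e$.

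Globality then comes from Lemma~\ref{EndOfTreeGlobalEnd}: the end of $G$ attached to the tree end $e$ is global, and because all rays of $\omega'$ correspond to $e$ this end is exactly $\omega'$; thus every ray of $\omega'$ is metric, i.e.\ $\omega'$ is global. For thinness I would fix a tree ray $X_0S_0X_1S_1\dots$ representing $e$, with separators $S_0,S_1,\dots$ each of order $|S|$. Any ray of $\omega'$ corresponds to $e$, so it eventually lies in the component of $G-S_i$ on the $e$-side of $S_i$ for every $i$; being connected, it therefore meets each $S_i$. Given pairwise disjoint rays $R_1,\dots,R_k$ in $\omega'$ and an index $i$ past all their exceptional parts, their crossings of $S_i$ use pairwise distinct vertices, whence $k\le|S|$. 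So $\omega'$ is $|S|$-thin, completing the proof.

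The one point that needs genuine care — and the step I expect to be the main obstacle — is the identification used for globality: that no $G$-end other than $\omega'$ is attached to $e$, equivalently that \emph{every} ray corresponding to $e$ is metric, not merely that some ray is. This is precisely the content of Lemma~\ref{EndOfTreeGlobalEnd}, whose proof excludes a dominated (non-global) end at $e$ by showing that the $\SF$-separators would otherwise be forced to contain more than $|S|$ vertices, which is impossible. I would therefore invoke that lemma directly rather than re-prove that the separators along $e$ escape every finite set; with that in hand the remaining crossing count for thinness is routine.
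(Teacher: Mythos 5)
Your proof is correct and follows the paper's own route: the paper states this corollary without proof as a direct consequence of the surrounding results, and in the proof of Theorem~\ref{mainTmPart2OfM} it justifies exactly this statement by citing Lemmas~\ref{endToEnd} and~\ref{EndOfTreeGlobalEnd}, just as you do (with Lemma~\ref{DiamBound} ruling out correspondence to a block vertex and the separator-crossing count giving $|S|$-thinness). You merely make explicit the counting argument the paper leaves implicit, so nothing further is needed.
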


\end{document}